%
%

\documentclass[12pt,longbibliography]{article}


\usepackage[utf8]{inputenc}

\usepackage[T1]{fontenc}

\usepackage[a4paper, margin=2.7cm]{geometry}

\usepackage{amsmath, amsthm, amsfonts, amssymb}
\usepackage{mathrsfs}   
\usepackage{textcomp}        

\usepackage[hyphens]{url} \urlstyle{same}
\usepackage[all]{xy}

\usepackage{amscd}
\usepackage[colorlinks=true,linkcolor=blue,citecolor=blue,urlcolor=blue,breaklinks]{hyperref}



\providecommand{\customgenericname}{}
\newcommand{\newcustomtheorem}[2]{%
	\newenvironment{#1}[1]
	{%
		\renewcommand\customgenericname{#2}%
		\renewcommand\theinnercustomgeneric{##1}%
		\innercustomgeneric
	}
	{\endinnercustomgeneric}
}

\newcustomtheorem{customthm}{Theorem}












\theoremstyle{plain}
\newtheorem{theorem}{Theorem}[section]
\newtheorem{lem}[theorem]{Lemma}
\newtheorem{cor}[theorem]{Corollary}
\newtheorem{prop}[theorem]{Proposition}
\newtheorem{remark}[theorem]{Remark}

\newtheorem*{question*}{Question}
\theoremstyle{definition}
\newtheorem{defn}[theorem]{Definition}

\theoremstyle{remark}

\newcommand{\Q}{\mathbb{Q}}
\newcommand{\Z}{\mathbb{Z}}
\newcommand{\F}{\mathbb{F}}



\hypersetup{pdftitle={Title of the PDF}}
\hypersetup{pdfauthor={Author of the PDF}}

\title{Profinite genus of fundamental groups of compact flat manifolds with the cyclic holonomy group of square-free order}
\author{Genildo de Jesus Nery\thanks{The author held CNPq scholarship during the preparation of this article.}}



\AtEndDocument{\bigskip{\footnotesize
		\noindent
		\textsc{Genildo de Jesus Nery. University of Brasilia, Department of Mathematics, Brasilia DF, Brazil.}
		\textit{E-mail address}: \texttt{\href{mailto:g.j.nery@mat.unb.br}{g.j.nery@mat.unb.br}}
}}


\begin{document}
	
	\maketitle
	
	\begin{abstract}
		In this article we study the extent to which an $n$-dimensional compact flat manifold with the cyclic holonomy group of square-free order may be distinguished  by the finite quotients of its fundamental group. In particular, we display a formula for the cardinality of profinite genus of the fundamental group  of an $n$-dimensional compact flat manifold with the cyclic holonomy group of square-free order. \\
		\
		\\
		\noindent{\bf Keywords:} Profinite genus; Bieberbach group; compact flat manifold. \\
		\
		\\
		\noindent{\bf Mathematics Subject Classification (2010):} 20E18; 20E26; 11R29; 58D17
	\end{abstract}
	
	
	\section{Introduction}
	\label{sec:intro}

	The study of what structural properties of a manifold can be detected by  the set of finite quotients of the fundamental group of a manifold attracted a lot of attention   in 21-st centure (see \cite{BCR16, BMRS18, BMRS20, Wil17, Wil19, WZ-10, WZ-16}).
	
	The $n$-dimensional compact flat manifolds were well described by Bieberbach who characterized such manifolds as isometrically covered by a flat torus such that their fundamental groups $\Gamma$ are torsion-free  with a maximal abelian normal subgroup $M$ (subgroup of all translations) of finite index (see \cite{Cha86}). Consequently, the group $\Gamma$ is called an $n$-dimensional Bieberbach group; the quotient $G=\Gamma/M$, called the holonomy group, is a finite group acting faithfully on $M$. Note that $\Gamma_d$, the full inverse image of a subgroup $H$ of $G$ of order $d$ under the quotient map $\Gamma\rightarrow \Gamma/M$, is an $n$-dimensional Bieberbach group with maximal abelian normal subgroup $M_d$ and  holonomy group $H$.

	In this paper we investigate the extent to which a Bieberbach group may be distinguished from each other by its set of finite quotient groups. Since there is no complete classification of Bieberbach groups in all dimensions, it becomes more difficult to investigate this problem in the general case. Thus, it makes sense to consider this problem for some families. For example, in the previous study \cite{Nery} we give a complete answer to this problem in the case that the holonomy group is cyclic of prime order. In this paper we consider the Bieberbach groups $\Gamma$ with cyclic  holonomy group $G=C_{p_1}\times \cdots \times C_{p_k}$, where $C_{p_i}$ is a cyclic group of prime order $p_i$. Thus, this paper can be considered as a natural continuation of the previous work.

	Following \cite{GZ11}, we define the genus $\mathfrak{g}(\Gamma)$ as the set of isomorphism classes of finitely generated residually finite groups with the profinite completion isomorphic to the profinite completion $\widehat \Gamma$ of $\Gamma$.  In this paper we find a formula for the genus of an $n$-dimensional Bieberbach group with the cyclic holonomy group of square-free order. 
	
	Fix a faithful $\Z G$-lattice $M$ for a finite group $G$.  We define the crystal class $(G,M)$ as the set of all the free-torsion extensions $\Gamma$ of $G$ by $M$. We say that two crystal classes $(G,M)$ and $(G',M')$ are arithmetically equivalent if $G$ and $G'$ are conjugate subgroups of $\mathrm{GL}(n,\Z)$. The resulting equivalence classes are the arithmetic crystal classes. Similar to the definition of  $\mathfrak{g}(\Gamma)$, let $\mathcal{C}( M)$ denote the set of isomorphism classes of $\Z G$-lattice $N$ such that the $\widehat{\Z} G$-modules $\widehat{N}$ and $\widehat{M}$ are isomorphic.
	
	Note that to calculate the cardinality $|\mathfrak{g}(\Gamma)|$ of the genus we have to calculate the number of isomorphism classes of Bieberbach groups in the crystal class $(G,M)$ and $|\mathcal{C}(M)|$. We first calculate $|\mathcal{C}(M)|$. For this purpose, we will divide our study into two cases: special and not special, according to the following
	
	\begin{defn}\label{exceptional case} 
		Let $\Gamma$ be an $n$-dimensional Bieberbach group with the cyclic holonomy group $G$ of square-free order. Let $D$ be the set of prime divisors of $|G|$ such that for each $p\in D$ the $n$-dimensional Bieberbach subgroups $\Gamma_p$ of $\Gamma$ with the cyclic holonomy group $C_p$ of order $p$, have corresponding $\Z C_p$-modules $M_p$ with all indecomposable summands of  $\Z$-rank $p-1$ except one trivial summand of $\Z$-rank $1$.  We sasy that $\Gamma$ is {\it special}, if $D\neq\emptyset$.
	\end{defn}
	
	Let $|G|=\delta=p_{1}p_{2}\cdots p_{k}$ be the decomposition of  $\delta$ into distinct primes. Denote by $\Q(\zeta_\delta)$ the cyclotomic field generated by a primitive $\delta$-th root of unity $\zeta_\delta$, by   $H(\Q(\zeta_\delta))$ its class group and by $\mathrm{Gal}(\zeta_\delta)$ its Galois group, the latter acts naturally on $H(\Q(\zeta_\delta))$. Using that
	
	\begin{equation}\label{characterization}
		\mathrm{Gal}(\zeta_\delta)\cong \mathrm{Gal}(\zeta_{p_{1}})\times\cdots \times \mathrm{Gal}(\zeta_{p_{k}})
	\end{equation} 
	(see \cite[Chapter 14, Corollary 27]{DF04}) and that  $\mathrm{Gal}(\zeta_{p_{i}})\cong C_{p_i-1}$ contains the unique subgroup $C_2$ or order 2,  for a set $D$  of prime divisors of $\delta$ we can define   a subgroup $\mathcal{H}_{D}$ of $\mathrm{Gal}(\zeta_\delta)$  that has $C_2$ instead of $\mathrm{Gal}(\zeta_{p})$ as a factor in the direct product in $\mathrm{Gal}(\zeta_{p_{1}})\times\cdots \times \mathrm{Gal}(\zeta_{p_{k}})$ for each $p_i\in D$.
	
	With these notations,  we obtain the following formula for $|\mathcal{C}(M)|$:
	
	\begin{theorem}\label{cardinality of the crystal class}
		Let $\Gamma$ be a  $n$-dimensional Bieberbach group with maximal abelian normal subgroup $M$  and cyclic holonomy group $G$ of square-free order $\delta$.  If $\Gamma$ is special, then  
		
		$$|\mathcal{C}(M)|=\left|\mathcal{H}_{D} \backslash \prod_{d\mid \delta}H(\Q(\zeta_d))\right|.$$  Otherwise, 
		$$|\mathcal{C}(M)|= \left|\mathrm{Gal}(\zeta_\delta)\backslash \prod_{d\mid \delta}H(\Q(\zeta_d))\right|. $$
	\end{theorem}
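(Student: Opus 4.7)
The plan is to reduce $|\mathcal{C}(M)|$ to an orbit-counting problem on a product of ideal class groups. I will combine the Diederichsen--Reiner classification of integral representations of cyclic groups of square-free order with Jacobinski's description of the genus, and then quotient by the $\mathrm{Aut}(G)$-action built into the crystal-class point of view.

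First I would decompose $M$ into its indecomposable $\Z G$-summands. Using the rational decomposition $\Q G\cong\prod_{d\mid\delta}\Q(\zeta_d)$ and the fact that $\Z G$ is obtained from $\prod_{d\mid\delta}\Z[\zeta_d]$ by a chain of Milnor pull-backs (one for each prime $p\mid\delta$), every $\Z G$-lattice $N$ is reconstructed from $\Z[\zeta_d]$-projective components $N^{(d)}$ together with patching data over residue fields. By a standard local--global / Jacobinski argument, $\widehat N\cong\widehat M$ as $\widehat{\Z}G$-modules iff the $\Z[\zeta_d]$-ranks of $N^{(d)}$ and $M^{(d)}$ agree for every $d\mid\delta$; Steinitz's theorem for the Dedekind ring $\Z[\zeta_d]$ then identifies each $\Z G$-isomorphism class (with $G$ fixed) inside the genus with an element of $H(\Q(\zeta_d))$. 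Taking products over $d\mid\delta$ yields a bijection between such isomorphism classes and $\prod_{d\mid\delta}H(\Q(\zeta_d))$.

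Second, the set $\mathcal{C}(M)$ must be understood up to the $\mathrm{Aut}(G)$-action underlying the crystal-class identification of $(G,M)$ with any arithmetically equivalent pair, i.e.\ up to relabeling the generator of $G$. Since $G\cong C_\delta$, we have $\mathrm{Aut}(G)\cong(\Z/\delta)^\times\cong\mathrm{Gal}(\zeta_\delta)$, and an automorphism $g\mapsto g^a$ acts on each summand $\Z[\zeta_d]$ by $\zeta_d\mapsto\zeta_d^a$, hence on each $H(\Q(\zeta_d))$ by the corresponding Galois element. Through the factorization~(\ref{characterization}), this becomes precisely the diagonal action of $\mathrm{Gal}(\zeta_\delta)$ on $\prod_{d\mid\delta}H(\Q(\zeta_d))$, and counting orbits gives the non-special formula.

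Finally, in the special case, for each $p\in D$ the module $M_p$ consists of one isolated trivial $\Z$-summand together with ideal summands of $\Z[\zeta_p]$ (no mixed extension-type summand). In the Milnor pull-back square for $\Z C_p$, this trivial summand is glued to the $\Z[\zeta_p]$-summands through the residue map $\Z[\zeta_p]\to\F_p$, and I would show that a Galois twist $\sigma\in\mathrm{Gal}(\zeta_p)$ lifts to a $\Z C_p$-isomorphism of the pair $(C_p,M_p)$ iff it fixes this gluing data on $\F_p$, which forces $\sigma$ to lie in the unique order-two subgroup $C_2\leq\mathrm{Gal}(\zeta_p)$ (complex conjugation). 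Assembling these constraints prime-by-prime via~(\ref{characterization}) replaces each factor $\mathrm{Gal}(\zeta_p)$ by $C_2$ for $p\in D$, yielding the subgroup $\mathcal{H}_D$ and the special-case formula. I expect this last step to be the main obstacle: showing that complex conjugation always lifts is routine, but proving that no element of $\mathrm{Gal}(\zeta_p)$ of order greater than $2$ can lift requires a careful analysis of how units of $\Z[\zeta_p]$ act on the residue field and of the way this action is constrained by the rigidified trivial summand.
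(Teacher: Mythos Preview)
Your overall plan for the non-special case---parametrize the module genus by $\prod_{d\mid\delta}H(\Q(\zeta_d))$ via Oppenheim/Steinitz, then pass to $\mathrm{Gal}(\zeta_\delta)$-orbits---is close in spirit to the paper, though the paper packages this differently: it routes everything through Bieberbach groups, proving first (Proposition~\ref{topological invariants}) that $\widehat\Gamma_1\cong\widehat\Gamma_2$ is equivalent to $\widehat M\cong\widehat N$ as $\widehat\Z G$-modules, then restricting to each prime-order subgroup $\Gamma_{i,p}$, and finally invoking Charlap's classification of Bieberbach groups with prime holonomy (Propositions~\ref{C1-Bieberbach} and~\ref{C2-Bieberbach}) together with the semi-linear criterion of Proposition~\ref{Invariants_Semi-Linear Iso}. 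So where you argue purely at the lattice level, the paper imports the $\mathrm{Gal}(\zeta_p)$- and $C_2$-orbit structure from Charlap as a black box.

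Your treatment of the special case, however, has a genuine gap. You write that for $p\in D$ the module $M_p$ ``consists of one isolated trivial $\Z$-summand together with ideal summands of $\Z[\zeta_p]$'' and that the trivial summand is ``glued to the $\Z[\zeta_p]$-summands through the residue map $\Z[\zeta_p]\to\F_p$''. But the defining feature of the exceptional case (Definition~\ref{exceptional module}) is precisely that there is \emph{no} gluing: $M_p=\bigoplus_i A_i\oplus\Z$ is an honest direct sum, i.e.\ the extension class in $\mathrm{Ext}^1$ vanishes. There is therefore no patching datum for a Galois twist to fix, and your proposed mechanism for cutting $\mathrm{Gal}(\zeta_p)$ down to $C_2$ simply does not exist. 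Indeed, every $\sigma\in\mathrm{Gal}(\zeta_p)$ produces a perfectly good semi-linear isomorphism $M_p\to(M_p)^\sigma$ (apply $\sigma$ on each $A_i$, identity on $\Z$), so the lifting obstruction you anticipate is vacuous.

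The appearance of $C_2$ rather than $\mathrm{Gal}(\zeta_p)$ in the exceptional case is a phenomenon at the level of \emph{Bieberbach groups}, not lattices: it comes from Charlap's Theorem~6.4 (quoted here as Proposition~\ref{C2-Bieberbach}), whose proof tracks how semi-linear automorphisms of $M_p$ interact with the special cohomology class in $H^2(C_p,\Z)\cong\F_p$. The paper does not reprove this; it uses it as input. If you want to avoid Charlap's result and stay at the module level, you would need to replace your gluing argument by an honest analysis of which semi-linear isomorphisms of $M_p$ carry one special extension to another---this is where the subtlety with units of $\Z[\zeta_p]$ and the action on $\F_p^*$ actually enters, not in any Milnor patching.
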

	
	Let $M$ be a $\Z C_p$-module (for a cyclic group $C_p=\langle x\rangle$ of prime order $p$). Then  $M$ can be written in the form $M=M_1\oplus M_2$, where $M_1$ is the largest direct summand of $M$ on which $C_p$ acts trivially.  For an element $n\in \mathcal{N}_{\mathrm{Aut}(M)}(C_p)$ denote by $\tilde n$ the its natural image in $\F_p^*=\F_p\setminus \{0\}$ under an identification $\mathrm{Aut}(C_p)\cong \F_p^*$.  
	
	Now let  "bar" denotes the reduction modulo $p$. 
	The normalizer $\mathcal{N}_{\mathrm{Aut}(M)}(C_p)$ acts in a natural way on the set of all fixed elements $M^{C_p}$ under the action of $C_p$ on $M$.  This induces the action of $\mathcal{N}_{\mathrm{Aut}(M)}(C_p)$  on $\bar{M}^{C_p}$, and hence $\mathcal{N}_{\mathrm{Aut}(M)}(C_p)$ acts on $\bar{M}^{C_p}/\Delta \cdot \bar{M}\cong \bar{M}_1$, where $\Delta=1+x+\cdots+x^{p-1}$ (see \cite[p. 168]{Cha86} for more details).

	Define a new action \textquotedblleft$\bullet$\textquotedblright \ of the normalizer $\mathcal{N}_{\mathrm{Aut}(M)}(C_p)$  on $\bar{M}_1$  by
	\begin{equation}\label{def of the action}
		n\bullet m := n\cdot \widetilde{n}  m, \ \ (m\in \bar{M}_1)
	\end{equation}
	where  \textquotedblleft$\cdot$\textquotedblright \ denotes the  action of $\mathcal{N}_{\mathrm{Aut}(M)}(C_p)$ on $\bar{M}_1$ described in the preceding paragraph.  Since for each prime $p$ dividing $|G|$ the normalizer   $\mathcal{N}_{\mathrm{Aut}(M)}(G)$ is a subgroup of $\mathcal{N}_{\mathrm{Aut}(M)}(C_p)$,  the normalizer  $\mathcal{N}_{\mathrm{Aut}(M)}(G)$ also acts on $M_1/pM_1$, and hence $\mathcal{N}_{\mathrm{Aut}(M)}(G)$ acts on $(M_1/pM_1)^{G/C_p}=(M_1/pM_1)^{G}$ for each prime $p$ dividing $|G|$.
	
	We  denote  $\bar M_1^{\ast}=\bar M_1-\{0\}$ and state the following
	
	\begin{theorem}\label{main Theorem}
		Let $\Gamma$ be an $n$-dimensional Bieberbach group with maximal abelian normal subgroup $M$ and cyclic holonomy group $G$ of square-free order $\delta$. Then,
		\begin{equation*}
			|\mathfrak{g}(\Gamma)|= \sum_{M\in T}\left(\prod_{p\mid \delta}|\mathcal{N}_{\mathrm{Aut}(M)}(G)\backslash (\bar{M}_{1,p}^{\ast})^{G}| \right),
		\end{equation*}
		where $T$ is a set of representatives for the isomorphism classes of $\Z G$-lattices in $\mathcal{C}(M)$ and $M_{1,p}$ is the largest direct summand of $M$ on which $C_p$ acts trivially.
	\end{theorem}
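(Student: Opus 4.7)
The plan is to combine Theorem~\ref{cardinality of the crystal class} with a count of isomorphism classes of Bieberbach groups inside each crystal class. I would first observe that for $\Gamma' \in \mathfrak{g}(\Gamma)$, the profinite completion $\widehat{\Gamma'} \cong \widehat \Gamma$ forces $\Gamma'$ to be an $n$-dimensional Bieberbach group with the same cyclic holonomy group $G$ of order $\delta$, and its translation subgroup $M'$ satisfies $\widehat{M'} \cong \widehat M$ as $\widehat \Z G$-modules, i.e.\ $M' \in \mathcal{C}(M)$. Hence
\begin{equation*}
\mathfrak{g}(\Gamma) \;=\; \bigsqcup_{M' \in T} \bigl\{\text{iso.\ classes of Bieberbach groups in } (G, M') \text{ whose profinite completion is } \widehat\Gamma\bigr\},
\end{equation*}
so it suffices, for each representative $M' \in T$, to compute the inner set.

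For a fixed $\Z G$-lattice $M$, the isomorphism classes of extensions $1 \to M \to \Gamma \to G \to 1$ correspond to orbits of $H^2(G, M)$ under the natural action of $\mathcal{N}_{\mathrm{Aut}(M)}(G) \times \mathrm{Aut}(G)$. A direct computation (as sketched before the statement of the theorem) shows that since $G$ is abelian and the $\mathrm{Aut}(G)$-component of an isomorphism is determined by the conjugation action of the $\mathcal{N}_{\mathrm{Aut}(M)}(G)$-component, this combined action reduces to the $\bullet$-action of $\mathcal{N}_{\mathrm{Aut}(M)}(G)$ defined in \eqref{def of the action}. Next, because $\delta$ is square-free and the primes dividing $|G|$ are pairwise coprime to the orders of the factor groups, the standard inflation--restriction spectral sequence collapses to give a natural isomorphism
\begin{equation*}
H^2(G, M) \;\cong\; \prod_{p \mid \delta} H^2(C_p, M)^{G/C_p} \;\cong\; \prod_{p \mid \delta} (\bar M_{1,p})^{G},
\end{equation*}
using the standard Tate identification $H^2(C_p, M) = M^{C_p}/\Delta M \cong \bar M_{1,p}$. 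Under this isomorphism, the torsion-free (Bieberbach) condition on $\Gamma$ is equivalent to torsion-freeness of each subgroup $\Gamma_p$, which in turn is equivalent to the $p$-th component lying in $(\bar M_{1,p}^{\ast})^G$. Since the $\bullet$-action preserves the product decomposition componentwise, the isomorphism classes of Bieberbach groups in the crystal class $(G, M)$ are parameterized by
\begin{equation*}
\prod_{p \mid \delta} \mathcal{N}_{\mathrm{Aut}(M)}(G) \big\backslash (\bar M_{1,p}^{\ast})^{G}.
\end{equation*}

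The remaining and most delicate step is to show that, for a fixed $M \in T$, every isomorphism class in the crystal class $(G, M)$ already belongs to $\mathfrak{g}(\Gamma)$, so that no further quotient is needed. For this I would pass to completions and use the local-global decomposition
\begin{equation*}
H^2(G, \widehat M) \;\cong\; \prod_{p \mid \delta} H^2(G, M \otimes \Z_p),
\end{equation*}
together with the fact that over $\Z_p$ the normalizer $\mathcal{N}_{\mathrm{Aut}(M \otimes \Z_p)}(G)$ is strictly larger than its global counterpart and acts transitively on the nonzero classes in $(\bar M_{1,p}^{\ast})^G$ (this is where the local theory of $\Z_p G$-modules for $G$ with square-free order is needed, in the spirit of \cite{Cha86}). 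Consequently, any two Bieberbach groups in $(G, M)$ have isomorphic profinite completions. The hard part of the argument is precisely this verification that the local $\mathcal{N}$-orbits collapse while the global ones do not, so that the genus count is exactly the product of global orbit counts. Summing over $M \in T$ then yields the stated formula.
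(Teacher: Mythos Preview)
Your overall strategy---partition $\mathfrak{g}(\Gamma)$ according to the isomorphism class of the translation lattice and then count $\mathcal{N}_{\mathrm{Aut}(M)}(G)$-orbits on the special cohomology classes inside each crystal class---is exactly the paper's. Your cohomological decomposition $H^2(G,M)\cong\prod_{p\mid\delta}(\bar M_{1,p})^{G}$, the identification of the torsion-free locus with $\prod_{p\mid\delta}(\bar M_{1,p}^{\ast})^{G}$, and the reduction of the combined $\mathcal{N}_{\mathrm{Aut}(M)}(G)\times\mathrm{Aut}(G)$-action to the $\bullet$-action are what the paper packages as Proposition~\ref{p-primary component}, Proposition~\ref{Th 2.1, CIII, Cha86}, Lemma~\ref{orbit of N_{GL}(G)}, and Proposition~\ref{Theorem of Mathematical Crystallography}.

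The real divergence is in the step you correctly flag as ``most delicate'': showing that every Bieberbach group in a crystal class $(G,M')$ with $M'\in\mathcal{C}(M)$ already lies in $\mathfrak{g}(\Gamma)$. You propose to prove this by establishing that the local normalizer $\mathcal{N}_{\mathrm{Aut}(M\otimes\Z_p)}(G)$ acts transitively on $(\bar M_{1,p}^{\ast})^{G}$; this is plausible but is not proved anywhere in the paper, and carrying it out for the full group $G$ (not just $C_p$, where Lemma~\ref{acts transit} applies) would require genuine additional work. The paper avoids this entirely via a structural shortcut: Lemma~\ref{characterization-Bieberbach group with n(G)=1} shows that any such $\Gamma$ splits as $M_{n-1}\rtimes\Z$ with $\Z$ acting through $G$, so $\widehat\Gamma\cong\widehat{M}_{n-1}\rtimes\widehat\Z$, and profinite cancellation (Proposition~\ref{thm 7.4 of Opp26}) gives that $\widehat{M}_{n-1}$ is determined by $\widehat M$. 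This is the content of Proposition~\ref{topological invariants}, and it makes the profinite completion depend only on the $\widehat\Z G$-module $\widehat M$, with no reference to the extension class at all. Your cohomological route, if the local transitivity can be verified, would give a more intrinsic explanation of why the extension data collapses profinitely, but the paper's semidirect-product argument is both shorter and complete as stated.
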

	
	\begin{cor}\label{cor1} $$|\mathfrak{g}(\Gamma)|\leq |H(\Q(\zeta_\delta))|^{a}\left( max\{|(\bar{M}_{1,p}^*)^{G}| \ : \ p\mid \delta\}\right) ^{b},$$ where $a$ is the number of divisors of $\delta$ and $b$ is the number of prime divisors of $\delta$. 
	\end{cor}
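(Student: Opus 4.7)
The plan is to derive the corollary directly from Theorems \ref{cardinality of the crystal class} and \ref{main Theorem} by uniform termwise majorisation, so the argument splits naturally into three small estimates.

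First I would bound each summand appearing in Theorem \ref{main Theorem}. Since for every group action on a finite set the orbit space has at most the size of the set itself, one has $|\mathcal{N}_{\mathrm{Aut}(M)}(G)\backslash(\bar M_{1,p}^{\ast})^{G}|\leq|(\bar M_{1,p}^{\ast})^{G}|$ for each prime $p\mid\delta$; the product ranges over the $b$ prime divisors of $\delta$, so replacing every factor by the maximum gives the bound $\bigl(\max\{|(\bar M_{1,p}^{\ast})^{G}|:p\mid\delta\}\bigr)^{b}$ per summand.

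Next I would bound the number of summands. Since $T$ is a set of representatives for $\mathcal{C}(M)$, one has $|T|=|\mathcal{C}(M)|$, and Theorem \ref{cardinality of the crystal class} (in either the special or the non-special case) writes $|\mathcal{C}(M)|$ as an orbit space of $\prod_{d\mid\delta}H(\Q(\zeta_d))$ under either $\mathrm{Gal}(\zeta_\delta)$ or its subgroup $\mathcal{H}_{D}$; discarding the action yields $|T|\leq\prod_{d\mid\delta}|H(\Q(\zeta_d))|$.

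The remaining and only substantive step is the monotonicity of class numbers in the cyclotomic tower: for every $d\mid\delta$ one has $h(\Q(\zeta_d))\mid h(\Q(\zeta_\delta))$. This is classical; a one-line argument is that the Hilbert class field of $\Q(\zeta_d)$ is everywhere unramified over $\Q(\zeta_d)$, whereas every nontrivial subextension of $\Q(\zeta_\delta)/\Q(\zeta_d)$ is ramified at some prime dividing $\delta/d$, so these two fields are linearly disjoint over $\Q(\zeta_d)$ and their compositum embeds into the Hilbert class field of $\Q(\zeta_\delta)$. Since $\delta$ has exactly $a$ divisors this upgrades the previous estimate to $\prod_{d\mid\delta}|H(\Q(\zeta_d))|\leq|H(\Q(\zeta_\delta))|^{a}$, and combining the three bounds produces the inequality in the corollary. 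I expect this class-number comparison to be the only non-routine ingredient; everything else is direct counting.
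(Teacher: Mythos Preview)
Your argument is correct and follows exactly the same three-step majorisation as the paper: bound each orbit space by the ambient set and replace by the maximum to get the exponent $b$, identify the number of summands as $|\mathcal{C}(M)|$, and bound this by $|H(\Q(\zeta_\delta))|^{a}$ via Theorem~\ref{cardinality of the crystal class}. Your explicit justification of the class-number monotonicity $h(\Q(\zeta_d))\mid h(\Q(\zeta_\delta))$ via linear disjointness with the Hilbert class field is a step the paper leaves implicit (it simply cites Theorem~\ref{cardinality of the crystal class} for the last inequality), so if anything your write-up is more complete here.
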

	\begin{cor}\label{cor2}
		If $\Gamma$ is a special Bieberbach group, then $$|\mathfrak{g}(\Gamma)|=\sum_{M\in T}\left( \prod_{p\in D}|\mathcal{N}_{\mathrm{Aut}(M)}(G)\backslash\bar{M}_{1,p}|\prod_{\substack{q\mid \delta \\ q \notin D}}|\mathcal{N}_{\mathrm{Aut}(M)}(G)\backslash (\bar{M}_{1,q}^{\ast})^{G}|\right) $$
	\end{cor}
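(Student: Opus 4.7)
The plan is to deduce Corollary \ref{cor2} as a specialization of Theorem \ref{main Theorem} under the hypothesis that $\Gamma$ is special. For each prime $p\in D$, the goal is to show that the factor $|\mathcal{N}_{\mathrm{Aut}(M)}(G)\backslash (\bar M_{1,p}^{*})^{G}|$ in the product of Theorem \ref{main Theorem} can be rewritten as $|\mathcal{N}_{\mathrm{Aut}(M)}(G)\backslash \bar M_{1,p}|$. Once this is established, splitting the product over primes dividing $\delta$ into those lying in $D$ and those outside $D$ yields the corollary immediately.

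First, I would invoke Definition \ref{exceptional case}: for $p\in D$ the $\Z C_p$-lattice $M_p$ decomposes into indecomposable summands all of $\Z$-rank $p-1$ except exactly one trivial summand of $\Z$-rank $1$. Hence $M_{1,p}\cong \Z$ as an abelian group and $\bar M_{1,p}=M_{1,p}/pM_{1,p}\cong \F_p$ is one-dimensional over $\F_p$. Consequently, every $n\in \mathcal{N}_{\mathrm{Aut}(M)}(G)$ acts on $\bar M_{1,p}$ by a scalar $\alpha_n\in \F_p^{*}$, and by (\ref{def of the action}) the $\bullet$-action is multiplication by $\alpha_n\widetilde n$.

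Second, I would verify that $(\bar M_{1,p}^{*})^{G}=\bar M_{1,p}^{*}$. Since $G$ is abelian, its conjugation on $C_p$ is trivial, so the character $\widetilde{\,\cdot\,}\colon G\to \F_p^{*}$ is trivial and the $\bullet$-action of $G$ coincides with its ordinary action on $\bar M_{1,p}$. The $G$-action on $M_{1,p}\cong \Z$ factors through a character $G/C_p\to \{\pm 1\}$; I would argue (using the torsion-freeness of $\Gamma_p$ together with the rigid decomposition of $M_p$ imposed by Definition \ref{exceptional case}) that this sign character is trivial, whence $G$ acts trivially on $\bar M_{1,p}$ and all nonzero elements of $\bar M_{1,p}$ are $G$-fixed.

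Third, using the rank-one structure of $\bar M_{1,p}\cong \F_p$, the $\bullet$-action of $\mathcal{N}_{\mathrm{Aut}(M)}(G)$ factors through $\F_p^{*}$ via $n\mapsto \alpha_n\widetilde n$, so its orbits on $\bar M_{1,p}^{*}$ together with the singleton orbit $\{0\}$ exhaust the orbits on $\bar M_{1,p}$. Combining this orbit identification with Step 2 gives
$|\mathcal{N}_{\mathrm{Aut}(M)}(G)\backslash(\bar M_{1,p}^{*})^{G}|=|\mathcal{N}_{\mathrm{Aut}(M)}(G)\backslash \bar M_{1,p}|$ for each $p\in D$, and substituting this into Theorem \ref{main Theorem} and regrouping the product yields the stated formula.

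The main obstacle is the second step: establishing the triviality of the sign character $G/C_p\to \{\pm 1\}$ on $M_{1,p}\cong \Z$ in the special case. This is a structural rigidity statement for Bieberbach groups with the indecomposable pattern of Definition \ref{exceptional case}, and I expect it to require a cohomological analysis of the extension class of $\Gamma_p$ in $H^{2}(C_p,M_p)$ together with the way the rank-one summand sits inside a lattice whose non-trivial summands all have rank $p-1$.
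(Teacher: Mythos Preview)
Your overall strategy---specialize Theorem~\ref{main Theorem} and simplify the factor for each $p\in D$---is exactly the paper's. The paper's one-line proof invokes Theorem~\ref{main Theorem} together with Lemma~\ref{characterization-Bieberbach group with n(G)=1}, and that lemma is precisely what dissolves the ``main obstacle'' you flag in Step~2. By Lemma~\ref{characterization-Bieberbach group with n(G)=1} the lattice admits a $\Z G$-decomposition $M=M_{n-1}\oplus\Z$ in which $\Z$ is a \emph{trivial} $\Z G$-summand; for $p\in D$ the $C_p$-trivial part $M_{1,p}$ has $\Z$-rank~$1$ and already contains this $G$-trivial $\Z$, so $M_{1,p}$ coincides with it and $G$ acts trivially on $\bar M_{1,p}$. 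No separate cohomological analysis of the extension class of $\Gamma_p$ is required: the sign character you worry about is forced to be trivial simply because the unique rank-one $C_p$-trivial summand is the $G$-trivial $\Z$ supplied by Lemma~\ref{characterization-Bieberbach group with n(G)=1}.

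There is, however, a genuine slip in your Step~3. You correctly observe that the $\mathcal{N}_{\mathrm{Aut}(M)}(G)$-orbits on $\bar M_{1,p}$ are the orbits on $\bar M_{1,p}^{*}$ together with the singleton $\{0\}$, and then conclude
\[
|\mathcal{N}_{\mathrm{Aut}(M)}(G)\backslash(\bar M_{1,p}^{*})^{G}|=|\mathcal{N}_{\mathrm{Aut}(M)}(G)\backslash \bar M_{1,p}|.
\]
But your own premise says these two counts differ by~$1$. What Step~2 actually yields is $(\bar M_{1,p}^{*})^{G}=\bar M_{1,p}^{*}$, allowing you to drop the superscript $G$ but not the star. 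Since the paper's proof is only the citation ``Theorem~\ref{main Theorem} together with Lemma~\ref{characterization-Bieberbach group with n(G)=1}'' and supplies no further mechanism to absorb this extra orbit, the discrepancy appears to originate in the printed statement of the corollary rather than in your argument; the formula one actually obtains from Theorem~\ref{main Theorem} plus Lemma~\ref{characterization-Bieberbach group with n(G)=1} has $\bar M_{1,p}^{*}$ in the first product.
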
	
	
	We also deduce as a corollary the main result of \cite{Nery}.
	
	\begin{cor}\label{cor3}
		Let $\Gamma$ be an $n$-dimensional Bieberbach group with maximal abelian normal subgroup $M$ and cyclic holonomy group $G$ of square-free order. If  $|G|$ is a prime number, then $|\mathfrak{g}(\Gamma)|=|\mathcal{C}(M)|$.
	\end{cor}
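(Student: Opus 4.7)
The plan is to specialize Theorem \ref{main Theorem} to $k=1$, so that $\delta=p$ is prime and $G=C_p$. Then the product over prime divisors of $\delta$ reduces to a single factor, and since $M_{1,p}$ is by construction the summand on which $C_p$ acts trivially we have $(\bar M_{1,p}^{\ast})^{G}=\bar M_{1,p}^{\ast}$. Theorem \ref{main Theorem} therefore becomes
$$|\mathfrak{g}(\Gamma)|=\sum_{M'\in T}\bigl|\mathcal{N}_{\mathrm{Aut}(M')}(C_p)\backslash \bar M_{1,p}^{\ast}\bigr|.$$
As $|\mathcal{C}(M)|=|T|$, the corollary reduces to showing that for each $M'\in T$ the $\bullet$-action of $\mathcal{N}_{\mathrm{Aut}(M')}(C_p)$ on $\bar M_{1,p}^{\ast}$ is transitive, so that every summand equals $1$.

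To prove the transitivity I would exhibit two complementary families of normalizer elements. First, any $\Z$-linear automorphism $\varphi$ of $M_{1,p}$, extended by the identity on a complement, centralises $C_p$; hence it lies in $\mathcal{N}_{\mathrm{Aut}(M')}(C_p)$ with $\widetilde\varphi=1$, and its $\bullet$-action on $\bar M_{1,p}$ coincides with the ordinary reduction mod $p$. Via the surjectivity $\mathrm{SL}(r,\Z)\twoheadrightarrow \mathrm{SL}(r,\F_p)$, where $r$ is the $\Z$-rank of $M_{1,p}$, this already produces a transitive action on $\bar M_{1,p}^{\ast}$ as soon as $r\geq 2$. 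Second, for each $k\in\F_p^{\ast}$ the Galois automorphism $\sigma_k\colon\zeta_p\mapsto\zeta_p^k$ extends naturally to every faithful indecomposable summand of $M'$, coupled with the identity on $M_{1,p}$; the resulting $n\in\mathcal{N}_{\mathrm{Aut}(M')}(C_p)$ satisfies $\widetilde n=k$ and acts trivially on $\bar M_{1,p}$, so $n\bullet m=km$ for every $m\in\bar M_{1,p}$. These automorphisms realise scalar multiplication by $\F_p^{\ast}$ and dispose of the remaining rank-one case.

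Putting the two families together, the $\bullet$-action of $\mathcal{N}_{\mathrm{Aut}(M')}(C_p)$ is transitive on $\bar M_{1,p}^{\ast}$ for every $M'\in T$, and therefore $|\mathfrak{g}(\Gamma)|=|T|=|\mathcal{C}(M)|$. The step I expect to be the main obstacle is the rank-one situation, since it genuinely relies on the surjectivity of $\mathcal{N}_{\mathrm{Aut}(M')}(C_p)\to\F_p^{\ast},\ n\mapsto\widetilde n$, and hence on a uniform construction of a Galois-type automorphism on every faithful indecomposable $\Z C_p$-summand allowed by the Diederichsen--Reiner classification (the fractional ideals of $\Z[\zeta_p]$ and the glue-type lattices). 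Conceptually this amounts to embedding $\mathrm{Gal}(\Q(\zeta_p)/\Q)$ into $\mathcal{N}_{\mathrm{Aut}(M')}(C_p)$ in a way that surjects onto $\mathrm{Aut}(C_p)\cong\F_p^{\ast}$.
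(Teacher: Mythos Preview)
Your approach is exactly the paper's: specialize Theorem~\ref{main Theorem} to prime $\delta$ and then invoke transitivity of the $\bullet$-action on $\bar M_{1,p}^{\ast}$, which the paper isolates as Lemma~\ref{acts transit} and cites together with Theorem~\ref{main Theorem} to dispatch the corollary in one line. Your Galois construction for the rank-one (exceptional) case is precisely the paper's proof of Lemma~\ref{acts transit} in that case; for the non-exceptional case the paper does not argue directly but simply cites \cite[Chap.~IV, Theorem~6.1]{Cha86}, which handles in one stroke both the $r\geq 2$ situation you treat via $\mathrm{SL}(r,\Z)\twoheadrightarrow\mathrm{SL}(r,\F_p)$ and the rank-one modules with glue-type summands that you correctly flag as the obstacle. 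So your anticipated difficulty is resolved in the paper not by the Galois embedding you were hoping for (which indeed need not land in $\mathrm{Aut}(M')$ when the ideal class is not Galois-fixed), but by appeal to Charlap's classification.
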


	The outline of this paper is as follows. In Section \ref{Preliminaries}, we summarize without proofs the Oppenheim's classification of integral representations of a cyclic group of square-free order -- important result used in the proof of Theorem \ref{cardinality of the crystal class} -- and presents some preliminaries. Section \ref{Profinite genus} contains a special case of Theorem \ref{main Theorem} which generalizes the results of \cite{Nery} (see Theorem \ref{main Theorem2}) and  the proofs of our main results. We also give an example of use of the formula of Theorem \ref{main Theorem}.

	\subsection*{Notation}
	
	$\delta=$ square-free positive integer. \\
	$a\mid b=$ $a$ divides $b$ (for $a,b\in \Z$). \\
	$\zeta_d=$ primitive $d$-th root of unity (for $d\in \Z$). \\
	$\Phi_d(x)=$  $d$-th cyclotomic polynomial (for $d\in \Z$). \\
	$\Q(\zeta_d)=$ cyclotomic field generated by $\zeta_d$; $\Z[\zeta_d]=$ ring of integers of $\Q(\zeta_d)$ (for $d\in \Z$). \\
	$\mathrm{Gal}(\zeta_d)=$ Galois group of $\Q(\zeta_d)$ over $\Q$.\\
	$G=$ cyclic group of order $\delta$. \\
	$\widehat{\Gamma}=$ profinite completion of $\Gamma$ (see \cite{RZ00}). \\
	$H\backslash X=$ set of  orbits of the action of the group $H$ on a set $X$. \\
	$|X|=$ cardinality of the set $X$.
	
	\section{Preliminaries}\label{Preliminaries}
	
	\subsection{Modules over cyclic groups of square-free order}
	For the convenience of the reader we repeat the relevant results of the  Oppenheim's classification of integral representations of a cyclic group of square-free order \cite{Opp26} without proofs.
	
	Let $D_0$ denote the set of all $n\in \Z$ that divides $\delta$ such that there is only an even number of distinct primes in its decomposition. Similarly, let $D_1$ denote the set of all $n\in \Z$ that divides $\delta$ such that  there is only an odd number of distinct primes in its decomposition.   
	
	In $\Z[x]$ set 
	\begin{equation*}
		s_0(x)=\prod_{d\in D_0}\Phi_d(x), \ s_1(x)=\prod_{d\in D_1}\Phi_d(x)
	\end{equation*}
	and let $s_i=s_i(g), \ i=0,1$. 
	
	\begin{defn}  A  $\Z G$-lattice is  a $\Z G$-module which is finitely generated and free as a $\Z$-module.
	\end{defn}
	
	\begin{lem}[\cite{Opp26}, Lemma 3.4]\label{Lemma 3.4 of Opp26}
		Let $M$ be a  $\Z G$-lattice. Then, $$M_0=\{m\in M :s_0m=0 \}$$ and $M_1=M/M_0$ are $\Z G$-lattices.  
	\end{lem}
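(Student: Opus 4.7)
The plan is to verify the two claims separately, and both reduce to standard facts about submodules of free $\Z$-modules of finite rank once the $G$-invariance is in place.

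For $M_0$, first I would show it is a $\Z G$-submodule of $M$. It is manifestly a $\Z$-submodule, being the kernel of the $\Z$-linear endomorphism $m\mapsto s_0 m$. For $G$-invariance, the key observation is that $G$ is abelian, so $\Z G$ is a commutative ring; hence $s_0 = s_0(g)$ commutes with every element of $G$. Consequently, for $m\in M_0$ and $g'\in G$ one has $s_0(g'm)=g'(s_0m)=0$, so $g'm\in M_0$. Then $M_0$ is a $\Z$-submodule of a finitely generated free $\Z$-module, and by the structure theorem for finitely generated modules over the PID $\Z$ it is itself free of finite rank, hence a $\Z G$-lattice.

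For $M_1=M/M_0$, the quotient inherits the structure of a finitely generated $\Z G$-module. The only nontrivial point is that $M_1$ is free as a $\Z$-module. Since $M_1$ is finitely generated over $\Z$, it suffices to show it is torsion-free. Suppose $n\bar m=0$ in $M_1$ for some $n\in\Z\setminus\{0\}$ and $m\in M$; this means $nm\in M_0$, i.e.\ $s_0(nm)=n(s_0 m)=0$ in $M$. Since $M$ is free over $\Z$ (in particular torsion-free), we conclude $s_0 m=0$, whence $m\in M_0$ and $\bar m=0$ in $M_1$. So $M_1$ is torsion-free and therefore free of finite rank as a $\Z$-module, making it a $\Z G$-lattice as required.

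There is no real obstacle in this argument; the only point worth flagging is the use of commutativity of $\Z G$ (which is crucial for $s_0$ to preserve $M_0$ under the $G$-action) together with the fact that $\Z$ is a PID so that submodules and torsion-free quotients of free modules of finite rank remain free.
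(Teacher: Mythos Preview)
Your argument is correct. Both steps are handled cleanly: the $G$-invariance of $M_0$ follows from the commutativity of $\Z G$, freeness of $M_0$ from the structure theorem for submodules of finitely generated free modules over a PID, and torsion-freeness of $M/M_0$ from the torsion-freeness of $M$ together with the defining property of $M_0$.

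The paper does not actually prove this lemma; it is quoted from \cite{Opp26} as part of a summary of Oppenheim's classification ``without proofs.'' So there is no proof in the paper to compare against, and your write-up supplies precisely the standard verification that the citation stands in for.
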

	
	Note that the module $M$ is an extension of $M_1$ by $M_0$, i.e., an exact sequence 
	\begin{equation*}
		1\rightarrow M_0\rightarrow M\rightarrow M_1\rightarrow 1
	\end{equation*} 
	of $\Z G$-modules. It is well known that there is a bijection between the set of equivalence classes of extensions of $M_1$ by $M_0$ and $\mathrm{Ext}_{\Z G}^{1}(M_1,M_0)$ (see \cite{Rot09} for details). Thus, to characterize a  $\Z G$-lattice $M$ we must determine not only the structure of $M_0$ and $M_1$ but also the group $\mathrm{Ext}_{\Z G}^{1}(M_1,M_0)$.
	
	\begin{prop}[\cite{Opp26}, p. 17]
		Let $M_0$ and $M_1$ be as in Lemma \ref{Lemma 3.4 of Opp26}. Then, $$M_i\cong \bigoplus_{d\in D_i}\mathcal{M}_d$$ where $\mathcal{M}_d:= t_dM_i$ with $t_d:= s_i/\Phi_d(g), \ i=0,1$.
	\end{prop}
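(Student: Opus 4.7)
The plan is to prove the decomposition by applying the Chinese Remainder Theorem to the ring $\Z[g]/(s_i(g))$ and then identifying the submodule $t_d M_i$ with the component corresponding to the $\Z[\zeta_d]$-factor. Since $s_0 M_0 = 0$ by Lemma \ref{Lemma 3.4 of Opp26}, and $s_1 M_1 = 0$ follows from $s_0(g) s_1(g) = g^{\delta} - 1 = 0$ on $M$, each $M_i$ carries a natural $\Z[g]/(s_i(g))$-module structure.

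First I establish the integral coprimality of $\{\Phi_d(g) : d \in D_i\}$ in $\Z[g]$. For distinct $d, d' \in D_i$, Apostol's classical formula for resultants of cyclotomic polynomials gives $\mathrm{Res}(\Phi_d, \Phi_{d'}) = \pm 1$ unless one of $d'/d$ or $d/d'$ is an integer prime power greater than $1$. Because $\delta$ is square-free, every integer quotient of divisors of $\delta$ is itself a square-free divisor of $\delta$, so a prime-power quotient would have to be a single prime, forcing $d$ and $d'$ to differ by exactly one prime factor. This contradicts the assumption that $d, d' \in D_i$ have the same parity of distinct prime divisors. Hence $\mathrm{Res}(\Phi_d, \Phi_{d'}) = \pm 1$, and the ideals $(\Phi_d(g))$ are pairwise coprime in $\Z[g]/(s_i(g))$.

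By the Chinese Remainder Theorem I then obtain a ring isomorphism
\[
\Z[g]/(s_i(g)) \;\xrightarrow{\sim}\; \prod_{d \in D_i} \Z[g]/(\Phi_d(g)) \;=\; \prod_{d \in D_i} \Z[\zeta_d], \qquad g \longmapsto (\zeta_d)_{d \in D_i}.
\]
The orthogonal central idempotents $\{e_d\}_{d \in D_i}$ of the right-hand product induce the canonical decomposition $M_i = \bigoplus_{d \in D_i} e_d M_i$, each summand $e_d M_i$ being a $\Z[\zeta_d]$-lattice (a submodule of the $\Z$-free $M_i$ annihilated by $\Phi_d(g)$).

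It remains to identify $e_d M_i$ with $t_d M_i$. Under the isomorphism above, the element $t_d = \prod_{d' \in D_i,\, d' \neq d} \Phi_{d'}(g)$ has $d''$-th component $\prod_{d' \neq d} \Phi_{d'}(\zeta_{d''})$, which vanishes for $d'' \neq d$ because $\Phi_{d''}(\zeta_{d''}) = 0$ is one of the factors, and which equals $t_d(\zeta_d) \in \Z[\zeta_d]$ for $d'' = d$. Thus $t_d$ corresponds to $t_d(\zeta_d)\cdot e_d$, giving $t_d M_i = t_d(\zeta_d)\cdot e_d M_i \subseteq e_d M_i$. The main obstacle, and the place where the coprimality step is indispensable, is the reverse inclusion, which requires $t_d(\zeta_d)$ to be a unit in $\Z[\zeta_d]$. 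But $N_{\Q(\zeta_d)/\Q}\!\left(\Phi_{d'}(\zeta_d)\right) = \mathrm{Res}(\Phi_d, \Phi_{d'}) = \pm 1$ for every $d' \in D_i\setminus\{d\}$, so each $\Phi_{d'}(\zeta_d)$ is a unit of $\Z[\zeta_d]$, and so is their product $t_d(\zeta_d)$. Therefore $t_d M_i = e_d M_i$, and combining with the CRT decomposition yields $M_i \cong \bigoplus_{d \in D_i} t_d M_i$ as claimed.
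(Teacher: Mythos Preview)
The paper does not supply a proof of this proposition; it is quoted from Oppenheim's thesis as part of the summary at the start of Section~\ref{Preliminaries} that the author explicitly presents ``without proofs''. Your argument is correct and is the natural one: the key observation that distinct $\Phi_d,\Phi_{d'}$ with $d,d'\in D_i$ are coprime in $\Z[x]$ follows exactly as you say from Apostol's resultant formula combined with the parity condition defining $D_i$ (a quotient $d'/d$ that is a prime power must, in the square-free setting, be a single prime, which would change the parity of the number of prime factors), and once the Chinese Remainder decomposition $\Z[x]/(s_i(x))\cong\prod_{d\in D_i}\Z[\zeta_d]$ is in hand, your identification of $t_d M_i$ with the idempotent summand $e_d M_i$ via the unit property of $t_d(\zeta_d)\in\Z[\zeta_d]$ is clean and complete.
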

	
	Note that $s_0M_0=0=s_1M_1$. Thus, $\Phi_d(g)\mathcal{M}_d=0$ for all $d \mid \delta$. Hence, $\mathcal{M}_d$ is a $\Z G/\Phi_d(g)\Z G$-module.
	
	Let $\zeta_d$ be a primitive $d$-th root of unity.  We have a ring isomorphism 
	\begin{equation*}
		\frac{\Z G}{\Phi_d(g) \Z G} \cong \Z[\zeta_d]
	\end{equation*}
	given by $g\mapsto \zeta_d$. Thus, we may then turn $\mathcal{M}_d$ into a $\Z[\zeta_d]$-module by setting  $ \zeta_d\cdotp m := gm, \ m\in \mathcal{M}_d$. Since $\Z[\zeta_d]$ is a Dedekind domain, it follows that we may write
	\begin{equation*}
		\mathcal{M}_d \cong I_{1,d}\oplus I_{2,d}\oplus\cdots\oplus I_{r(d),d}
	\end{equation*}
	where the $\{I_{j,d}\}$ are ideals in $\Z[\zeta_d]$ (see \cite[Theorem 4.13]{CR81}). Moreover, the isomorphism invariants of $\mathcal{M}_d$ are its rank $r(d)$ and the ideal class of the product $I_{1,d}I_{2,d}\cdots I_{r(d),d}$.
	
	\begin{prop}[\cite{Opp26}, Theorem 4.1 and Corollary 4.8]\label{Th 4.1 and Cor 4.8 of Opp26}
		Let $M_0$ and $M_1$ be as in Lemma \ref{Lemma 3.4 of Opp26}. Then, 
		\begin{equation*}
			\mathrm{Ext}_{\Z G}^{1}(M_1,M_0) \cong\bigoplus_{(s,t)\in D_1\times D_0} \Lambda(s,t)
		\end{equation*}
		where $\Lambda(s,t)$ is the $\Z[\zeta_s]/\Phi_t(\zeta_s)\Z[\zeta_s]$-module of $r(s)\times r(t)$ matrices with entries in $\Z[\zeta_s]/\Phi_t(\zeta_s)\Z[\zeta_s]$.
	\end{prop}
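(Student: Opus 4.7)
The plan is to decompose $\mathrm{Ext}^1_{\Z G}(M_1, M_0)$ via biadditivity and the decompositions from the previous proposition, and then show that each individual summand is isomorphic to $\Z[\zeta_s]/\Phi_t(\zeta_s)\Z[\zeta_s]$ independently of the ideal classes involved. First I would combine $M_i \cong \bigoplus_{d \in D_i} \mathcal{M}_d$ with $\mathcal{M}_d \cong \bigoplus_{j=1}^{r(d)} I_{j,d}$ and use biadditivity of $\mathrm{Ext}^1$ to obtain
\begin{equation*}
\mathrm{Ext}^1_{\Z G}(M_1, M_0) \;\cong\; \bigoplus_{(s,t)\in D_1\times D_0}\bigoplus_{i=1}^{r(s)}\bigoplus_{j=1}^{r(t)} \mathrm{Ext}^1_{\Z G}(I_{i,s}, I_{j,t}),
\end{equation*}
reducing the proof to a per-pair computation.

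For the base case I would apply $\mathrm{Hom}_{\Z G}(-, J)$ to the short exact sequence $0 \to \Phi_s(g)\Z G \to \Z G \to \Z[\zeta_s] \to 0$. Since $s \in D_1$ and $t \in D_0$ have different parities, $s \ne t$, and so $\Phi_t(x)$ divides $\pi_s(x) := (x^\delta - 1)/\Phi_s(x)$; hence $\pi_s(\zeta_t) = 0$, the annihilator of $\Phi_s(g)$ acts trivially on $J$, and the restriction map $\mathrm{Hom}_{\Z G}(\Z G, J) \to \mathrm{Hom}_{\Z G}(\Phi_s(g)\Z G, J)$ is multiplication by $\Phi_s(\zeta_t)$ on $J$. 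Combined with $\mathrm{Ext}^1_{\Z G}(\Z G, J) = 0$, this identifies $\mathrm{Ext}^1_{\Z G}(\Z[\zeta_s], J) \cong J/\Phi_s(\zeta_t) J$. When $J$ is a fractional ideal of $\Z[\zeta_t]$, tensoring with the Artinian quotient $\Z[\zeta_t]/\Phi_s(\zeta_t)\Z[\zeta_t]$ (a finite product of local rings, so with trivial Picard group) identifies $J/\Phi_s(\zeta_t) J$ with $\Z[\zeta_t]/\Phi_s(\zeta_t)\Z[\zeta_t] \cong \Z G/(\Phi_s(g), \Phi_t(g)) \cong \Z[\zeta_s]/\Phi_t(\zeta_s)\Z[\zeta_s]$.

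The main obstacle will be upgrading from $\Z[\zeta_s]$ to an arbitrary ideal $I$ of $\Z[\zeta_s]$, as a priori the class of $I$ could enter. To circumvent this I would use the change-of-rings spectral sequence for $\Z G \twoheadrightarrow \Z[\zeta_s]$; the vanishing $\mathrm{Hom}_{\Z G}(\Z[\zeta_s], J) = 0$ (since $\Phi_s(\zeta_t) \ne 0$ and $J$ is $\Z$-torsion-free) forces the collapse in total degree one, yielding
\begin{equation*}
\mathrm{Ext}^1_{\Z G}(I, J) \;\cong\; \mathrm{Hom}_{\Z[\zeta_s]}\bigl(I,\, \Z[\zeta_s]/\Phi_t(\zeta_s)\Z[\zeta_s]\bigr).
\end{equation*}
Since $I$ is a rank-one projective $\Z[\zeta_s]$-module and the Picard group of the Artinian quotient is trivial, this Hom group is isomorphic to $\Z[\zeta_s]/\Phi_t(\zeta_s)\Z[\zeta_s]$ independently of the chosen ideal classes. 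Reassembling the $r(s)\cdot r(t)$ identical summands recovers the module $\Lambda(s,t)$ of $r(s) \times r(t)$ matrices, completing the proof.
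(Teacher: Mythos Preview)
The paper does not supply its own proof of this proposition: it is one of the results from Oppenheim's thesis that the author explicitly ``repeats without proofs'', so there is nothing in the paper to compare your argument against directly.

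That said, your proposal is a sound and self-contained proof. The biadditive reduction to single pairs $(I_{i,s},I_{j,t})$ is standard; your long-exact-sequence computation of $\mathrm{Ext}^1_{\Z G}(\Z[\zeta_s],J)$ via the presentation $0\to \Phi_s(g)\Z G\to \Z G\to \Z[\zeta_s]\to 0$ is correct (the identifications $\mathrm{Hom}_{\Z G}(\Z G,J)\cong J$ and $\mathrm{Hom}_{\Z G}(\Phi_s(g)\Z G,J)\cong J$ with connecting map multiplication by $\Phi_s(\zeta_t)$ are exactly right, since $\mathrm{Ann}_{\Z G}(\Phi_s(g))=\pi_s(g)\Z G$ and $\pi_s(\zeta_t)=0$); and the semilocal/trivial-Picard argument legitimately removes the dependence on the ideal classes of $I$ and $J$. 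The change-of-rings spectral sequence you invoke for general $I$ is valid here without any projectivity hypothesis on $\Z[\zeta_s]$ over $\Z G$ (which indeed fails), because $\mathrm{Hom}_{\Z G}(\Z[\zeta_s],-)$ carries injective $\Z G$-modules to injective $\Z[\zeta_s]$-modules by adjunction; your collapse via $\mathrm{Hom}_{\Z G}(\Z[\zeta_s],J)=0$ then gives the claimed identification. One point worth spelling out in a final write-up is that the $\Z[\zeta_s]$-structure on $\mathrm{Ext}^1_{\Z G}(\Z[\zeta_s],J)\cong J/\Phi_s(\zeta_t)J$ is the one where $\zeta_s$ acts as $g$ (hence as $\zeta_t$ on the $J$-side), which is precisely what your chain of isomorphisms $J/\Phi_s(\zeta_t)J\cong \Z G/(\Phi_s(g),\Phi_t(g))\cong \Z[\zeta_s]/\Phi_t(\zeta_s)\Z[\zeta_s]$ encodes.
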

	Let $L(s,t)$ denote $\Z[\zeta_s]/\Phi_t(\zeta_s)\Z[\zeta_s]$. Suppose now that $M$ is an extension of $M_1$ by $M_0$ corresponding to an element $\lambda \in \mathrm{Ext}_{\Z G}^{1}(M_1,M_0)$. According to Proposition \ref{Th 4.1 and Cor 4.8 of Opp26} we can write 
	\begin{equation*}
		\lambda=(\lambda(s_1,t_1),\cdots,\lambda(s_l,t_l))
	\end{equation*}
	where $(s_i,t_i)\in D_1\times D_0$ and $\lambda(s_i,t_i)$ is an $r(s_i)\times r(t_i)$ matrix with entries in $L(s_i,t_i), \ i=1,\cdots,l$.
	
	\begin{lem}[\cite{Opp26}, Lemma 4.4]\label{lem 4.4 of Opp26}
		Let $s\mid \delta, \ t\mid \delta$ and $t>s$. Then, $L(s,t)$  is a trivial ring unless there is a prime $p$ such that $t=ps$. If $t=ps$, then $L(s,t)=\mathrm{F}_{p,1}\oplus\cdots\oplus \mathrm{F}_{p,v}$, where $\mathrm{F}_{p,i}$ is a field of characteristic $p$, $i=1,\cdots,v$.
	\end{lem}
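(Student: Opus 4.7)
The plan is to split into two cases according to whether there exists a prime $p$ with $t = ps$ or not; since $t \mid \delta$ is square-free, such a $p$ must automatically be coprime to $s$.

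In the case $t \neq ps$ for every prime $p$, I would show that $\Phi_t(\zeta_s)$ is a unit in $\Z[\zeta_s]$, so that $L(s,t)$ is the zero ring. Up to sign, the norm $N_{\Q(\zeta_s)/\Q}(\Phi_t(\zeta_s))$ equals the resultant $\mathrm{Res}(\Phi_s,\Phi_t)$, and a rational prime $p$ divides this resultant iff $\Phi_s$ and $\Phi_t$ share a root in $\overline{\F_p}$. Using the classical reductions of cyclotomic polynomials modulo $p$ (distinct primitive $n$-th roots of unity in $\overline{\F_p}$ when $p \nmid n$, and $\Phi_n \equiv \Phi_{n/p}^{\,p-1} \pmod p$ when $p \mid n$ with $n/p$ coprime to $p$, as is forced in the square-free setting), a short case analysis on whether $p$ divides $s$, $t$, both, or neither shows that $\Phi_s$ and $\Phi_t$ share a root only when $p \nmid s$, $p \mid t$, and $s = t/p$, i.e.\ precisely when $t = ps$. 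Under our hypothesis this never occurs, so the resultant is $\pm 1$ and $\Phi_t(\zeta_s)$ is a unit.

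For $t = ps$ with $p \nmid s$, I would start from the polynomial identity $\Phi_s(x^p) = \Phi_s(x)\Phi_{ps}(x)$ in $\Z[x]$, differentiate, and evaluate at $x = \zeta_s$ (where $\Phi_s(\zeta_s) = 0$) to get
$$\Phi_{ps}(\zeta_s) = p\, \zeta_s^{p-1}\, \frac{\Phi_s'(\zeta_s^p)}{\Phi_s'(\zeta_s)}.$$
Since $\Phi_s'(\zeta_s)$ generates the different ideal $\mathfrak{d}$ of $\Z[\zeta_s]/\Z$ (as $\Z[\zeta_s]$ is the full monogenic ring of integers of $\Q(\zeta_s)$), and $\Phi_s'(\zeta_s^p) = \sigma_p(\Phi_s'(\zeta_s))$ for the Galois automorphism $\sigma_p:\zeta_s\mapsto\zeta_s^p$ (well defined because $\gcd(p,s)=1$), while $\mathfrak{d}$ is Galois-invariant, the two generators differ by a unit of $\Z[\zeta_s]$. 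Hence $\Phi_{ps}(\zeta_s)\Z[\zeta_s] = p\Z[\zeta_s]$, giving the ring isomorphism $L(s,ps) \cong \Z[\zeta_s]/p\Z[\zeta_s]$. Since $p\nmid s$, the prime $p$ is unramified in $\Z[\zeta_s]$, so $p\Z[\zeta_s] = \mathfrak{p}_1\cdots\mathfrak{p}_v$ for distinct prime ideals $\mathfrak{p}_i$, and the Chinese Remainder Theorem yields the desired decomposition $L(s,ps) \cong \prod_{i=1}^{v} \Z[\zeta_s]/\mathfrak{p}_i$ as a product of finite residue fields of characteristic $p$.

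The main obstacle is verifying that $\Phi_s'(\zeta_s^p)/\Phi_s'(\zeta_s)$ is genuinely a unit of $\Z[\zeta_s]$ and not merely an element of norm one; the cleanest route is the Galois-invariance of the different ideal, though one could alternatively compute $\mathfrak{p}$-adic valuations of $\Phi_s'(\zeta_s)$ at each prime of $\Z[\zeta_s]$ above $p$ and check equality against $\sigma_p(\Phi_s'(\zeta_s))$ directly.
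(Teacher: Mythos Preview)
The paper does not supply its own proof of this lemma; it is quoted verbatim from Oppenheim's thesis as part of the preliminaries ``without proofs,'' so there is no argument in the paper to compare against.

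Your proposed proof is correct. The resultant argument for the triviality case is the standard one, and your case analysis on whether $p$ divides $s$, $t$, both, or neither is complete (the square-free hypothesis on $\delta$ is exactly what makes the reduction formula $\Phi_n \equiv \Phi_{n/p}^{\,p-1} \pmod p$ available whenever $p \mid n$). For the case $t = ps$, the identity $\Phi_s(x^p) = \Phi_s(x)\Phi_{ps}(x)$ together with the differentiation trick is a clean way to identify the ideal generated by $\Phi_{ps}(\zeta_s)$ with $p\Z[\zeta_s]$; your appeal to Galois-invariance of the different to conclude that $\Phi_s'(\zeta_s^p)/\Phi_s'(\zeta_s)$ is a unit is valid, since for any Galois extension $L/K$ the inverse different $\mathfrak{d}_{L/K}^{-1} = \{x \in L : \mathrm{Tr}_{L/K}(x\,\mathcal{O}_L) \subseteq \mathcal{O}_K\}$ is visibly stable under $\mathrm{Gal}(L/K)$. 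One minor remark: since $\Phi_s$ is monic, the norm $N_{\Q(\zeta_s)/\Q}(\Phi_t(\zeta_s))$ equals $\mathrm{Res}(\Phi_s,\Phi_t)$ exactly, not merely up to sign, though this has no bearing on the argument.
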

	
	Thus, the entries $a_{ij} \in L(s,t)$ of the matrix $\lambda(s,t)$ can be written as $$a_{ij}=(\alpha_{ij}^{1},\cdots,\alpha_{ij}^{v})$$ where $\alpha_{ij}^{k}\in \mathrm{F}_{p,k}$.  Then to $\lambda(s,t)$ corresponds the $v$-tuple of matrices $((\alpha_{ij}^{1}),\cdots,(\alpha_{ij}^{v}))$. Set $\rho_{k}(\lambda(s,t))=\mathrm{rank}(\alpha_{ij}^{k})$.
	
	We need a little more notation to state the classification of the  $\Z G$-lattices. Let 
	\begin{equation*}
		D^{\ast}= \{(s,t) : s\mid\delta, t\mid\delta \ \text{and} \ s/t \ \text{is a prime power} \} 
	\end{equation*}
	and let $D_{1}^{\ast}=\{(s,t) : (s,t)\in D^{\ast}, s\in D_1 \}$.
	
	\begin{prop}[\cite{Opp26}, Theorem 4.13]\label{Theorem 4.13 of Opp26}
		Let $M$ be a $\Z G$-lattice. A full set of isomorphism invariants of $M$ consists of:
		\begin{enumerate}
			\item[(i)] The $\Z G$-rank of $\mathcal{M}_d$, $r(d)$, for each $\mathcal{M}_d$ and  $d\mid \delta$.
			\item[(ii)] The ideal class of product $I_{1,d}I_{2,d}\cdots I_{r(d),d}$ associated with $\mathcal{M}_d$, for each $d\mid \delta$.
			\item[(iii)] $\{\rho_k(\lambda(s,t)) :  (\lambda(s,t))\in \mathrm{Ext}_{\Z G}^{1}(M_1,M_0), (s,t)\in D_{1}^{\ast}, k=1,\cdots,v  \}$.
		\end{enumerate}
	\end{prop}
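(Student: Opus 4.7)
The plan is to reduce the classification of a $\Z G$-lattice $M$ to three pieces of data: the isomorphism type of $M_0$, the isomorphism type of $M_1$, and the extension class giving $M$ as an extension of $M_1$ by $M_0$. Two lattices with isomorphic pairs $(M_0, M_1)$ will be isomorphic if and only if their extension classes lie in the same orbit under the natural action of $\mathrm{Aut}_{\Z G}(M_1)\times \mathrm{Aut}_{\Z G}(M_0)$ on $\mathrm{Ext}^1_{\Z G}(M_1, M_0)$; this standard fact is what organizes the whole argument.

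Items (i) and (ii) follow by applying the structure theory of modules over Dedekind domains to the canonical summands $\mathcal{M}_d = t_d M_i$. Here the $t_d$ act as pairwise orthogonal idempotents on $M_i$, making the decomposition $M_i \cong \bigoplus_{d\in D_i}\mathcal{M}_d$ canonical, so the isomorphism type of each $\mathcal{M}_d$ is an invariant of $M$. Since each $\mathcal{M}_d$ is a finitely generated torsion-free $\Z[\zeta_d]$-module, it decomposes as $I_{1,d}\oplus\cdots\oplus I_{r(d),d}$ with complete invariants given by the rank $r(d)$ and the Steinitz class of the product $I_{1,d}\cdots I_{r(d),d}$, matching (i) and (ii).

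For item (iii), Proposition \ref{Th 4.1 and Cor 4.8 of Opp26} decomposes $\mathrm{Ext}^1_{\Z G}(M_1,M_0)$ as a direct sum of matrix modules $\Lambda(s,t)$ over the rings $L(s,t)$, and Lemma \ref{lem 4.4 of Opp26} shows that only pairs $(s,t)\in D_1^{\ast}$ contribute, with each such $L(s,t)$ splitting as a finite direct sum of fields $\mathrm{F}_{p,k}$. Consequently every entry of $\lambda(s,t)$ decomposes into its components over the $\mathrm{F}_{p,k}$, giving a tuple of matrices $(\alpha_{ij}^{k})$ over fields. I would then show that the induced action of $\mathrm{Aut}_{\Z G}(M_1)\times\mathrm{Aut}_{\Z G}(M_0)$ on each $\Lambda(s,t)$ factors through $\mathrm{GL}(r(s),\mathrm{F}_{p,k})\times \mathrm{GL}(r(t),\mathrm{F}_{p,k})$ acting by left- and right-multiplication, so that two extension classes lie in the same orbit precisely when, for every $k$, the corresponding matrices have equal rank over $\mathrm{F}_{p,k}$. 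This gives the invariants $\rho_k(\lambda(s,t))$ and shows they form a complete system together with (i) and (ii).

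The main obstacle is the final verification in the previous paragraph: tracing the bijection of Proposition \ref{Th 4.1 and Cor 4.8 of Opp26} through the identifications $\frac{\Z G}{\Phi_d(g)\Z G}\cong \Z[\zeta_d]$ and showing that the reduction modulo $\Phi_t(\zeta_s)$ of automorphisms of the ideal decomposition $\bigoplus I_{j,d}$ surjects onto $\mathrm{GL}(r(d),L(s,t))$ and hence onto each $\mathrm{GL}(r(d),\mathrm{F}_{p,k})$. Surjectivity onto $\mathrm{GL}$ at the level of each field factor is essentially a matter of strong approximation for ideals of $\Z[\zeta_d]$ combined with the existence of free complements in the Steinitz normal form. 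Once this is in hand, the classification of matrices over a field by rank is the classical Gaussian elimination argument applied factor-by-factor.
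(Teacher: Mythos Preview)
The paper does not prove this proposition: it is quoted from Oppenheim's thesis \cite{Opp26} as part of the preliminaries, which the author explicitly presents ``without proofs.'' There is therefore no argument in the paper to compare your sketch against beyond the bare citation.

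Your outline is nonetheless the natural strategy and is, in broad strokes, how such a classification is carried out. One genuine subtlety you gloss over deserves flagging. The action of $\mathrm{Aut}_{\Z G}(M_1)\times\mathrm{Aut}_{\Z G}(M_0)$ on $\bigoplus_{(s,t)}\Lambda(s,t)$ is \emph{not} a product of independent actions on the summands: the same factor $\mathrm{Aut}_{\Z[\zeta_s]}(\mathcal{M}_s)$ acts simultaneously on every block $\Lambda(s,t)$ with that fixed $s$, and likewise $\mathrm{Aut}_{\Z[\zeta_t]}(\mathcal{M}_t)$ acts on every block with that fixed $t$. Hence knowing that each individual block-action surjects onto $\mathrm{GL}(r(s),\mathrm{F}_{p,k})\times\mathrm{GL}(r(t),\mathrm{F}_{p,k})$ is not by itself enough to conclude that the orbit on the full $\mathrm{Ext}$ group is determined by the tuple of ranks; you must show that the reduction
\[
\mathrm{Aut}_{\Z[\zeta_s]}(\mathcal{M}_s)\ \longrightarrow\ \prod_{t,k}\mathrm{GL}\bigl(r(s),\mathrm{F}_{p,k}\bigr)
\]
(product over all $t$ with $(s,t)\in D_1^{\ast}$ and all field factors $k$) is surjective, and similarly for each $t$. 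The key structural input here is that for a fixed $s$ the various admissible $t$'s differ from $s$ by \emph{distinct} primes, so the residue rings $L(s,t)$ have pairwise coprime characteristics and a Chinese-remainder argument applies. Your ``strong approximation'' remark is pointed at a single block and does not yet isolate this coupling across blocks; once you make that step explicit, the rank invariants in (iii) do separate the orbits as claimed.
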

	In what follows, $r(d,M)$ and $\lambda(s,t,M)$ denote $r(d)$ and $\lambda(s,t)$ on the module $M$, respectively. Moreover, to shorten notation, let $[I_{\mathcal{M}_d}]$ denote the ideal class of product  $I_{1,d}I_{2,d}\cdots I_{r(d),d}$ associated with $\mathcal{M}_d$, for each $d\mid \delta$.
	
	By \cite[Theorem 7.3]{Opp26} and \cite[Proposition 31.15]{CR81} we have the following profinite version of Proposition \ref{Theorem 4.13 of Opp26}.
	
	\begin{prop}\label{profinite version of Th 4.13 of Opp26}
		Let $M$ and $N$ be $\Z G$-lattices. Then, $\widehat M\cong \widehat N$ as $\widehat{\Z}G$-modules if and only if
		\begin{enumerate}
			\item[(i)] $r(d,M)=r(d,N)$ for each $d\mid \delta$.
			\item[(ii)] $\rho_k(\lambda(s,t,M))=\rho_k(\lambda(s,t,N))$ for each $k=1,\cdots,v$ and $(s,t)\in D_{1}^{\ast}$.
		\end{enumerate}
	\end{prop}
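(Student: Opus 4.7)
The strategy is to pass from profinite isomorphism to a family of local isomorphisms prime-by-prime and then invoke the known local classification. More precisely, I would first apply \cite[Proposition 31.15]{CR81}, which says that two $\Z G$-lattices $M$ and $N$ satisfy $\widehat M \cong \widehat N$ as $\widehat{\Z}G$-modules if and only if $\Z_p\otimes_\Z M \cong \Z_p\otimes_\Z N$ as $\Z_p G$-modules for every prime $p$. The task is thus reduced to understanding when two $\Z G$-lattices are locally isomorphic at every prime.

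For this local question I would appeal to \cite[Theorem 7.3]{Opp26}, which supplies the $p$-adic counterpart of Proposition \ref{Theorem 4.13 of Opp26}: a complete set of isomorphism invariants of the $\Z_p G$-lattice $\Z_p\otimes_\Z M$ consists of the ranks $r(d,M)$ for all $d\mid\delta$ together with those rank invariants $\rho_k(\lambda(s,t,M))$ attached to pairs $(s,t)\in D_1^\ast$ for which $t/s$ is a power of $p$. The crucial point is that the ideal class invariant $[I_{\mathcal{M}_d}]$ appearing in Proposition \ref{Theorem 4.13 of Opp26} disappears after $p$-adic completion, because each $\Z_p[\zeta_d]$ is a semilocal Dedekind ring and therefore a principal ideal ring with trivial class group.

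Taking the union of these local invariants over all primes $p$ then recovers exactly conditions (i) and (ii) of the statement; at primes $p\nmid\delta$ the group ring $\Z_p G$ is a maximal order, so only the ranks $r(d,M)$ contribute and these are already captured by (i). Combining this with the first step yields the asserted equivalence. The main technical point to verify is that, via the decomposition of $L(s,t)$ given in Lemma \ref{lem 4.4 of Opp26}, each extension invariant $\rho_k(\lambda(s,t,M))$ genuinely localises at the single prime $p$ with $t=ps$; once this is in hand the proposition is obtained by a direct concatenation of the two cited results.
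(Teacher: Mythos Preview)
Your proposal is correct and follows exactly the paper's route: the paper's proof consists solely of the sentence ``By \cite[Theorem 7.3]{Opp26} and \cite[Proposition 31.15]{CR81}'', and you have simply unpacked what those two citations contribute and how they combine. Your additional remarks about the vanishing of the ideal class invariant over the semilocal ring $\Z_p[\zeta_d]$ and about the localisation of each $\rho_k$ at the single prime $p$ with $t=ps$ are the right justifications for why the concatenation works.
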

	
	The next result will be useful to us, later on.
	
	\begin{prop}[\cite{Wie84}, Proposition 5.1]\label{Prop 5.1 of Wie84}
		Let $C_n$ be a cyclic group of order $n$ ($n$ square-free) and let $M,N,$ and $L$ be $\Z C_n$-lattices. Then, $M\oplus L\cong N\oplus L$ implies $M\cong N$ if and only if $n=6,10,14$, or $n$ is prime.
	\end{prop}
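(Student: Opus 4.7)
The plan is to combine the classification of $\Z C_n$-lattices from Proposition \ref{Theorem 4.13 of Opp26} with classical cancellation results for lattices over orders in the style of Jacobinski--Swan--Roiter. A first observation is that the numerical invariants listed in Proposition \ref{Theorem 4.13 of Opp26} behave predictably under direct sum: the rank $r(d)$ is additive, the ideal class $[I_{\mathcal{M}_d}]$ is multiplicative in $H(\Q(\zeta_d))$, and the extension matrix $\lambda(s,t,M\oplus L)$ is block-diagonal in the natural basis, so its rank $\rho_k$ is additive as well. Hence the numerical part of the isomorphism invariants cancels straightforwardly; the delicate point is whether the full module structure (and not just the invariants) is actually recoverable.

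For the \emph{if} direction when $n = p$ is prime, I would invoke Jacobinski--Swan cancellation applied to the order $\Z C_p$ sitting inside the commutative semisimple $\Q$-algebra $\Q \times \Q(\zeta_p)$. This algebra trivially satisfies the Eichler condition (commutative simple components are automatically Eichler), so stable isomorphism in a fixed genus implies isomorphism. The hypothesis $M\oplus L\cong N\oplus L$ supplies stable isomorphism, while genus equivalence $\widehat M\cong \widehat N$ follows from Proposition \ref{profinite version of Th 4.13 of Opp26} together with the additive behaviour of the invariants. For the composite values $n\in\{6,10,14\}$, the key input is that $\Q(\zeta_n)$ has class number one, so the ideal-class invariant is trivial; a finite case analysis of the extension data in Proposition \ref{Th 4.1 and Cor 4.8 of Opp26} then lets me reconstruct $M$ from $M\oplus L$ and $L$.

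For the \emph{only if} direction, I would exhibit counterexamples for every square-free $n$ outside $\{p:p\text{ prime}\}\cup\{6,10,14\}$. The natural ingredients are pairs of fractional ideals of $\Z[\zeta_n]$ realising distinct ideal classes (these exist precisely because $H(\Q(\zeta_n))$ is non-trivial in the excluded range), combined with an auxiliary lattice $L$ engineered to absorb the class-group discrepancy. The standard Steinitz-type isomorphism $I\oplus J\cong \Z[\zeta_n]\oplus IJ$ available for Dedekind domains is the basic mechanism by which non-isomorphic $M,N$ become isomorphic after adjoining a common summand; non-isomorphism $M\not\cong N$ is detected by Proposition \ref{Theorem 4.13 of Opp26}.

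The main obstacle is the \emph{only if} direction: constructing counterexamples with the prescribed properties requires a careful interplay between class-group arithmetic in the cyclotomic fields $\Q(\zeta_d)$ for all $d\mid n$ and the extension-matrix invariants $\rho_k$. In particular, one has to ensure that the genus of $M$ does coincide with that of $N$ (otherwise cancellation fails for a trivial reason) and that the Jacobinski--Swan machinery truly breaks down for the chosen lattices; verifying this involves checking that the relevant local cancellation obstructions assemble to a non-trivial global obstruction exactly for the excluded values of $n$.
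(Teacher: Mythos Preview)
The paper does not prove this proposition at all; it is quoted without argument from \cite{Wie84}, so there is no in-paper proof to compare your sketch against.

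That said, your outline has a genuine gap in the composite case, in both directions. Your mechanism there rests entirely on the ideal-class invariant $[I_{\mathcal{M}_d}]$: you claim that $n\in\{6,10,14\}$ works because the relevant class groups are trivial, and that cancellation fails for the remaining composite $n$ because non-trivial ideal classes furnish counterexamples via the Steinitz isomorphism $I\oplus J\cong \Z[\zeta_n]\oplus IJ$. But this criterion does not separate the cases correctly. For $n=15$, $n=21$, $n=22$, $n=33$, $n=35$ (and many others) \emph{every} cyclotomic field $\Q(\zeta_d)$ with $d\mid n$ has class number~$1$, yet the proposition asserts that cancellation fails for these $n$. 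Thus your ``only if'' construction produces no counterexample whatsoever for such $n$, and conversely your ``if'' argument, were it valid, would prove cancellation for $n=15$ just as well as for $n=6$.

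The obstruction to cancellation for square-free composite $n\notin\{6,10,14\}$ is therefore not visible at the level of ideal classes; it lives in the extension data of item~(iii) in Proposition~\ref{Theorem 4.13 of Opp26} and, in Wiegand's own treatment, in the unit structure of the conductor squares that present $\Z C_n$ as an iterated fibre product of rings of cyclotomic integers. What distinguishes $6,10,14$ from $15,21,22,\ldots$ among square-free composites is a condition on those glueing data, not a class-number condition. Any self-contained argument must isolate and use that distinction; your proposal, as it stands, does not.
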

	The next result tells us that the direct-sum cancellation holds for $\widehat{\Z} G$-modules.
	\begin{prop}\label{thm 7.4 of Opp26}
		Let $M_1,M_2, N_1$ and $N_2$ be $\Z G$-lattices. Then, 
		$$\widehat{M}_1\oplus\widehat{M}_2\cong\widehat{N}_1\oplus\widehat{N}_2$$ if and only if $$r(d,M_1)+r(d,M_2)=r(d,N_1)+r(d,N_2)$$ and $$\rho_k(\lambda(s,t,M_1))+\rho_k(\lambda(s,t,M_2))=\rho_k(\lambda(s,t,N_1))+\rho_k(\lambda(s,t,N_2))$$ for each $d\mid\delta$ and $k=1,2,\cdots, v$.
	\end{prop}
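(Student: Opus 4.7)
The plan is to reduce Proposition \ref{thm 7.4 of Opp26} to Proposition \ref{profinite version of Th 4.13 of Opp26} by showing that the invariants appearing there are additive under direct sums. First, I would invoke the fact that profinite completion commutes with finite direct sums of finitely generated $\Z G$-modules, so that
\[
\widehat{M}_1\oplus \widehat{M}_2 \cong \widehat{M_1\oplus M_2} \quad \text{and} \quad \widehat{N}_1\oplus \widehat{N}_2 \cong \widehat{N_1\oplus N_2}
\]
as $\widehat{\Z}G$-modules. Thus the hypothesis in the statement is equivalent to $\widehat{M_1\oplus M_2} \cong \widehat{N_1\oplus N_2}$, and by Proposition \ref{profinite version of Th 4.13 of Opp26} this is equivalent to
\[
r(d, M_1\oplus M_2)=r(d, N_1\oplus N_2) \quad \text{and} \quad \rho_k(\lambda(s,t,M_1\oplus M_2))=\rho_k(\lambda(s,t,N_1\oplus N_2))
\]
for every $d\mid \delta$, $(s,t)\in D_1^\ast$ and $k=1,\ldots,v$.

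It then suffices to prove that both invariants are additive, i.e. that for any two $\Z G$-lattices $A$ and $B$ one has $r(d, A\oplus B)=r(d,A)+r(d,B)$ and $\rho_k(\lambda(s,t,A\oplus B))=\rho_k(\lambda(s,t,A))+\rho_k(\lambda(s,t,B))$. The first identity is immediate from the definitions in Lemma \ref{Lemma 3.4 of Opp26} and the subsequent proposition: direct summation commutes with $M\mapsto M_i$ and with $M_i\mapsto t_d M_i$, so $\mathcal{M}_d$ for $A\oplus B$ is the direct sum $\mathcal{M}_d(A)\oplus \mathcal{M}_d(B)$ as $\Z[\zeta_d]$-modules, and hence the ranks add.

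For the matrix invariants $\rho_k$, I would observe that the short exact sequence $0\to (A\oplus B)_0\to A\oplus B\to (A\oplus B)_1\to 0$ is, up to canonical isomorphism, the direct sum of the corresponding sequences for $A$ and $B$. Via the natural isomorphism of Proposition \ref{Th 4.1 and Cor 4.8 of Opp26}, this translates into the fact that the matrix $\lambda(s,t,A\oplus B)$ is block-diagonal with diagonal blocks $\lambda(s,t,A)$ and $\lambda(s,t,B)$; decomposing each entry via Lemma \ref{lem 4.4 of Opp26} preserves this block-diagonal shape componentwise, so the $k$-th rank $\rho_k$ is additive. Combining both additivities with the equivalence derived in the first paragraph yields the desired statement.

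The main obstacle I expect is the second additivity, namely the careful identification of the extension class of $A\oplus B$ with the direct sum of the individual extension classes under Proposition \ref{Th 4.1 and Cor 4.8 of Opp26}. This is a naturality check for the isomorphism of $\mathrm{Ext}^1_{\Z G}(M_1,M_0)$ with the direct sum of the $\Lambda(s,t)$; once this is verified the proof reduces to the elementary fact that the rank of a block-diagonal matrix (in each field component $\mathrm{F}_{p,k}$) is the sum of the ranks of its blocks.
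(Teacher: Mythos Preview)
Your argument is correct and more self-contained than the paper's. The paper simply cites two external results—Oppenheim's Theorem 7.4 (which already records, at the level of genera of $\Z G$-lattices, that the invariants $r(d,-)$ and $\rho_k(\lambda(s,t,-))$ are additive under direct sums) together with \cite[Proposition 31.15]{CR81} (identifying $\widehat{M}\cong\widehat{N}$ with $M$ and $N$ lying in the same genus). In other words, the paper outsources both the additivity and the passage to profinite completions.

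Your route instead factors through Proposition \ref{profinite version of Th 4.13 of Opp26}, which already encodes the profinite side, and then verifies additivity by hand. This is a legitimate and arguably cleaner reduction within the paper's own framework: the additivity of $r(d,-)$ is immediate, and for $\rho_k$ your block-diagonal observation is correct because $\mathrm{Ext}^1_{\Z G}(-,-)$ is biadditive, so the extension class of $A\oplus B$ is the image of $(\lambda_A,\lambda_B)$ under the canonical split injection, which in the matrix description of Proposition \ref{Th 4.1 and Cor 4.8 of Opp26} is exactly block-diagonal embedding. Since Proposition \ref{Theorem 4.13 of Opp26} guarantees $\rho_k$ is an isomorphism invariant, it does not matter which representative one computes it on. The only cost of your approach is writing out this naturality check, which you correctly flag; the benefit is that a reader need not consult Oppenheim's thesis for Theorem 7.4.
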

	\begin{proof}
		The result follows from \cite[Theorem 7.4]{Opp26} and \cite[Proposition 31.15]{CR81}.
	\end{proof}
	
	\subsection{Galois groups acting on ideal class groups}
	
	Let $\zeta_m$ be a primitive $m$-th root of unity. It is well known that the Galois group $\mathrm{Gal}(\zeta_m)$ of the cyclotomic field $\Q(\zeta_m)$ is isomorphic to $(\Z/m\Z)^{\times}$ (the multiplicative group of units of the ring $\Z/m\Z$).
	
	\begin{prop}[\cite{DF04}, Chapter 14, Corollary 27]\label{Cor 27 of DF04}
		Let $m=p_{1}^{a_1}p_{2}^{a_2}\cdots p_{k}^{a_k}$ be the decomposition of the positive integer $m$ into distinct prime powers. Then, 
		\begin{equation*}
			\mathrm{Gal}(\zeta_m)\cong \mathrm{Gal}(\zeta_{p_{1}^{a_1}})\times\cdots \times \mathrm{Gal}(\zeta_{p_{k}^{a_k}}).
		\end{equation*} 
	\end{prop}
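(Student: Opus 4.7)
The plan is to translate everything to unit groups of $\Z/n\Z$ and then invoke the Chinese Remainder Theorem. The key classical ingredient is that for every positive integer $n$ the map sending $\sigma \in \mathrm{Gal}(\zeta_n)$ to the unique residue $a(\sigma) \in (\Z/n\Z)^{\times}$ characterized by $\sigma(\zeta_n) = \zeta_n^{a(\sigma)}$ is a well-defined group isomorphism $\mathrm{Gal}(\zeta_n) \cong (\Z/n\Z)^{\times}$. Well-definedness and injectivity are immediate; surjectivity rests on the irreducibility of the $n$-th cyclotomic polynomial $\Phi_n$ over $\Q$, which yields $[\Q(\zeta_n):\Q] = \varphi(n) = |(\Z/n\Z)^{\times}|$ so that the natural injective homomorphism is forced to be onto.

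With this identification in hand for $n=m$ and for each $n=p_i^{a_i}$, the proposition reduces to exhibiting a natural isomorphism
$$(\Z/m\Z)^{\times} \;\cong\; \prod_{i=1}^{k}(\Z/p_i^{a_i}\Z)^{\times}.$$
Since $m = p_1^{a_1}\cdots p_k^{a_k}$ with pairwise coprime factors, the Chinese Remainder Theorem gives the ring isomorphism $\Z/m\Z \cong \prod_i \Z/p_i^{a_i}\Z$, and passing to unit groups preserves the product, as $(A\times B)^{\times}=A^{\times}\times B^{\times}$ for any rings $A,B$.

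Finally, one verifies that these identifications are compatible: the restriction map $\mathrm{Gal}(\zeta_m) \to \mathrm{Gal}(\zeta_{p_i^{a_i}})$ corresponds, under the above isomorphisms, to the $i$-th component reduction $(\Z/m\Z)^{\times} \to (\Z/p_i^{a_i}\Z)^{\times}$. This is a direct check using $\zeta_{p_i^{a_i}} = \zeta_m^{m/p_i^{a_i}}$ together with the defining relation $\sigma(\zeta_m) = \zeta_m^{a(\sigma)}$. The only nontrivial ingredient in the whole argument — and thus the main obstacle to a self-contained proof — is the irreducibility of $\Phi_n$ over $\Q$, which is a classical fact (Eisenstein's criterion after $x\mapsto y+1$ for prime $n$, followed by an induction/Galois-theoretic extension to general $n$).
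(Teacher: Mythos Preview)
Your argument is correct and is essentially the standard proof: identify each $\mathrm{Gal}(\zeta_n)$ with $(\Z/n\Z)^{\times}$ via $\sigma\mapsto a(\sigma)$, then apply the Chinese Remainder Theorem to the unit groups. The compatibility check with restriction is a nice touch that makes the isomorphism canonical rather than abstract.

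Note, however, that the paper does not supply its own proof of this proposition at all --- it simply quotes the result from \cite{DF04}, Chapter~14, Corollary~27, and uses it as a black box. So there is no ``paper's proof'' to compare against; your write-up is exactly the argument one finds in Dummit--Foote (or any standard reference), and it is complete modulo the irreducibility of $\Phi_n$, which you correctly flag as the only nontrivial input.
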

	
	The next lemma shows that there is an action of the Galois group $\mathrm{Gal}(\zeta_m)$ on the ideal class group $H(\Q(\zeta_m))$.
	
	\begin{lem}[\cite{Cha86}, Chapter IV, Exercise 6.2]\label{Exercise 6.2}
		Let $A$ and $B$ be ideals of $\Z[\zeta]$. If $A$ and $B$ are in the same ideal class, then $\sigma(A)$ and $\sigma(B)$ are in the same ideal class for any $\sigma\in\mathrm{Gal}(\zeta_m)$.
	\end{lem}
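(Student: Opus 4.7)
The plan is to unwind the definition of being in the same ideal class and then apply $\sigma$. Recall that $A$ and $B$ are in the same ideal class of $\Z[\zeta_m]$ precisely when there exist nonzero $\alpha, \beta \in \Z[\zeta_m]$ such that $\alpha A = \beta B$ as $\Z[\zeta_m]$-submodules of $\Q(\zeta_m)$ (equivalently, when the fractional ideal $A B^{-1}$ is principal). So the first step would be to write this relation down explicitly, choosing such $\alpha$ and $\beta$.

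Next I would use that every $\sigma \in \mathrm{Gal}(\zeta_m)$ is a ring automorphism of $\Z[\zeta_m]$ fixing $\Z$, extending to a field automorphism of $\Q(\zeta_m)$. From this it follows that for any ideal $I$ of $\Z[\zeta_m]$, the image $\sigma(I)$ is again an ideal of $\Z[\zeta_m]$, and that $\sigma$ respects multiplication by scalars: $\sigma(\gamma I) = \sigma(\gamma)\,\sigma(I)$ for $\gamma \in \Z[\zeta_m]$. I would verify these two points in one short paragraph, noting that $\sigma$ permutes the conjugates of $\zeta_m$ and hence stabilizes the ring of integers $\Z[\zeta_m]$.

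Applying $\sigma$ to the equality $\alpha A = \beta B$ then yields $\sigma(\alpha)\,\sigma(A) = \sigma(\beta)\,\sigma(B)$. Since $\sigma(\alpha)$ and $\sigma(\beta)$ are nonzero elements of $\Z[\zeta_m]$, this exhibits $\sigma(A)$ and $\sigma(B)$ as members of the same ideal class, completing the proof. There is no real obstacle here; the only thing to be careful about is justifying that $\sigma$ sends $\Z[\zeta_m]$ into itself (so that $\sigma(A)$ is genuinely an ideal of $\Z[\zeta_m]$ rather than merely of some conjugate ring), which follows from $\Z[\zeta_m]$ being the ring of integers of the Galois extension $\Q(\zeta_m)/\Q$.
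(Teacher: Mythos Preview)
Your argument is correct and is the standard one-line proof of this fact. Note, however, that the paper does not actually supply a proof of this lemma: it is stated with a citation to \cite[Chapter IV, Exercise 6.2]{Cha86} and left unproved. So there is no ``paper's own proof'' to compare against; your write-up simply fills in the exercise, and it does so cleanly.
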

	
	\begin{remark}
		\begin{enumerate}
			\item[(i)] Given any positive square-free integer $\delta$   suppose that $d\mid \delta$. Then, there is $u\in \Z$ such that $\delta=du$. Since $\zeta^{u}$ is a primitive $d$-th  root of unity, the field $\Q(\zeta_d)$ is a subfield of $\Q(\zeta_{\delta})$. Thus, for any $\sigma$ in $\mathrm{Gal}(\zeta_{\delta})$ we have $\sigma|_{\Q(\zeta_d)}\in\mathrm{Gal}(\zeta_d) $. Therefore, $\mathrm{Gal}(\zeta_{\delta})$ acts via automorphisms on $H(\Q(\zeta_d))$ for each $d\mid \delta$.
			\item[(ii)] From (i) it follows that $\mathrm{Gal}(\zeta_{\delta})$ acts via automorphisms on the direct product $\prod_{d\mid\delta}H(\Q(\zeta_d))$. We let
			$\mathrm{Gal}(\zeta_\delta)\backslash \prod_{d\mid \delta}H(\Q(\zeta_d))$ be the orbit set.
		\end{enumerate}		
	\end{remark}
	
	\begin{defn}
		Let $M$ and $N$ be $\Z G$-modules. A semi-linear homomorphism from $M$ to $N$  is a pair $(f,\varphi)$  where $f:M\rightarrow N$ is an abelian group homomorphism and $\varphi$ is  an automorphism of $G$ such that $$f(x\cdotp m)=\varphi(x)\cdotp f(m)$$ for $x\in G$ and $m\in M$.
	\end{defn}
	Let $\varphi$ be an automorphism of a finite group $G$ and let $M$ be a $\Z G$-module. Let $(M)^{\varphi}$ denote the $\Z G$-module $M$ given by the action $g\star m:=\varphi(g)\cdotp m$ for $g\in G$ and $m\in M$.
	
	\begin{prop}\label{Invariants_Semi-Linear Iso}
		Let $M$ and $N$ be $\Z G$-lattices. Then, $M$ will be semi-linearly isomorphic to $N$ if and only if
		\begin{enumerate}
			\item[(i)] $r(d,M)=r(d,N)$ for each $d\mid \delta$.
			\item[(ii)] $\rho_k(\lambda(s,t,M))=\rho_k(\lambda(s,t,N))$ for each $k=1,\cdots,v$ and $(s,t)\in D_{1}^{\ast}$.
			\item[(iii)] $\sigma\cdotp [I_{\mathcal{M}_d}]=[J_{\mathcal{N}_d}]$ for each $d\mid \delta$ and for some $\sigma\in \mathrm{Gal}(\zeta_{\delta})$.
		\end{enumerate}
	\end{prop}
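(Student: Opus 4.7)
The plan is to reduce the statement to Oppenheim's classification (Proposition \ref{Theorem 4.13 of Opp26}) by observing that a semi-linear isomorphism $(f,\varphi):M\to N$ is exactly a genuine $\Z G$-module isomorphism $f:M\to N^{\varphi^{-1}}$, where on $N^{\varphi^{-1}}$ the action is $g\star n:=\varphi^{-1}(g)\cdot n$. Thus $M$ and $N$ are semi-linearly isomorphic if and only if there exists $\varphi\in\mathrm{Aut}(G)$ with $M\cong N^{\varphi}$ as $\Z G$-modules. Under the canonical isomorphism $\mathrm{Aut}(G)\cong(\Z/\delta\Z)^{\times}\cong\mathrm{Gal}(\zeta_\delta)$, an automorphism $\varphi:g\mapsto g^{a}$ corresponds to a Galois element $\sigma_a$, which is the object that will appear in condition (iii).

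The core of the argument is then to compute the Oppenheim invariants of the twisted module $N^{\varphi}$. First I would verify that $\mathcal{N}_d^{\varphi}=\mathcal{N}_d$ as abelian groups: since $\zeta_d^{a}$ is again a primitive $d$-th root of unity whenever $\gcd(a,\delta)=1$, the element $\Phi_d(g)$ annihilates $n$ under the twisted action precisely when $\Phi_d(g^{a})$ annihilates it under the original one, and the two conditions coincide. This yields $r(d,N^{\varphi})=r(d,N)$, recovering (i). Next, passing to the $\Z[\zeta_d]$-module structure $\zeta_d\mapsto g$, the twist replaces $\zeta_d$ by $\zeta_d^{a}$, so $\mathcal{N}_d^{\varphi}$ is the $\sigma_a$-pullback of $\mathcal{N}_d$; consequently the ideal class of the Steinitz determinant is transported to $\sigma_a\cdot[I_{\mathcal{N}_d}]$, which gives (iii). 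A parallel computation for the element $\lambda\in\mathrm{Ext}_{\Z G}^{1}(M_1,M_0)$ shows that the Galois action $\sigma_a$ on each $L(s,t)=\bigoplus_k \mathrm{F}_{p,k}$ permutes the fields $\mathrm{F}_{p,k}$, and since each factor has the same residue degree over $\F_p$, the ranks $\rho_k$ are preserved (after matching the indexing $k$ that the author fixes once and for all), giving (ii).

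With these computations in hand, both implications drop out. For the \emph{only if} direction, a semi-linear isomorphism $(f,\varphi)$ gives $M\cong N^{\varphi}$, and applying Proposition \ref{Theorem 4.13 of Opp26} to this $\Z G$-isomorphism together with the three invariant computations above yields (i)--(iii) with $\sigma=\sigma_a$. For the \emph{if} direction, given conditions (i)--(iii) with some $\sigma\in\mathrm{Gal}(\zeta_\delta)$, choose $\varphi\in\mathrm{Aut}(G)$ corresponding to $\sigma$; then by the computations of $N^{\varphi}$'s invariants the hypotheses force $r(d,M)=r(d,N^{\varphi})$, $\rho_k(\lambda(s,t,M))=\rho_k(\lambda(s,t,N^{\varphi}))$ and $[I_{\mathcal{M}_d}]=[I_{\mathcal{N}_d^{\varphi}}]$ for all relevant indices, so Proposition \ref{Theorem 4.13 of Opp26} produces a $\Z G$-isomorphism $M\cong N^{\varphi}$, which unwinds to a semi-linear isomorphism $(f,\varphi)$ from $M$ to $N$.

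The main technical obstacle I anticipate is the invariance of the $\rho_k(\lambda(s,t))$ under the twist in step (ii): the Galois action $\sigma_a$ genuinely permutes the field factors $\mathrm{F}_{p,k}$ of $L(s,t)$ (as determined by the factorization of $\Phi_t$ modulo $p$), so the identity $\rho_k(\lambda(s,t,N^{\varphi}))=\rho_k(\lambda(s,t,N))$ requires either indexing the $\mathrm{F}_{p,k}$ along a Galois orbit with a consistent convention, or rephrasing (ii) as equality of multisets. Carefully fixing such a convention, using that $\sigma_a$ acts by $\F_p$-algebra automorphisms and hence preserves matrix ranks over each factor, is the crux of the argument; once that bookkeeping is settled, the rest reduces to invoking Proposition \ref{Theorem 4.13 of Opp26}.
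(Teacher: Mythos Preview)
Your approach is essentially the same as the paper's: both reduce semi-linear isomorphism to the existence of $\varphi\in\mathrm{Aut}(G)$ with $M\cong N^{\varphi}$ as $\Z G$-modules, invoke Oppenheim's classification (Proposition~\ref{Theorem 4.13 of Opp26}), and identify the effect of the twist on the Steinitz class with the Galois action via $\mathrm{Aut}(G)\cong(\Z/\delta\Z)^{\times}\cong\mathrm{Gal}(\zeta_\delta)$. The paper is terser---it simply asserts (i) and (ii) and then writes down the explicit map $u\mapsto\varphi^{-1}(u)$ realizing $(J_{\mathcal{N}_d})^{\varphi}\cong\varphi^{-1}\cdot J_{\mathcal{N}_d}$---while you are more explicit about why $r(d)$ is unchanged and rightly flag the bookkeeping for the $\rho_k$ under the Galois permutation of the factors $\mathrm{F}_{p,k}$, a point the paper passes over without comment.
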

	\begin{proof}
		Let $M$ and $N$ be $\Z G$-lattices. Note that $M$ and $N$ are semi-linearly isomorphic if and only if $M\cong (N)^{\varphi}$ as $\Z G$-modules for some $\varphi\in \mathrm{Aut}(G)$. As $G$ is a cyclic group of square-free order $\delta$, by Proposition  \ref{Theorem 4.13 of Opp26} we have
		\begin{enumerate}
			\item[(i)] $r(d,M)=r(d,N)$ for each $d\mid\delta$.
			\item[(ii)] $\rho_k(\lambda(s,t,M))=\rho_k(\lambda(s,t,N))$ for each $k=1,\cdots,v$ and $(s,t)\in D_{1}^\ast$.
			\item[(iii)] $I_{\mathcal{M}_d}\cong (J_{\mathcal{N}_d})^{\varphi}$ as $\Z[\zeta_d]$-modules for each $d\mid \delta$ and for some $\varphi\in \mathrm{Aut}(G)$ (see \cite[Lemma 22.2]{CR62} ).
		\end{enumerate}
		For each $d\mid \delta$, we will denote by $C_d$ the subgroup of $G$ of order $d$. Recall that  $\mathrm{Gal}(\zeta_{\delta})\cong (\Z/\delta\Z)^{\times}\cong \mathrm{Aut}(G)$. So to finish the proof it suffices to show that $(J_{\mathcal{N}_d})^{\varphi}\cong \varphi^{-1}\cdotp J_{\mathcal{N}_d}$ as $\Z C_d$-modules. Suppose $\varphi(c)=c^{l}$ where $c\in C_d$ and $l$ is a positive integer such that $(l,\delta)=1$. Consider the map $\eta: (J_{\mathcal{N}_d})^{\varphi}\rightarrow \varphi^{-1}\cdotp J_{\mathcal{N}_d}$ defined by $u\mapsto \varphi^{-1}(u)$. It is clear that $\eta$ is a  group isomorphism, so we have to show that $\eta$ is a $C_d$-homomorphism.  Note that 
		\begin{eqnarray*}
			\eta(c\star u) &=& \eta(\varphi(c)\cdotp u) \\
			&=&  \varphi^{-1}(c^{l}\cdotp u) \\
			&=& \varphi^{-1}(\zeta_{d}^{l} u) \\
			&=&\varphi^{-1}(\zeta_{d}^{l})\varphi^{-1}(u) \\
			&=& \zeta_{d}\varphi^{-1}(u) \\
			&=& c\cdotp \eta(u)
		\end{eqnarray*}
		where $c\in C_d$.  This finishes the proof of Proposition \ref{Invariants_Semi-Linear Iso}.
	\end{proof}
	
	\subsection{Bieberbach groups and Cohomology of Groups}
	
	If $\Gamma$ is any Bieberbach group, then $\Gamma$ satisfies an exact sequence
	\begin{equation}\label{exact sequence}
		1\rightarrow M\rightarrow \Gamma\rightarrow H\rightarrow 1,
	\end{equation}
	where $M$ is a maximal abelian subgroup (subgroup of all translations) of $\Gamma$ and $H$ is a finite group. 
	
	\begin{prop}[\cite{Cha86}, Chapter I, Proposition 4.1]\label{Cha, Prop 4.1}
		Let $\Gamma$ be an $n$-dimensional Bieberbach group. Then, the translation subgroup $M$ is the unique normal, maximal abelian subgroup of $\Gamma$.
	\end{prop}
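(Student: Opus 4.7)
I would split the statement into two pieces: (i) that $M$ is itself a normal, maximal abelian subgroup, and (ii) that every normal abelian subgroup of $\Gamma$ is contained in $M$.  Together these give the uniqueness.

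For (i), conjugation makes $M$ into a $\Z G$-module with $G=\Gamma/M$ acting faithfully, which is part of the Bieberbach data already recalled in the introduction.  The kernel of this action equals $C_\Gamma(M)/M$, so faithfulness forces $C_\Gamma(M)=M$; any abelian subgroup of $\Gamma$ containing $M$ must then centralise $M$ and thus coincide with $M$, while normality is built into the definition of $M$.

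For (ii), let $N$ be a normal abelian subgroup of $\Gamma$, let $H\leq G$ be its image under the quotient $\Gamma\to\Gamma/M$, and aim at $H=\{1\}$.  Using the Bieberbach embedding $\Gamma\hookrightarrow M\rtimes G$ I would write elements as pairs $(A,v)$ with linear part $A\in G$ and translation part $v\in M$, and compute
$$[(A,v),(I,w)]=(I,(A-I)w), \qquad (A,v)\in\Gamma,\ w\in M.$$
Normality of both $N$ and $M$ gives $[N,M]\subseteq N\cap M$, so letting $w$ range over $M$ yields $(A-I)M\subseteq N\cap M$ for every $A\in H$.  Since $N$ is abelian, $H$ acts trivially on $N\cap M$, i.e.\ $(B-I)$ annihilates $N\cap M$ for every $B\in H$.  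Composing the two observations produces the key identity
$$(B-I)(A-I)=0 \quad \text{on } M, \qquad A,B\in H.$$

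Specialising to $B=A$ gives $(A-I)^2=0$ on $M$.  Because $A$ has finite order in $G$, the operator it induces on the rational space $M\otimes_\Z\Q$ is semisimple by Maschke's theorem, hence $A-I$ is simultaneously semisimple and nilpotent and must be zero.  Thus $A$ acts trivially on $M$; faithfulness of $G$ on $M$ then forces $A=1$ in $G$, giving $H=\{1\}$ and $N\subseteq M$, which together with (i) finishes the proof.  The main obstacle I anticipate is recognising the last step: over $\Z$ the equation $(A-I)^2=0$ does not force $A=I$, so one has to pass to $\Q$ and invoke semisimplicity of finite order operators to conclude -- everything preceding this is commutator bookkeeping inside the affine group.
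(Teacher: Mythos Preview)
The paper does not actually prove this proposition; it merely quotes it from Charlap's book \cite{Cha86}.  So there is no in-paper argument to compare against.  That said, your proof is correct and self-contained, and it is essentially the algebraic rendering of Charlap's geometric argument.

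Two small remarks on presentation.  First, the phrase ``Bieberbach embedding $\Gamma\hookrightarrow M\rtimes G$'' is not quite right: the extension $1\to M\to\Gamma\to G\to 1$ need not split, so $\Gamma$ is not in general a subgroup of $M\rtimes G$.  What you actually use is only that conjugation by $\gamma\in\Gamma$ on $m\in M$ is the $G$-action, i.e.\ $\gamma m\gamma^{-1}=\pi(\gamma)\cdot m$, whence $[\gamma,m]=(A-I)m$ with $A=\pi(\gamma)$; this holds in any extension and is all your commutator bookkeeping requires.  (Alternatively, you may embed $\Gamma$ in the full isometry group $\R^n\rtimes O(n)$ and compute there, which is closer to Charlap's setting.)  Second, the passage to $M\otimes_\Z\Q$ is exactly the right move: over $\Z$ alone $(A-I)^2=0$ would not suffice, but finite order of $A$ gives a separable minimal polynomial over $\Q$, so $A-I$ is both semisimple and nilpotent, hence zero.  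With these clarifications your argument is complete.
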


	Note that the exact sequence \eqref{exact sequence} gives a natural structure of $\Z H$-module on $M$. Since $M$ is maximal abelian,  it follows that $M$ is a faithful $\Z H$-module. Recall that a crystal class $(H,M)$ is the set all  $n$-dimensional Bieberbach groups $\Gamma$ that appears as an extension of $H$ by $M$ and that two crystal classes $(H,M)$ and $(H',M')$ are arithmetically equivalent if there exist isomorphisms $\varphi:H\rightarrow H'$ and $f:M\rightarrow M'$ such that $f(h\cdot m)=\varphi(h)\cdot f(m)$ for all $h\in H$ and $m\in M$, i.e., if $H$ and $H'$ are conjugate subgroups of $\mathrm{GL}(n,\Z)$. The resulting equivalence classes are the arithmetic crystal classes.
	
	\begin{lem}\label{Iso B gropus implies iso sl iso} 
		Isomorphic Bieberbach groups determine the same arithmetic crystal class.
	\end{lem}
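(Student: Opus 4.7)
The plan is to unpack the definition of arithmetic equivalence (a semi-linear isomorphism between the extension data) and derive it directly from any group isomorphism $\psi:\Gamma\to\Gamma'$ using the uniqueness of the translation subgroup.

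First I would invoke Proposition \ref{Cha, Prop 4.1}: in each of $\Gamma$ and $\Gamma'$, the translation subgroup $M$, resp.\ $M'$, is the \emph{unique} normal maximal abelian subgroup. Since any isomorphism $\psi:\Gamma\to\Gamma'$ preserves the properties of being normal, abelian, and maximal abelian, it must carry $M$ onto $M'$. Thus $\psi$ restricts to a group isomorphism $f:=\psi|_{M}:M\to M'$ and descends to an isomorphism $\varphi:H=\Gamma/M\to\Gamma'/M'=H'$ making the obvious diagram of short exact sequences commute.

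Next I would check the semi-linear compatibility $f(h\cdot m)=\varphi(h)\cdot f(m)$. Recall that the $\Z H$-module structure on $M$ is given by conjugation in $\Gamma$: if $\tilde h\in\Gamma$ is any lift of $h\in H$, then $h\cdot m=\tilde h m\tilde h^{-1}$. Applying $\psi$ yields
\[
f(h\cdot m)=\psi(\tilde h m\tilde h^{-1})=\psi(\tilde h)\,\psi(m)\,\psi(\tilde h)^{-1}.
\]
Since $\psi(\tilde h)$ projects to $\varphi(h)$ in $H'$, it is a lift of $\varphi(h)$, so the right-hand side equals $\varphi(h)\cdot\psi(m)=\varphi(h)\cdot f(m)$. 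This is exactly the semi-linear condition, so the crystal classes $(H,M)$ and $(H',M')$ are arithmetically equivalent in the sense stated just above the lemma.

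I do not expect any real obstacle: once the uniqueness of the translation subgroup is in place, the rest is a one-line verification that conjugation transports correctly under $\psi$. The only subtlety worth flagging is that arithmetic equivalence is phrased in the excerpt in two ways (conjugacy of holonomy representations inside $\mathrm{GL}(n,\Z)$ versus existence of a semi-linear isomorphism of the $\Z H$-lattices); it is the semi-linear formulation that falls out directly from the argument above, and the two formulations are equivalent after choosing $\Z$-bases of $M$ and $M'$.
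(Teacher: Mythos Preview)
Your argument is correct and is essentially identical to the paper's own proof: both invoke Proposition~\ref{Cha, Prop 4.1} to conclude that the isomorphism carries $M$ onto $M'$, restrict and pass to the quotient to obtain $(f,\varphi)$, and then verify the semi-linear condition by applying the isomorphism to the conjugation $\tilde h\, m\,\tilde h^{-1}$. Your closing remark about the two formulations of arithmetic equivalence is a reasonable clarification but is not needed for the proof itself.
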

	\begin{proof}
		Let $\Gamma_1$ and $\Gamma_2$ be $n$-dimensional Bieberbach groups with the holonomy groups $H_1, H_2$  and maximal abelian normal subgroups $M_1,M_2$, respectively. Suppose that  $F:\Gamma_1\rightarrow \Gamma_2$ is an isomorphism. By Proposition \ref{Cha, Prop 4.1}, $F(M_1)=M_2$. Hence, $F$ induces an isomorphism $\varphi:H_1\rightarrow H_2$. Let $\pi_1:\Gamma_1\rightarrow H_1$ and $\pi_2:\Gamma_2\rightarrow H_2$ be the natural projections. Let $\gamma_1\in \Gamma_1$ and suppose $\pi_1(\gamma_1)=h_1\in H_1$. By the definition of $\varphi$, we have $\pi_2(F(\gamma_1))=\varphi(h_1)$. Hence, if $f$ denotes the restriction of $F$ to $M$, then	
		\begin{equation*}
			f(h_1\cdotp m)=F(\gamma_1 m \gamma_{1}^{-1})=F(\gamma_1)F(m)F(\gamma_1)^{-1}=\varphi(h_1)\cdotp f(m),
		\end{equation*} 
		where $m\in M_1$. Therefore, $\Gamma_1$ and $\Gamma_2$ determine the same arithmetic crystal class.
	\end{proof}
	
	\begin{lem}\label{Independence of the CC}
		Let $(H,M)$ and $(H',M')$ be  belong to the same arithmetic crystal class. Then, for each Bieberbach group $\Gamma$ in $(H,M)$ there is an isomorphic group $\Gamma'$ in $(H',M')$.
	\end{lem}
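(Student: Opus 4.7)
The plan is to use the semi-linear isomorphism $(f,\varphi)$ between $(H,M)$ and $(H',M')$ to transport the extension data defining $\Gamma$ over to one defining a candidate $\Gamma'$, and then exhibit an explicit group isomorphism $\Gamma \to \Gamma'$. The torsion-freeness of $\Gamma'$ will be automatic from this isomorphism, so no separate verification is needed.

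First, I would fix the data. Since $(H,M)$ and $(H',M')$ are arithmetically equivalent, there is a group isomorphism $\varphi:H\to H'$ and an abelian-group isomorphism $f:M\to M'$ with $f(h\cdot m)=\varphi(h)\cdot f(m)$ for all $h\in H$, $m\in M$. A Bieberbach group $\Gamma$ in $(H,M)$ is determined (up to equivalence of extensions) by a cohomology class $[c]\in H^2(H,M)$, where $c:H\times H\to M$ is a normalized 2-cocycle.

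Next, I would define a cocycle $c':H'\times H'\to M'$ by
\[
c'(h_1',h_2'):=f\bigl(c(\varphi^{-1}(h_1'),\varphi^{-1}(h_2'))\bigr).
\]
A direct check, using the semi-linearity $f(h\cdot m)=\varphi(h)\cdot f(m)$ and the fact that $\varphi^{-1}$ is a group homomorphism, shows that the cocycle identity for $c$ transports to the cocycle identity for $c'$; this is the step where the hypothesis is actually used, but it is routine bookkeeping. Let $\Gamma'$ denote the resulting extension of $H'$ by $M'$ determined by $c'$, so $\Gamma'$ belongs to the crystal class $(H',M')$.

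Finally, I would construct the isomorphism. Using a set-theoretic section of $\Gamma\to H$, every element of $\Gamma$ is represented uniquely as a pair $(m,h)\in M\times H$ with multiplication $(m_1,h_1)(m_2,h_2)=(m_1+h_1\cdot m_2+c(h_1,h_2),h_1h_2)$, and similarly for $\Gamma'$. Define $F:\Gamma\to\Gamma'$ by $F(m,h):=(f(m),\varphi(h))$. The semi-linearity of $f$ together with the definition of $c'$ makes $F$ a group homomorphism, and since $f$ and $\varphi$ are bijections, so is $F$. Thus $\Gamma\cong\Gamma'$ as abstract groups; in particular $\Gamma'$ is torsion-free and hence a Bieberbach group in $(H',M')$. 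The main (and only nontrivial) obstacle is verifying that $c'$ is a cocycle and that $F$ respects the twisted multiplication, both of which reduce to careful applications of $f(h\cdot m)=\varphi(h)\cdot f(m)$.
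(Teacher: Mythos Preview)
Your proposal is correct and follows essentially the same approach as the paper: transport the $2$-cocycle via $(f,\varphi)$ to define $\Gamma'$, and then verify that $F(m,h)=(f(m),\varphi(h))$ is a group isomorphism using the semi-linearity relation. Your write-up is in fact slightly cleaner, since you make explicit that the torsion-freeness of $\Gamma'$ (and hence its being a Bieberbach group) follows immediately from $\Gamma'\cong\Gamma$.
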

	\begin{proof}
		Let $\Gamma$ be a Bieberbach group of the crystal class $(H,M)$. Let $\xi: H\times H\rightarrow M$ be a $2$-cocycle corresponding to the extension
		\begin{equation*}
			1\rightarrow M\rightarrow \Gamma \rightarrow H\rightarrow 1,
		\end{equation*}
		and consider $\Gamma$ to be $M\times H$ with the multiplication
		\begin{equation*}
			(m_1,h_1)(m_2,h_2)=(m_1+h_1m_2+\xi(h_1,h_2),h_1h_2),
		\end{equation*}
		where $m_1,m_2\in M$ and $h_1,h_2\in H$. Since $(H,M)$ and $(H',M')$ are in the same arithmetic crystal class, there exist isomorphisms $\varphi:H\rightarrow H'$ and $f:M\rightarrow M'$ such that  $$f(h\cdot m)=\varphi(h)\cdot f(m)$$ for all $h\in H$ and $m\in M$. Now define a map $F:M\times H\rightarrow M'\times H'$ by $$F(m,h)=(f(m),\varphi(h))$$ for $m\in M$ and $h\in H$. Clearly, $F$ is a bijective and 
		\begin{eqnarray*}
			F[(m_1,h_1)(m_2,h_2)]&=& F(m_1+h_1m_2+\xi(h_1,h_2),h_1h_2) \\
			&=& (f(m_1)+\varphi(h_1)f(m_2)+f(\xi(h_1,h_2)),\varphi(h_1)\varphi(h_2)) \\
			&=& (f(m_1)+\varphi(h_1)f(m_2)+f(\xi( f^{-1}fh_1 f^{-1}f,f^{-1}fh_2f^{-1}f)),\varphi(h_1)\varphi(h_2)) \\
			&=& (f(m_1)+\varphi(h_1)f(m_2)+f(\xi( f^{-1}\varphi(h_1)f,f^{-1}\varphi(h_2)f)),\varphi(h_1)\varphi(h_2)) \\
			&=& (f(m_1)+\varphi(h_1)f(m_2)+\xi '(\varphi(h_1),\varphi(h_2)),\varphi(h_1)\varphi(h_2)) \\
			&=& (f(m_1),\varphi(h_1))(f(m_2),\varphi(h_2)) 
		\end{eqnarray*}
		where $\xi'(\varphi(h_1),\varphi(h_2))=f( \xi(f^{-1}\varphi(h_1)f,f^{-1}\varphi(h_2)f))$ is a $2$-cocycle from $H'\times H'$ to $M'$. Therefore, $F(\Gamma)=\Gamma'$ is a Bieberbach group of the crystal class $(H',M')$.
	\end{proof}
	
	\begin{remark}\label{remark-Independence of the CC}
		It follows from Lemmas \ref{Iso B gropus implies iso sl iso} and \ref{Independence of the CC} that to find all possible Bieberbach groups (up to isomorphism) of an arithmetic crystal class it is sufficient to find all possible Bieberbach groups of only one representative of the class.
	\end{remark}
	
	Recall that the groups $\Gamma$ that satisfies an exact sequence \eqref{exact sequence}  can be classified  by elements of the second cohomology group  $H^{2}(H,M)$ of $H$ with coefficients in $M$. It is known that if $H$ is a finite group and $M$ is a finitely generated $\Z H$-module, then $H^{2}(H,M)$ is a finite group (see \cite[Corollary 9.41]{Rot09}). 
	
	\begin{prop}[\cite{Cha86}, Chapter III, Theorem 2.1]\label{Th 2.1, CIII, Cha86}
		Let $\Gamma$ be the extension corresponding to $\alpha \in H^{2}(H,M)$. Then, $\Gamma$ is torsion-free if and only if, for any cyclic subgroup $C_p$ of $H$ of prime order $p$, the image of the restriction homomorphism $res^{2}(\alpha)\in H^{2}(C_p,M)$ is not zero.
	\end{prop}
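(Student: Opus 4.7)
My approach rests on two well-known facts from the cohomology of groups. First, $H^2(H,M)$ is in bijection with equivalence classes of extensions $1\to M\to \Gamma\to H\to 1$, and the zero class corresponds exactly to the split extensions $M\rtimes H$. Second, the restriction homomorphism $\mathrm{res}^2\colon H^2(H,M)\to H^2(C_p,M)$ has an extension-theoretic interpretation: if $\pi\colon\Gamma\to H$ is the projection associated to $\alpha$, then $\mathrm{res}^2(\alpha)$ is precisely the class of the pulled-back extension
\begin{equation*}
1\longrightarrow M\longrightarrow \pi^{-1}(C_p)\longrightarrow C_p\longrightarrow 1.
\end{equation*}
Both facts are standard (Brown, \emph{Cohomology of Groups}), and once they are in place the proposition reduces to tracking torsion elements through the projection $\pi$.

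For the \emph{only if} direction I would argue by contrapositive. Assume $\mathrm{res}^2(\alpha)=0$ for some cyclic subgroup $C_p\le H$ of prime order. By the two facts above, the subextension $\pi^{-1}(C_p)$ splits, so $\pi^{-1}(C_p)\cong M\rtimes C_p$ and in particular contains a subgroup of order $p$. Hence $\Gamma$ has a nontrivial element of finite order, contradicting torsion-freeness.

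For the converse, suppose $\Gamma$ contains a nontrivial torsion element. Since $M$ is torsion-free as an abelian group, we have $\langle\gamma\rangle\cap M=\{1\}$ for any torsion $\gamma\in\Gamma$, so $\pi$ restricted to any finite cyclic subgroup of $\Gamma$ is injective. Replacing $\gamma$ by an appropriate power, I may assume $\gamma$ has prime order $p$. Setting $C_p=\langle \pi(\gamma)\rangle\le H$, the map $\pi(\gamma)^k\mapsto \gamma^k$ is a well-defined homomorphism $C_p\to \pi^{-1}(C_p)$ splitting the projection, so the restricted extension is split and $\mathrm{res}^2(\alpha)=0$.

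The genuine content of the argument sits entirely in the two general facts invoked at the outset; the remainder is essentially a bookkeeping exercise on a prime-order lift of a torsion element. The \emph{main obstacle}, if one wishes to keep the write-up self-contained, is to justify the extension-theoretic interpretation of $\mathrm{res}^2$, namely the identification of $\mathrm{res}^2(\alpha)$ with the class of $\pi^{-1}(C_p)\to C_p$. In practice I would simply cite this from a standard reference rather than re-derive it, since it is a routine consequence of the bar-resolution description of $\mathrm{res}^2$.
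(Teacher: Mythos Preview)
The paper does not give a proof of this proposition at all; it is simply quoted from Charlap's book (\cite{Cha86}, Chapter III, Theorem 2.1) as a background result, with no argument supplied. Your proposal is a correct and complete proof, and it is essentially the standard argument one finds in that reference: torsion in $\Gamma$ forces a prime-order lift of some $C_p\le H$, which is exactly a splitting of the restricted extension and hence $\mathrm{res}^2(\alpha)=0$; conversely, vanishing of $\mathrm{res}^2(\alpha)$ gives a split subextension $M\rtimes C_p$ inside $\Gamma$ and hence $p$-torsion. The only point worth flagging is that in the paper's setting $M$ is a $\Z H$-lattice, so $M$ is indeed torsion-free as an abelian group and your use of $\langle\gamma\rangle\cap M=\{1\}$ is justified.
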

	
	\begin{defn}
		An element $\alpha \in H^{2}(H,M)$ is special, if it defines a Bieberbach group.
	\end{defn}
	
	Let $X(H,M)$ denote the subset of $H^{2}(H,M)$ formed by all the special elements.
	
	We need a little more notation to characterize all possible Bieberbach groups in the crystal class. The normalizer $\mathcal{N}_{\mathrm{Aut}(M)}(H)$ of $H$ in $\mathrm{Aut}(M)$ acts on $H^{2}(H,M)$ by the formula 
	\begin{equation}\label{def of the action2}
		[\phi\circledast c](h_1,h_2)=\phi(c(\phi^{-1}h_1\phi,\phi^{-1}h_2\phi)),
	\end{equation}
	where $c: H\times H\rightarrow M$ is a $2$-cocycle,  $\phi\in \mathcal{N}_{\mathrm{Aut}(M)}(H)$, and $h_1,h_2\in H$ (see \cite[p. 168]{Cha86}). 
	
	\begin{remark}
		Let $M$ be a $\Z C_p$-module for a cyclic group $C_p=\langle x\rangle$ of prime order $p$. Then, $M$ can be written in the form $M=M_1\oplus M_2$, where $M_1$ is the largest direct summand of $M$ on which $C_p$ acts trivially. Thus, $H^2(C_p,M)\cong \bar{M}_1$ (see \cite[Chap. IV, Theorem 5.1]{Cha86}). Therefore, we can define an action of $\mathcal{N}_{\mathrm{Aut}(M)}(C_p)$ on $H^{2}(C_p,M)$ by the formula \eqref{def of the action}. We claim that this action is equal to the action \textacutedbl$\circledast$\textgravedbl defined in \eqref{def of the action2}. Indeed, let $\phi\in \mathcal{N}_{\mathrm{Aut}(M)}(C_p)$ and define $\widetilde{\phi}\in \mathrm{Aut}(C_p)$ by $\widetilde{\phi}(x)=\phi^{-1}x\phi$. Note that if $c:C_p\times C_p\rightarrow M$ is a $2$-cocycle, then $$[\widetilde{\phi}_{\ast}(c)](x,y)=c(\widetilde{\phi}(x),\widetilde{\phi}(y))$$ and $$[\phi_{\ast}(c)](x,y)=\phi(c(x,y)),$$ for $x,y\in C_p$, define homomorphisms $\widetilde{\phi}_{\ast}:H^2(C_p,M)\rightarrow H^2(C_p,M)$ and $\phi_{\ast}:H^2(C_p,M)\rightarrow H^2(C_p,M)$, respectively. Thus, the action \textacutedbl$\circledast$\textgravedbl defined in \eqref{def of the action2} can be rewritten as 
		\begin{equation}\label{def1}
			\phi\circledast \alpha=\phi_{\ast}(\widetilde{\phi}_{\ast}(\alpha)),
		\end{equation}
		where $\alpha\in H^2(C_p,M)$. On the other hand, since $H^2(C_p,M)\cong \bar{M}_1$, the action  \textacutedbl$\bullet$\textgravedbl defined in \eqref{def of the action} can be rewritten as
		\begin{equation}\label{def2}
			\phi\bullet m= \phi\cdot \widetilde{\phi}m=\phi_{\ast}(\widetilde{\phi}_{\ast}(m)),
		\end{equation} 
		where $m\in \bar{M}_1$. Therefore, from \eqref{def1} and \eqref{def2} we see that the action \textacutedbl$\circledast$\textgravedbl defined in \eqref{def of the action2} is equal to the action \textacutedbl$\bullet$\textgravedbl defined in \eqref{def of the action} in this case.
	\end{remark}
	
	\begin{prop}[\cite{Hil86}, Theorem 5.2]\label{Theorem of Mathematical Crystallography}
		There exists a one-to-one correspondence between the isomorphism classes of Bieberbach groups in the  crystal class $(H,M)$  and the orbits of the action of   $\mathcal{N}_{\mathrm{Aut}(M)}(H)$ on $X(H,M)$. 
	\end{prop}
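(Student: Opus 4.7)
The plan is to combine the standard extension-classification bijection (equivalence classes of extensions of $H$ by $M$ correspond to elements of $H^2(H,M)$) with a careful analysis of when two such extensions yield isomorphic groups. First I would restrict to torsion-free extensions via Proposition \ref{Th 2.1, CIII, Cha86}, so Bieberbach groups in $(H,M)$ correspond bijectively to elements of $X(H,M)$ modulo the finer equivalence of \emph{equivalence of extensions}. The task is then to show that collapsing $X(H,M)$ by the coarser relation of group isomorphism is exactly the same as collapsing it by the orbits of $\mathcal{N}_{\mathrm{Aut}(M)}(H)$ under the action \eqref{def of the action2}.

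For the ``isomorphism implies same orbit'' direction I would reuse the argument of Lemma \ref{Iso B gropus implies iso sl iso}: given an isomorphism $F:\Gamma_1\to\Gamma_2$ between two Bieberbach groups in $(H,M)$, uniqueness of the maximal abelian normal subgroup (Proposition \ref{Cha, Prop 4.1}) forces $F(M)=M$, so $F$ restricts to $f\in\mathrm{Aut}(M)$ and induces $\varphi\in\mathrm{Aut}(H)$ with $f(h\cdot m)=\varphi(h)\cdot f(m)$. Identifying $H$ with its image in $\mathrm{Aut}(M)$, this relation reads $fhf^{-1}=\varphi(h)$, so $f\in\mathcal{N}_{\mathrm{Aut}(M)}(H)$. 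Choosing set-theoretic sections of $\Gamma_i\twoheadrightarrow H$ and writing out the defining $2$-cocycles $c_1,c_2$, a direct computation shows that $c_2$ is cohomologous to $f\circ c_1(f^{-1}(-)f,f^{-1}(-)f)$; this is precisely the formula \eqref{def of the action2}, so the classes $\alpha_i\in X(H,M)$ of $\Gamma_i$ satisfy $\alpha_2=f\circledast\alpha_1$.

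For the converse, given $\alpha_2=\phi\circledast\alpha_1$ with $\phi\in\mathcal{N}_{\mathrm{Aut}(M)}(H)$, I would mimic the construction of Lemma \ref{Independence of the CC}: realize $\Gamma_i$ as $M\times H$ with the twisted multiplication coming from a cocycle representative of $\alpha_i$, and define $F(m,h)=(\phi(m),\bar\phi(h))$, where $\bar\phi\in\mathrm{Aut}(H)$ is the automorphism $h\mapsto\phi^{-1}h\phi$ induced by the normalizer condition. The defining identity of the normalizer guarantees $F$ intertwines the $H$-actions on $M$, and the cocycle transformation $\alpha_2=\phi\circledast\alpha_1$ is exactly what is needed to make $F$ a homomorphism; together with the bijectivity of $\phi$ and $\bar\phi$ this gives an isomorphism $\Gamma_1\cong\Gamma_2$. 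Finally one checks that the two assignments are well defined on cohomology classes: changing the section multiplies the cocycle by a coboundary, and a routine verification shows that coboundaries are stable under the $\circledast$-action, so the induced map on $\mathcal{N}_{\mathrm{Aut}(M)}(H)\backslash X(H,M)$ is unambiguous. The main obstacle is the cocycle bookkeeping in the second paragraph: one must verify that the cohomology class of $c_2$ is independent of the section chosen for $\Gamma_2$, and that the resulting formula matches \eqref{def of the action2} on the nose rather than a twisted variant that would spoil the identification with $\mathcal{N}_{\mathrm{Aut}(M)}(H)$-orbits.
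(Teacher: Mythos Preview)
The paper does not supply its own proof of this proposition; it is quoted directly from \cite{Hil86} and used as a black box. There is therefore nothing to compare against, and your sketch is essentially the standard argument behind Hiller's theorem: classify extensions by $H^{2}(H,M)$, cut down to $X(H,M)$ using Proposition~\ref{Th 2.1, CIII, Cha86}, and show that passing from equivalence of extensions to isomorphism of groups amounts exactly to dividing out by the $\mathcal{N}_{\mathrm{Aut}(M)}(H)$-action \eqref{def of the action2}. The use of Proposition~\ref{Cha, Prop 4.1} to force $F(M)=M$ and land $f$ in the normalizer is the key point, and you have it.

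One bookkeeping slip in your converse direction: with the action \eqref{def of the action2} as written, the automorphism of $H$ that makes $F(m,h)=(\phi(m),\bar\phi(h))$ a homomorphism is $\bar\phi(h)=\phi h\phi^{-1}$, not $\phi^{-1}h\phi$. Indeed, one needs $\phi(h\cdot m)=\bar\phi(h)\cdot\phi(m)$, which in $\mathrm{Aut}(M)$ reads $\phi h=\bar\phi(h)\phi$, and one needs $c_2(\bar\phi(h_1),\bar\phi(h_2))=\phi(c_1(h_1,h_2))$, which with $c_2=\phi\circledast c_1$ only holds for this choice of $\bar\phi$. This is exactly the ``cocycle bookkeeping'' you warn about at the end; once the direction is fixed the rest of your argument goes through without change.
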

	
	\begin{lem}\label{acts transit}
		Let $C_p$ be a cyclic group of prime order $p$. Then, $\mathcal{N}_{\mathrm{Aut}(M)}(C_p)$ acts transitively on $H^{2}(C_p,M)^{\ast}$.
	\end{lem}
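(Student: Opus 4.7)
The plan is to identify $H^{2}(C_p,M)^{\ast}$ with $\bar M_1\setminus\{0\}$ via the isomorphism set up just before the lemma, and to exhibit enough elements of $\mathcal{N}_{\mathrm{Aut}(M)}(C_p)$ so that the $\bullet$-action is transitive on $\bar M_1\setminus\{0\}$. The first source of elements is the centralizer $C_{\mathrm{Aut}(M)}(C_p)$: given $\alpha\in\mathrm{Aut}(M_1)=\mathrm{GL}_r(\Z)$ (where $r=\mathrm{rank}_{\Z} M_1$), the map $\phi:=\alpha\oplus\mathrm{id}_{M_2}$ commutes with the $C_p$-action and hence lies in the centralizer with $\widetilde{\phi}=1$. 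For such $\phi$ the formula $n\bullet m=n\cdot\widetilde n\,m$ reduces to $\phi\bullet m=\bar{\alpha}(m)$, so we realise the natural action of the image of the reduction map $\mathrm{GL}_r(\Z)\to\mathrm{GL}_r(\F_p)$ on $\bar M_1\cong\F_p^{r}$.

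This image contains $\mathrm{SL}_r(\F_p)$, since the elementary matrices $E_{ij}(\lambda)$ generate $\mathrm{SL}_r(\F_p)$ and lift to $\mathrm{SL}_r(\Z)$. As $\mathrm{SL}_r(\F_p)$ acts transitively on $\F_p^{r}\setminus\{0\}$ whenever $r\geq 2$, the centralizer alone settles that case. In the remaining case $r=1$, one has $\bar M_1\cong\F_p$, and the centralizer only yields the $\{\pm 1\}$-action; here we invoke the twist $\widetilde{n}$ built into the definition of $\bullet$. Since $M$ is faithful as a $\Z C_p$-module, $M_2\neq 0$, and I would construct, for each $a\in\F_p^{\ast}$, an automorphism $\eta_a\in\mathcal{N}_{\mathrm{Aut}(M)}(C_p)$ with $\widetilde{\eta_a}=a$ and $\eta_a|_{M_1}=\mathrm{id}_{M_1}$, via a Galois-type semi-linear twist on the $\Z[\zeta_p]$-structure of $M_2$. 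Then $\eta_a\bullet m=am$ in $\bar M_1\cong\F_p$, giving scalar multiplication by every element of $\F_p^{\ast}$ and hence transitivity on $\F_p^{\ast}$.

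The main obstacle is the construction of $\eta_a$ in the rank-one case: it requires $M_2$ to admit semi-linear automorphisms realising every Galois element $\sigma_a\colon\zeta_p\mapsto\zeta_p^{a}$. This uses the Diederichsen--Reiner type classification of indecomposable $\Z C_p$-lattices recalled in the Preliminaries (ideals of $\Z[\zeta_p]$ and non-split extensions of $\Z$ by such ideals), together with the action of $\mathrm{Gal}(\Q(\zeta_p)/\Q)\cong\F_p^{\ast}$ on ideal classes; an analysis of the indecomposable summands of $M_2$, combined with the freedom to permute isomorphic summands by any element of $\F_p^{\ast}$, produces the required $\eta_a$ and completes the argument.
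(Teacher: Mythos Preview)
Your split into $r=\mathrm{rank}_{\Z}M_1\ge 2$ versus $r=1$ is different from the paper's, which separates the \emph{exceptional} case ($M_1\cong\Z$ and every indecomposable summand of $M_2$ of $\Z$-rank $p-1$) from the non-exceptional case and dispatches the latter by citing \cite[Chapter~IV, Theorem~6.1]{Cha86}. Your $r\ge 2$ argument via the centralizer and $\mathrm{SL}_r(\F_p)$-transitivity on $\F_p^{\,r}\setminus\{0\}$ is correct and pleasantly self-contained; it handles a large part of the non-exceptional range without invoking Charlap's classification.

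The $r=1$ part, however, has a real gap. An element $\eta\in\mathcal N_{\mathrm{Aut}(M)}(C_p)$ with $\widetilde\eta=a$ exists precisely when $M\cong(M)^{\varphi_a}$ as $\Z C_p$-modules. In the exceptional situation $M=\bigl(\bigoplus_iA_i\bigr)\oplus\Z$ one has $(M)^{\varphi_a}\cong\bigl(\bigoplus_i\sigma_a^{-1}(A_i)\bigr)\oplus\Z$, and by the Steinitz invariant these are $\Z C_p$-isomorphic only when $\sigma_a$ fixes the class $[\prod_iA_i]\in H(\Q(\zeta_p))$. For a class with non-trivial Galois orbit (already present at $p=23$) this fails for some $a$, so your promised $\eta_a$ simply does not lie in $\mathrm{Aut}(M)$. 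The phrase ``freedom to permute isomorphic summands by any element of $\F_p^{\ast}$'' does not help: permutations of summands are centralizer elements and leave $\widetilde\eta$ unchanged, and in the minimal example $M=A\oplus\Z$ there is nothing to permute. Thus the image of $\mathcal N_{\mathrm{Aut}(M)}(C_p)\to\mathrm{Aut}(C_p)$ can be a proper subgroup, and transitivity of the $\bullet$-action on $\F_p^{\ast}$ then depends on how this image interacts with the $\pm 1$ contributed by $\phi|_{M_1}$ --- an analysis you have not supplied. (For what it is worth, the paper's direct argument in the exceptional case writes down the very same Galois map $\phi\colon\bigoplus A_i\oplus\Z\to\bigoplus\varphi(A_i)\oplus\Z$ and asserts $\phi\in\mathcal N_{\mathrm{Aut}(M)}(C_p)$ without addressing this point; the remaining non-exceptional $r=1$ case it delegates to Charlap.)
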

	
	To prove this, we will need the following definition:

	\begin{defn}\label{exceptional module}
		Let $C_p$ be a cyclic group of prime order $p$. A finitely generated  $\Z C_p$-module $M$ is exceptional if all indecomposable summands of $M$ have $\Z$-rank $p-1$ except one trivial summand of $\Z$-rank $1$.	
	\end{defn}
	
	\begin{proof}[Proof of Lemma \ref{acts transit}]
		Suppose that $M$ is an exceptional $\Z C_p$-module, i.e.,  $M=\bigoplus_{i=1}^{r}A_i\oplus \Z$ where $A_i$ are ideals of $\Z[\zeta_p]$. For each $\varphi\in \mathrm{Gal}(\zeta_p)$, consider the map $\phi: \bigoplus_{i=1}^{r}A_i\oplus \Z\rightarrow \bigoplus_{i=1}^{r}\varphi (A_i)\oplus \Z$ defined by $$a_1\oplus\cdots\oplus a_r\oplus u\mapsto \varphi(a_1)\oplus\cdots\oplus \varphi(a_r)\oplus u,$$ for $a_i\in A_i$ and $u\in \Z$. Recall that $\mathrm{Gal}(\zeta_p)\cong\mathrm{Aut}(C_p)$; every  $\varphi\in \mathrm{Aut}(C_p)$ is of the form $\varphi(x)=x^{k}$  where $x$ is a generator of $C_p$ and $k$ is some integer between $0$ and $p$. Thus, 
		\begin{equation*}
			\varphi(x\cdot a_i)=\varphi(\zeta_pa_i)=\varphi(\zeta_p)\varphi(a_i)=\varphi(x)\cdot\varphi(a_i), 
		\end{equation*}
		for $i=1,\cdots,r$. Since $\Z$ is a trivial $\Z C_p$-module, we have $\phi(x\cdot m)=\varphi(x)\cdot \phi(m)$ for each $m\in M$.  Hence, $\phi \in \mathcal{N}_{\mathrm{Aut}(M)}(C_p)$. As $H^{2}(C_p,M)=H^{2}(C_p,\Z)\cong \bar{\Z}$, we have $$\phi \bullet m= \widetilde{\phi}m = km,$$ where $m \in \bar{\Z}$ and $\widetilde{\phi}=k$. Therefore, by construction we have $\mathcal{N}_{\mathrm{Aut}(M)}(C_p)$ acts transitively on $H^{2}(C_p,M)^{\ast}$ in this case.
		
		Now, if $M$ is a non-exceptional $\Z C_p$-module, then $\mathcal{N}_{\mathrm{Aut}(M)}(C_p)$ also acts transitively on $H^{2}(C_p,M)^{\ast}$ by \cite[Theorem 6.1, p. 140]{Cha86}.
	\end{proof}
	
	Let $p$ be a prime number. Let $H_p$ denote the $p$-primary component of the group $H$.
	
	\begin{prop}[\cite{Brown82}, p. 84]\label{p-primary component}
		Let $H$ be a finite group and $P$ a normal $p$-Sylow subgroup of $H$. Then,
		\begin{enumerate}
			\item[(i)] $H^n(H,M)= \bigoplus_{p\mid |H|}H^{n}(H,M)_{p}$ for all $n>0$.
			\item[(ii)]  $H^{n}(H,M)_{p}\cong H^{n}(P,M)^{G/P}$ for all $n>0$.
		\end{enumerate}
	\end{prop}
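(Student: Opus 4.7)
The plan is to handle the two parts separately. For part (i), since $H$ is a finite group and $M$ is a finitely generated $\Z H$-module, the cohomology group $H^{n}(H,M)$ is finite for every $n>0$; moreover, it is annihilated by $|H|$ via the corestriction-restriction composition. Any finite abelian group decomposes canonically as the direct sum of its $p$-primary components, and this decomposition is functorial, which immediately yields $H^{n}(H,M)=\bigoplus_{p\mid |H|}H^{n}(H,M)_{p}$.

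For part (ii), the main tool is the Lyndon--Hochschild--Serre spectral sequence associated to the extension $1\to P\to H\to H/P\to 1$:
\begin{equation*}
E_{2}^{r,q}=H^{r}(H/P,H^{q}(P,M))\Longrightarrow H^{r+q}(H,M).
\end{equation*}
Two arithmetic observations do most of the work. First, because $P$ is a $p$-Sylow subgroup, $|P|$ is a power of $p$ while $|H/P|$ is coprime to $p$. Second, for every $q>0$ the group $H^{q}(P,M)$ is annihilated by $|P|$, hence is $p$-primary, and for every $r>0$ the functor $H^{r}(H/P,-)$ is annihilated by $|H/P|$. Combining these, $E_{2}^{r,q}=0$ whenever $r>0$ and $q>0$. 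So on the $n$-th antidiagonal only $E_{2}^{0,n}=H^{n}(P,M)^{H/P}$ and $E_{2}^{n,0}=H^{n}(H/P,M^{P})$ survive, and all higher differentials vanish for degree reasons.

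To conclude, I would pass to $p$-primary components, exploiting that this is an exact functor on finite abelian groups and compatible with the filtration given by the spectral sequence. The term $E_{2}^{n,0}=H^{n}(H/P,M^{P})$ is killed by $|H/P|$, which is prime to $p$, so its $p$-primary part vanishes. On the other hand $E_{2}^{0,n}=H^{n}(P,M)^{H/P}$ is already entirely $p$-primary because $H^{n}(P,M)$ is a $p$-group for $n>0$. Hence the filtration of $H^{n}(H,M)_{p}$ has only one nonzero quotient, namely $H^{n}(P,M)^{H/P}$, giving the desired isomorphism.

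The main obstacle, and the step I expect to require the most care, is justifying the passage to $p$-primary parts inside the spectral sequence, i.e.\ verifying that the $p$-primary component of $H^{n}(H,M)$ may be computed by applying $(-)_{p}$ levelwise to the $E_{2}$ page and then reading off the abutment. This reduces to the functoriality of the $p$-primary decomposition together with exactness of $(-)_{p}$ on finite abelian groups, but it is the one point where the standard Brown reference is doing nontrivial work behind the scenes.
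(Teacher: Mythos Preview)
The paper does not prove this proposition; it is quoted from Brown, \emph{Cohomology of Groups}, p.~84, without argument. Your proof is essentially correct but follows a genuinely different route from Brown's.

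Brown's argument avoids spectral sequences entirely and works directly with restriction and corestriction. Since $\mathrm{cor}\circ\mathrm{res}=[H:P]$ is prime to $p$, restriction is injective on $H^{n}(H,M)_{p}$; its image lies in the $H/P$-invariants because inner automorphisms of $H$ act trivially on $H^{n}(H,M)$; and surjectivity onto $H^{n}(P,M)^{H/P}$ follows from the double coset formula, which for normal $P$ reads $\mathrm{res}\circ\mathrm{cor}=\sum_{g\in H/P}g^{*}$. This produces an explicit isomorphism (namely $\mathrm{res}$) and needs nothing beyond the transfer.

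Your Lyndon--Hochschild--Serre approach also works, but one phrase is inaccurate. The transgressions $d_{q+1}\colon E_{q+1}^{0,q}\to E_{q+1}^{q+1,0}$ are \emph{not} zero ``for degree reasons'': nothing about the bigrading alone forbids them, and they are precisely the differentials linking the two surviving edge lines. They vanish because the source is $p$-primary (a subquotient of $H^{q}(P,M)$ for $q>0$) while the target is annihilated by $|H/P|$ and hence has trivial $p$-part, so any homomorphism between them is zero. You supply exactly this observation in your final two paragraphs, so once you delete the misleading phrase the argument is complete. Brown's approach buys an explicit map and lighter prerequisites; yours is more structural and makes the splitting $H^{n}(H,M)\cong H^{n}(P,M)^{H/P}\oplus H^{n}(H/P,M^{P})$ visible as a byproduct.
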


	In particular, if $G$ is a cyclic group of square-free order, then  $X(G,M)=\prod_{p\mid|G|} (\bar{M}_{1,p}^{\ast})^{G/C_p}=\prod_{p\mid|G|} (\bar{M}_{1,p}^{\ast})^{G}$ where $M_{1,p}$ is the largest direct summand of $M$ on which $C_p$ acts trivially (see Proposition \ref{Th 2.1, CIII, Cha86}).
	
	\begin{lem}\label{orbit of N_{GL}(G)}
		Let $M$ be a $\Z G$-lattice for a cyclic group $G$ of square-free order $\delta$. Then, 
		\begin{equation}\label{formula1}
			|\mathcal{N}_{\mathrm{Aut}(M)}(G)\backslash X(G,M)|=\prod_{p\mid\delta}|\mathcal{N}_{\mathrm{Aut}(M)}(G)\backslash (\bar{M}_{1,p}^{\ast})^{G}|,
		\end{equation}
		where  $M_{1,p}$ is the largest direct summand of $M$ on which $C_p$ acts trivially. 
	\end{lem}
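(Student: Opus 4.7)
The plan is to show that the natural projection map
\[
\pi \colon \mathcal{N}_{\mathrm{Aut}(M)}(G)\backslash X(G,M) \;\longrightarrow\; \prod_{p\mid\delta}\mathcal{N}_{\mathrm{Aut}(M)}(G)\backslash (\bar M_{1,p}^{\ast})^{G}
\]
induced by the identification $X(G,M)=\prod_{p\mid\delta}(\bar M_{1,p}^{\ast})^{G}$ (which follows from Proposition~\ref{p-primary component} combined with $H^{2}(C_p,M)\cong \bar{M}_{1,p}$ and Proposition~\ref{Th 2.1, CIII, Cha86}) is a bijection. Surjectivity is automatic, so the real content of \eqref{formula1} is the injectivity of $\pi$.

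Injectivity amounts to the following claim: given two tuples $(m_p)_{p\mid\delta}$ and $(m'_p)_{p\mid\delta}$ in $X(G,M)$ such that for every prime $p\mid\delta$ there is some $\phi_p\in\mathcal{N}_{\mathrm{Aut}(M)}(G)$ with $\phi_p\bullet m_p = m'_p$, we must produce a single $\phi\in\mathcal{N}_{\mathrm{Aut}(M)}(G)$ satisfying $\phi\bullet m_p=m'_p$ for all $p$ simultaneously. My strategy is to build, for each prime $p\mid\delta$ separately, an element $\phi^{(p)}\in\mathcal{N}_{\mathrm{Aut}(M)}(G)$ that
(a) realizes the prescribed orbit move on the $p$-th factor, and
(b) fixes the $q$-th coordinate for every prime $q\mid\delta$ with $q\neq p$.
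The element $\phi:=\phi^{(p_1)}\phi^{(p_2)}\cdots\phi^{(p_k)}$ will then do the job, since by (b) the individual factors commute on each coordinate of the tuple.

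To build $\phi^{(p)}$, I exploit the direct product decomposition $\mathrm{Aut}(G)\cong \prod_{q\mid\delta} \mathrm{Aut}(C_q)\cong\prod_{q\mid\delta}\F_q^{\ast}$ coming from Proposition~\ref{Cor 27 of DF04}, which allows me to prescribe the semi-linear part $\widetilde{\phi^{(p)}}\in\mathrm{Aut}(G)$ to be arbitrary at the $p$-th coordinate and trivial at every other coordinate. By formula~\eqref{def of the action}, the $\bullet$-action on $\bar M_{1,q}$ then takes the shape $\phi^{(p)}\bullet m=\phi^{(p)}\cdot \widetilde{\phi^{(p)}}_q\, m=\phi^{(p)}\cdot m$ for $q\neq p$. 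The existence of an actual $\mathrm{Aut}(M)$-lift realizing this semi-linear data and which is additionally trivial on $(\bar M_{1,q}^{\ast})^{G}$ for $q\neq p$ follows the Galois-theoretic pattern of the proof of Lemma~\ref{acts transit}: using the Oppenheim block decomposition $\mathcal{M}_d$ of $M$ and the inclusions $\mathrm{Gal}(\zeta_p)\hookrightarrow\mathrm{Gal}(\zeta_\delta)=\prod_q\mathrm{Gal}(\zeta_q)$, one produces a normalizer element whose restriction to $\mathrm{Gal}(\zeta_d)$ is trivial whenever $p\nmid d$, and a direct computation in the spirit of Lemma~\ref{acts transit} shows that the surviving action on $(\bar M_{1,q}^{\ast})^{G}$ (which by the analysis $g\cdot m=\zeta_d m$ of the $G$-action on $\bar{\mathcal M}_d$ only involves the $\mathcal{M}_1$-block for $q\neq p$) is trivial. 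Finally, the transitivity portion of Lemma~\ref{acts transit} applied to $\mathcal{N}_{\mathrm{Aut}(M)}(C_p)$, combined with the semi-linear freedom at the $p$-th coordinate, yields property~(a).

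The main obstacle will be the extension-theoretic subtlety: by Proposition~\ref{Th 4.1 and Cor 4.8 of Opp26}, $M$ is not literally a direct sum $\bigoplus_{d\mid\delta}\mathcal{M}_d$ but only an iterated extension controlled by $\Lambda(s,t)$, so one has to verify that the Galois lifts assemble into a well-defined automorphism of $M$ itself (rather than merely of an associated graded of the filtration). Once this compatibility is checked, conditions (a) and (b) hold, yielding the injectivity of $\pi$ and hence the identity~\eqref{formula1}.
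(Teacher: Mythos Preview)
Your approach differs completely from the paper's: the paper disposes of the lemma in one sentence, asserting that the identity follows immediately from the primary decomposition of Proposition~\ref{p-primary component} together with the containment $\mathcal{N}_{\mathrm{Aut}(M)}(G)\leq\mathcal{N}_{\mathrm{Aut}(M)}(C_p)$, whereas you attempt a direct bijectivity argument for the projection $\pi$.

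There is a genuine gap in your construction of $\phi^{(p)}$, and it lies earlier than the extension-theoretic subtlety you flag at the end. The Galois-type element you build has semi-linear part concentrated in $\mathrm{Gal}(\zeta_p)\subset\mathrm{Gal}(\zeta_\delta)$ and hence restricts trivially to $\mathrm{Gal}(\zeta_d)$ whenever $p\nmid d$; in particular its \emph{linear} part is the identity on every block $\mathcal{M}_d$ with $p\nmid d$. But the $C_p$-trivial summand $M_{1,p}$ is (up to the extension data) precisely $\bigoplus_{p\nmid d}\mathcal{M}_d$, so your $\phi^{(p)}$ has trivial linear part on all of $\bar M_{1,p}$, and by \eqref{def of the action} its $\bullet$-action on $(\bar M_{1,p}^\ast)^G$ collapses to scalar multiplication by $\widetilde{\phi^{(p)}}\in\F_p^\ast$. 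That secures property~(b), but it forces $\phi^{(p)}$ to realize only the scalar moves $m_p\mapsto\lambda m_p$ on the $p$-th factor. The given $\phi_p\in\mathcal{N}_{\mathrm{Aut}(M)}(G)$, by contrast, may carry a nontrivial $\mathrm{GL}_{r(1)}(\Z)$-component on the trivial block $\mathcal{M}_1$ and thus induce a genuinely non-scalar move $m_p\mapsto m'_p$; your $\phi^{(p)}$ cannot reproduce it, so~(a) fails. Invoking Lemma~\ref{acts transit} does not help: that lemma yields elements of $\mathcal{N}_{\mathrm{Aut}(M)}(C_p)$, not of $\mathcal{N}_{\mathrm{Aut}(M)}(G)$, and in any case does not stay within the class of automorphisms satisfying~(b). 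To rescue the strategy you would need to allow $\phi^{(p)}$ a nontrivial $\mathcal{M}_1$-component lying in the congruence subgroup $\{A\in\mathrm{GL}_{r(1)}(\Z):A\equiv I\pmod q\ \text{for all primes}\ q\mid\delta,\ q\neq p\}$ while reducing to the prescribed move modulo $p$, and then verify that such an element extends through the Oppenheim filtration to an automorphism of $M$ normalizing $G$; neither of these steps is carried out, and the first already runs into the failure of surjectivity of $\mathrm{GL}_n(\Z)\to\mathrm{GL}_n(\Z/m)$.
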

	\begin{proof}
		Since $\mathcal{N}_{\mathrm{Aut}(M)}(G)$ is a subgroup of $\mathcal{N}_{\mathrm{Aut}(M)}(C_p)$ for each $p\mid \delta$, the result follows immediately from Proposition \ref{p-primary component}.
	\end{proof}
	
	\section{Profinite genus}\label{Profinite genus}

	\subsection{Bieberbach groups with the cyclic holonomy group of square-free order}
	In this section we shall determine a full set of isomorphism invariants for the profinite completion of an $n$-dimensional Bieberbach group with the cyclic holonomy group of square-free order and also for an arithmetic crystal class.
	
	\begin{lem}\label{characterization-Bieberbach group with n(G)=1}
		Let $\Gamma$ be an $n$-dimensional Bieberbach group with cyclic holonomy group $G$ of square-free order $\delta$ and  $M$ its maximal abelian normal subgroup. Then, $M=M_{n-1}\oplus \Z$ admits a $\Z G$-decomposition, where $\Z$ is trivial module generated by the $\delta$-th power of some element $c$ of $\Gamma$ and $\Gamma=M_{n-1}\rtimes C$ with $C=\langle c\rangle$.
	\end{lem}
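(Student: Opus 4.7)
The plan is to start with an arbitrary preimage of a generator of $G$, adjust it so that its $\delta$-th power is a primitive $G$-fixed element of $M$, and then split off the resulting rank-one trivial $\Z G$-summand.

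First, fix a generator $g \in G$ and any preimage $c_0 \in \Gamma$. Since $\Gamma$ is torsion-free, $c_0$ has infinite order, so $v_0 := c_0^\delta$ is a non-zero element of $M$. Conjugation by $c_0$ on $M$ is the $g$-action, and $c_0$ commutes with $c_0^\delta$, so $g \cdot v_0 = v_0$, i.e.\ $v_0 \in M^G$. A quick computation shows that replacing $c_0$ by $m c_0$ (with $m \in M$) changes $v_0$ into $v_0 + N_G(m)$, where $N_G = 1 + g + \cdots + g^{\delta-1}$. Hence the class $[v_0] \in M^G/N_G(M) \cong H^2(G,M)$ is intrinsic to $\Gamma$ and equals its extension class.

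Next, I would show that $c$ can be chosen so that $v := c^\delta$ is primitive in $M$, meaning $v \notin qM$ for every prime $q$. For each $p \mid \delta$, the Bieberbach hypothesis (Proposition \ref{Th 2.1, CIII, Cha86}) forces $\mathrm{res}^2_{C_p}([v_0]) \neq 0$ in $H^2(C_p,M) \cong \bar M_{1,p}$, which unpacked says that the projection of $v_0$ onto the trivial $C_p$-summand $M_{1,p}$ is not in $p M_{1,p}$; since $M = M_{1,p} \oplus M_{2,p}$ as $\Z C_p$-modules, this automatically gives $v_0 \notin pM$. For each of the finitely many primes $q \nmid \delta$ with $v_0 \in qM$, I would use that $N_G$ acts on $M^G$ as multiplication by $\delta$ and that $\gcd(\delta,q) = 1$ to find (by the Chinese Remainder Theorem applied coordinate-wise over $\prod_q \Z/q\Z$) an element $m \in M^G$ with $v_0 + \delta m$ primitive; since $\delta m \in pM^G$ for each $p \mid \delta$, this correction preserves the non-divisibility at the primes dividing $\delta$.

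With $v \in M^G$ primitive, $M/\Z v$ is $\Z$-torsion-free, hence a $\Z G$-lattice. The crucial remaining step is that the short exact sequence $0 \to \Z v \to M \to M/\Z v \to 0$ splits as $\Z G$-modules, producing the desired decomposition $M = M_{n-1} \oplus \Z v$. For this I would appeal to Oppenheim's classification (Proposition \ref{Theorem 4.13 of Opp26}) together with the $\mathrm{Ext}$-description of Proposition \ref{Th 4.1 and Cor 4.8 of Opp26} and Lemma \ref{lem 4.4 of Opp26}, arguing that the class of this extension in $\mathrm{Ext}^1_{\Z G}(M/\Z v, \Z v)$ vanishes. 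This is the main obstacle: although the full trivial $\Z G$-summand $\mathcal{M}_1$ of $M$ need not split off from $M$ in general (the Ext contributions from $L(s,1)$ with $s$ a single prime can be non-zero), one has to exploit that $v$ is the specific rank-one element picked out by the Bieberbach cohomology to show $\Z v$ alone admits a $\Z G$-complement.

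Finally, given $M = M_{n-1} \oplus \Z v$ with $v = c^\delta \in \langle c \rangle$, it follows that $\Gamma = M \langle c \rangle = M_{n-1} \langle c \rangle$ and $M_{n-1} \cap \langle c \rangle \subseteq M_{n-1} \cap \Z v = 0$. Since $M_{n-1}$ is $G$-invariant, it is normal in $\Gamma$, and setting $C := \langle c \rangle$ yields $\Gamma = M_{n-1} \rtimes C$.
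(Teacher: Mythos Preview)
Your approach is genuinely different from the paper's, and the comparison is instructive. The paper's proof is two lines: it invokes \cite[Theorem~2]{Szc97}, which directly produces a surjection $\Gamma\twoheadrightarrow\Z$ with kernel $M_{n-1}$; since $\Z$ is free, the sequence splits as $\Gamma=M_{n-1}\rtimes C$, and the $\Z G$-decomposition $M=M_{n-1}\oplus\langle c^{\delta}\rangle$ follows by intersecting $M$ with the two factors. In contrast you try to build $c$ from scratch by lifting a generator of $G$, adjusting so that $v=c^{\delta}$ is primitive, and then splitting $\Z v$ off $M$ as a $\Z G$-summand. Your first two steps are fine: the argument that $v_0\notin pM$ for $p\mid\delta$ via the restriction to $H^{2}(C_p,M)$ is correct, and the CRT adjustment at primes $q\nmid\delta$ can be made to work.

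The gap is exactly where you flag it. Showing that $0\to\Z v\to M\to M/\Z v\to 0$ splits over $\Z G$ is the whole content of the lemma, and your proposed tool does not apply as stated: Oppenheim's $\mathrm{Ext}$-description (Proposition~\ref{Th 4.1 and Cor 4.8 of Opp26}) computes $\mathrm{Ext}^{1}_{\Z G}(M_1,M_0)$ for the \emph{canonical} decomposition where $M_0$ is the full $D_0$-part of $M$ and $M_1=M/M_0$ is the $D_1$-part. Your sequence has $M_0=\Z v$ sitting only in the $d=1$ piece, while $M/\Z v$ still carries components in both $D_0$ and $D_1$, so it is not of Oppenheim's shape. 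Merely knowing that $v$ is primitive and has non-trivial image in each $H^{2}(C_p,M)$ is not obviously enough to force the Ext class to vanish; one would need a further, more delicate choice of the adjustment $m\in M$ (not just $m\in M^{G}$) to land on a $v$ that actually generates a $\Z G$-direct summand. Carrying this out amounts to reproving Szczepa\'nski's decomposition theorem by hand. If you want a self-contained argument, the cleanest route is to imitate \cite{Szc97}: produce a surjection $\Gamma\to\Z$ (equivalently, show the abelianization of $\Gamma$ has positive free rank, which for cyclic holonomy follows from $H_0(G,M)\cong M_G$ having a $\Z$-summand) and then split; this bypasses the Ext obstruction entirely.
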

	\begin{proof}
		By \cite[Theorem 2]{Szc97} we have the following exact sequence
		\begin{equation*}
			1\rightarrow M_{n-1}\rightarrow \Gamma\rightarrow \Z \rightarrow 1.
		\end{equation*} 
		Since $\Z$ is free, this sequence splits as a semidirect product $\Gamma=M_{n-1}\rtimes C$.
	\end{proof}
	
	\begin{lem}\label{Invariants_crystal class}
		Let $\Gamma_1$ and $\Gamma_2$ be $n$-dimensional Bieberbach groups with the cyclic holonomy group $G$ of square-free order $\delta$ and maximal abelian normal subgroups $M$ and $N$, respectively. Then, $\Gamma_1$ and $\Gamma_2$ are in the same arithmetic crystal class if and only if
		\begin{enumerate}
			\item[(i)] $r(d,M)=r(d,N)$ for each $d\mid \delta$.
			\item[(ii)] $\rho_k(\lambda(s,t,M))=\rho_k(\lambda(s,t,N))$ for each $k=1,\cdots,v$ and $(s,t)\in D_{1}^{\ast}$.
			\item[(iii)] $\sigma\cdotp [I_{\mathcal{M}_d}]=[J_{\mathcal{N}_d}]$ for each $d\mid \delta$ and for some $\sigma\in \mathrm{Gal}(\zeta_{\delta})$.
		\end{enumerate}
	\end{lem}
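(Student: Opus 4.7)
My plan is to reduce the statement to a direct application of Proposition \ref{Invariants_Semi-Linear Iso}, after recognizing that ``same arithmetic crystal class'' for two Bieberbach groups with translation lattices $M$ and $N$ is literally the condition that $M$ and $N$ are semi-linearly isomorphic as $\Z G$-lattices.

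First, I would unpack the definitions. By the definition of arithmetic equivalence given immediately before Lemma \ref{Iso B gropus implies iso sl iso}, the crystal classes $(G,M)$ and $(G,N)$ are arithmetically equivalent if and only if there exist group isomorphisms $\varphi : G \to G$ and $f : M \to N$ satisfying $f(g\cdot m)=\varphi(g)\cdot f(m)$ for all $g\in G$ and $m\in M$. This is precisely the definition of a semi-linear isomorphism from the $\Z G$-lattice $M$ to the $\Z G$-lattice $N$. Thus $\Gamma_1$ and $\Gamma_2$ lie in the same arithmetic crystal class if and only if $M$ and $N$ are semi-linearly isomorphic $\Z G$-lattices.

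Second, having made this identification, I would simply invoke Proposition \ref{Invariants_Semi-Linear Iso}, which, for a cyclic group $G$ of square-free order $\delta$, characterizes the semi-linear isomorphism of two $\Z G$-lattices by exactly the three numerical/class invariants (i), (ii), (iii) that appear in the statement. This gives both implications at once.

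The only subtlety is making sure that the crystal class of $\Gamma_i$ is intrinsically attached to its translation lattice $M$ or $N$ and is independent of any choice of representative or of the 2-cocycle defining the extension; but this is precisely the content of Lemma \ref{Iso B gropus implies iso sl iso} together with Lemma \ref{Independence of the CC} (summarized in Remark \ref{remark-Independence of the CC}). Once that is cited, there is no further difficulty, and the lemma follows as an immediate corollary of Proposition \ref{Invariants_Semi-Linear Iso}.
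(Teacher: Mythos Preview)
Your proposal is correct and follows exactly the paper's approach: the paper's proof consists of the single sentence ``This is a consequence of Proposition \ref{Invariants_Semi-Linear Iso},'' and you have simply unpacked why---namely, that being in the same arithmetic crystal class is, by definition, semi-linear isomorphism of the translation lattices. Your added remark about Lemmas \ref{Iso B gropus implies iso sl iso} and \ref{Independence of the CC} is harmless extra care but not strictly needed here.
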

	\begin{proof}
		This is a consequence of Proposition \ref{Invariants_Semi-Linear Iso}.
	\end{proof}

	\begin{prop}\label{abstract invariants}
		Let $\Gamma_1$ and $\Gamma_2$ be $n$-dimensional Bieberbach groups with the cyclic holonomy group $G$ of order $\delta=6,10,14$, or $\delta$ is prime and maximal abelian normal subgroups $M$ and $N$, respectively. Then, $\Gamma_1\cong\Gamma_2$ if and only if
		\begin{enumerate}
			\item[(i)] $r(d,M)=r(d,N)$ for each $d\mid \delta$.
			\item[(ii)] $\rho_k(\lambda(s,t,M))=\rho_k(\lambda(s,t,N))$ for each $k=1,\cdots,v$ and $(s,t)\in D_{1}^{\ast}$.
			\item[(iii)] $\sigma\cdotp [I_{\mathcal{M}_d}]=[J_{\mathcal{N}_d}]$ for each $d\mid \delta$ and for some $\sigma\in \mathrm{Gal}(\zeta_{\delta})$.
		\end{enumerate}
	\end{prop}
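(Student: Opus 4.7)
My plan is to split the proposition into its two implications and to reduce the non-trivial direction to the Wiegand direct-sum cancellation theorem (Proposition \ref{Prop 5.1 of Wie84}), which is exactly where the hypothesis on $\delta$ enters. The forward direction is immediate: if $\Gamma_1\cong \Gamma_2$, then by Lemma \ref{Iso B gropus implies iso sl iso} the two groups lie in the same arithmetic crystal class, and by Lemma \ref{Invariants_crystal class} this is precisely the content of conditions (i)--(iii).

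For the converse, assume (i)--(iii). First I would apply Lemma \ref{Invariants_crystal class} to conclude that $\Gamma_1$ and $\Gamma_2$ belong to the same arithmetic crystal class. Using Lemma \ref{Independence of the CC} (as summarised in Remark \ref{remark-Independence of the CC}), I may replace $\Gamma_2$ by an isomorphic Bieberbach group in the same crystal class $(G,M)$ as $\Gamma_1$, so that both groups are now extensions of $G$ by a single $\Z G$-lattice $M$. Then I would invoke Lemma \ref{characterization-Bieberbach group with n(G)=1} for each $\Gamma_i$ to choose $c_i\in\Gamma_i$ whose image generates $G$ and for which $c_i^{\delta}$ spans a trivial $\Z$-summand of $M$, producing decompositions
$$\Gamma_i=M_{n-1}^{(i)}\rtimes C_i,\qquad M=M_{n-1}^{(i)}\oplus \Z c_i^{\delta},\qquad C_i=\langle c_i\rangle.$$

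The decisive step is cancellation. From $M_{n-1}^{(1)}\oplus \Z \cong M\cong M_{n-1}^{(2)}\oplus\Z$, together with the hypothesis that $\delta$ is prime or $\delta\in\{6,10,14\}$, Proposition \ref{Prop 5.1 of Wie84} supplies a $\Z G$-module isomorphism $f\colon M_{n-1}^{(1)}\to M_{n-1}^{(2)}$. Because the action of $c_i$ on $M_{n-1}^{(i)}$ is by construction the underlying $G$-action inherited from the extension, the assignment $F(m c_1^{k}):=f(m)c_2^{k}$ is well-defined and respects the two semidirect product multiplications, yielding the desired group isomorphism $F\colon\Gamma_1\to\Gamma_2$.

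The hard part will be the cancellation step, and this is where the hypothesis on $\delta$ is indispensable: for a general square-free composite $\delta$ one can have $M_{n-1}^{(1)}\not\cong M_{n-1}^{(2)}$ as $\Z G$-modules even when $M$ is fixed, and the construction of $F$ breaks down. This phenomenon is exactly what Theorem \ref{main Theorem} corrects in the general square-free case, by counting the additional $\mathcal{N}_{\mathrm{Aut}(M)}(G)$-orbits on $X(G,M)$ that can fail to collapse without cancellation.
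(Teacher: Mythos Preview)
Your proposal follows essentially the same route as the paper: the forward direction via Lemma \ref{Iso B gropus implies iso sl iso}, and the converse via the splitting of Lemma \ref{characterization-Bieberbach group with n(G)=1} followed by Wiegand cancellation (Proposition \ref{Prop 5.1 of Wie84}) and an explicit isomorphism of the two semidirect products. The one genuine difference is that you first transport $\Gamma_2$ into the crystal class $(G,M)$ via Lemma \ref{Independence of the CC} and then cancel two trivial summands of the \emph{same} module $M$, so that your $f\colon M_{n-1}^{(1)}\to M_{n-1}^{(2)}$ is a $\Z G$-linear isomorphism; the paper instead keeps $M$ and $N$ distinct, uses Proposition \ref{Invariants_Semi-Linear Iso} to get a \emph{semi-linear} isomorphism $(f_0,\varphi)\colon M\to N$, cancels to obtain a semi-linear $(f,\varphi)\colon M_{n-1}\to N_{n-1}$, and then sets $F(m,c)=(f(m),\Theta(c))$ with $\Theta\colon C_1\to C_2$ ``induced by $\varphi$''.

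This difference matters in the last step. For your map $F(mc_1^{k})=f(m)c_2^{k}$ to be multiplicative one needs $f(g_1\cdot m)=g_2\cdot f(m)$, where $g_i=\pi_i(c_i)$; since your $f$ is $\Z G$-linear this says $g_1\cdot f(m)=g_2\cdot f(m)$, and faithfulness of $G$ on $M_{n-1}^{(2)}$ forces $g_1=g_2$. Lemma \ref{characterization-Bieberbach group with n(G)=1} gives no control over $\pi_i(c_i)$, so this equality is not justified. The paper's retention of the twisting automorphism $\varphi$ is precisely what absorbs the discrepancy between $g_1$ and $g_2$ when checking that $F$ is a homomorphism. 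Your argument can be repaired by keeping track of the semi-linear automorphism coming from Lemma \ref{Independence of the CC} rather than discarding it, or equivalently by not performing that reduction at all and working, as the paper does, directly with the semi-linear isomorphism between $M_{n-1}$ and $N_{n-1}$.
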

	\begin{proof}
		If $\Gamma_1\cong\Gamma_2$, then $M$ is semi-linearly isomorphic to $N$ by Lemma \ref{Iso B gropus implies iso sl iso}. Therefore, it follows from Proposition \ref{Invariants_Semi-Linear Iso} that the conditions (i)--(iii) are fulfilled.
		
		Conversely, let $M$ and $N$ be $\Z G$-lattices  satisfying the conditions (i)--(iii). By Lemma \ref{characterization-Bieberbach group with n(G)=1}, $M=M_{n-1}\oplus \Z$ and $N=N_{n-1}\oplus \Z'$ such that $\Gamma_1=M_{n-1}\rtimes C_1$ and $\Gamma_2=N_{n-1}\rtimes C_2$ where $C_1, C_2$ contain $\Z$ and $\Z'$ as subgroups of index $\delta$ and acts on $M_{n-1}, N_{n-1}$ as $G$. Since the conditions (i)--(iii) are satisfied for $M_{n-1}\oplus \Z$ and $N_{n-1}\oplus \Z'$, it follows from Proposition \ref{Invariants_Semi-Linear Iso} that $M_{n-1}\oplus \Z$ is semi-linearly isomorphic to $N_{n-1}\oplus \Z'$. Hence, there is a semi-linearly isomorphism $(f,\varphi)$ from $M_{n-1}$ to $N_{n-1}$ by Proposition \ref{Prop 5.1 of Wie84}. Let $\Theta:C_1\rightarrow C_2$ be the isomorphism induced by $\varphi: G\rightarrow G$. Now, consider the map $F: M_{n-1}\rtimes C_1\rightarrow N_{n-1}\rtimes C_2$ given by $$F(m,c)=(f(m),\Theta(c)),$$ for $m\in M_{n-1}$ and $c\in C_1$.  We have $F$ is a group homomorphism, because  $(f,\varphi)$ is a semi-linearly homomorphism from $M_{n-1}$ to $N_{n-1}$. Since $F$ is clearly bijective, it is an isomorphism, as we wanted.	
	\end{proof}
	
	\begin{lem}[\cite{Nery}, Lemma 3.2]\label{diagram}
		Let $\Gamma_1$ and $\Gamma_2$ be $n$-dimensional Bieberbach groups with  maximal abelian normal subgroups $M_1$ and $M_2$ and  holonomy groups $G_1$ and $G_2$, respectively. If $\psi:\widehat{\Gamma}_1\rightarrow\widehat{\Gamma}_2$ is an isomorphism, then there are isomorphisms $\phi:\widehat{M}_1\rightarrow \widehat{M}_2$ and $\varphi: G_1\rightarrow G_2$ such that the following diagram commutes:
		\begin{equation*}\label{diagram 1}
			\begin{CD}
				1 @>{}>> \widehat{M}_1 @>{}>> \widehat{\Gamma}_1 @>{}>> G_1 @>>> 1 \\
				@. @VV{\phi}V @VV{\psi}V @VV{\varphi}V\\
				1 @>{}>> \widehat{M}_2 @>{}>> \widehat{\Gamma}_{2} @>{}>> G_2 @>>> 1.
			\end{CD}
		\end{equation*}
	\end{lem}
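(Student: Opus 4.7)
The plan is to reduce everything to showing $\psi(\widehat{M}_1) = \widehat{M}_2$. Granting this, the restriction $\phi := \psi|_{\widehat{M}_1}\colon \widehat{M}_1 \to \widehat{M}_2$ is automatically an isomorphism, and $\psi$ descends to an isomorphism $\varphi\colon G_1 = \widehat{\Gamma}_1/\widehat{M}_1 \to \widehat{\Gamma}_2/\widehat{M}_2 = G_2$ that makes the diagram commute by construction. Thus the whole content of the lemma is an intrinsic characterization of $\widehat{M}_i$ inside $\widehat{\Gamma}_i$.

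To prove $\psi(\widehat{M}_1) = \widehat{M}_2$, set $A := \psi(\widehat{M}_1)$, a closed normal abelian subgroup of $\widehat{\Gamma}_2$ of finite index $|G_1|$. I will show $A \subseteq \widehat{M}_2$; applying the same argument to $\psi^{-1}$ will yield the reverse inclusion. Two preliminary facts I plan to use are: (a) the conjugation action of $G_2 = \widehat{\Gamma}_2/\widehat{M}_2$ on $\widehat{M}_2$ is faithful (any $g \in \Gamma_2$ centralizing $M_2$ lies in $M_2$ by maximality, i.e.\ Proposition \ref{Cha, Prop 4.1}; this passes to the profinite completion via density of $M_2$ in $\widehat{M}_2$); and (b) $\widehat{M}_2 \cong \widehat{\Z}^n$ is torsion-free.

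For the containment $A \subseteq \widehat{M}_2$, first note that $A \cap \widehat{M}_2$ has finite index in $\widehat{M}_2$, since the natural map $\widehat{M}_2 \hookrightarrow \widehat{\Gamma}_2 \twoheadrightarrow \widehat{\Gamma}_2/A$ has kernel $A \cap \widehat{M}_2$ and image in the finite group $\widehat{\Gamma}_2/A$ of order $|G_1|$. Next, the faithful action of $G_2$ on $\widehat{M}_2$ remains faithful on the finite-index subgroup $A \cap \widehat{M}_2$: if some $\bar g \in G_2$ acted trivially on $A \cap \widehat{M}_2$ but nontrivially on $\widehat{M}_2$, pick $m \in \widehat{M}_2$ with $(\bar g - 1)m \neq 0$ and $k > 0$ with $km \in A \cap \widehat{M}_2$; then $k(\bar g - 1)m = (\bar g - 1)(km) = 0$, contradicting torsion-freeness of $\widehat{M}_2$. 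Finally, for any $g \in A$, abelianness of $A$ forces $g$ to centralize $A \cap \widehat{M}_2$, so its image $\bar g \in G_2$ acts trivially on $A \cap \widehat{M}_2$; by the faithfulness just established, $\bar g = 1$, i.e.\ $g \in \widehat{M}_2$.

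I expect the main obstacle to be the clean extraction of this intrinsic characterization from Proposition \ref{Cha, Prop 4.1}, specifically the transfer of the self-centralizing property of $M_i$ in $\Gamma_i$ to the profinite completion; the key leverage is the torsion-freeness of $\widehat{M}_i \cong \widehat{\Z}^n$, which is exactly what guarantees that finite-index truncations of the $G_i$-action remain faithful.
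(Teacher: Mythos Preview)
The paper does not supply its own proof of this lemma; it merely cites \cite[Lemma~3.2]{Nery}. So there is nothing in-paper to compare against, and the relevant question is simply whether your argument is sound. It is.

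Your reduction to the equality $\psi(\widehat{M}_1)=\widehat{M}_2$ is the right move, and your proof of the containment $A:=\psi(\widehat{M}_1)\subseteq\widehat{M}_2$ is clean: the finite-index intersection $A\cap\widehat{M}_2$ carries a faithful $G_2$-action because $\widehat{M}_2\cong\widehat{\Z}^n$ is torsion-free, and any element of the abelian group $A$ centralizes that intersection, forcing its image in $G_2$ to be trivial. One cosmetic remark: your justification of fact~(a) via ``density of $M_2$ in $\widehat{M}_2$'' is slightly roundabout; it is simpler to observe that $M_2\subseteq\widehat{M}_2$ and that the $G_2$-action on $\widehat{M}_2$ restricts to the original faithful action on $M_2$, so any $\bar g\in G_2$ acting trivially on $\widehat{M}_2$ already acts trivially on $M_2$ and hence is trivial. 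Also, in step~5 you should perhaps make explicit that a uniform $k$ exists (take $k$ to be the exponent of the finite abelian group $\widehat{M}_2/(A\cap\widehat{M}_2)$), though this is routine. With these minor clarifications the argument is complete and is essentially the standard proof that the profinite closure of the translation lattice is characteristic in $\widehat{\Gamma}$.
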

	
	\begin{prop}\label{topological invariants}
		Let $\Gamma_1$ and $\Gamma_2$ be $n$-dimensional Bieberbach groups with the cyclic holonomy group $G$ of square-free order $\delta$ and maximal abelian normal subgroups $M$ and $N$, respectively. Then, $\widehat{\Gamma}_1\cong\widehat{\Gamma}_2$ if and only if
		\begin{enumerate}
			\item[(i)] $r(d,M)=r(d,N)$ for each $d\mid \delta$.
			\item[(ii)] $\rho_k(\lambda(s,t,M))=\rho_k(\lambda(s,t,N))$ for each $k=1,\cdots,v$ and $(s,t)\in D_{1}^{\ast}$.
		\end{enumerate}
	\end{prop}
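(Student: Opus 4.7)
The plan is to reduce profinite isomorphism of the Bieberbach groups to profinite isomorphism of their maximal abelian normal subgroups viewed as $\widehat{\Z}G$-modules, and then invoke Proposition \ref{profinite version of Th 4.13 of Opp26}, which characterizes the latter precisely by the invariants $r(d,\cdot)$ and $\rho_k(\lambda(s,t,\cdot))$.

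For the forward direction, suppose $\widehat{\Gamma}_1 \cong \widehat{\Gamma}_2$. Lemma \ref{diagram} furnishes isomorphisms $\phi:\widehat{M} \to \widehat{N}$ and $\varphi\in\mathrm{Aut}(G)$ with $\phi(g\cdot m)=\varphi(g)\cdot\phi(m)$; equivalently, $\widehat{M}\cong\widehat{N}^{\varphi}$ as $\widehat{\Z}G$-modules. Apply Proposition \ref{profinite version of Th 4.13 of Opp26} to this isomorphism to conclude $r(d,M)=r(d,N^{\varphi})$ and $\rho_k(\lambda(s,t,M))=\rho_k(\lambda(s,t,N^{\varphi}))$. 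It then remains to observe that these invariants are insensitive to the twist $N\mapsto N^{\varphi}$: writing $\varphi(g)=g^{k}$ with $(k,\delta)=1$, one sees that $\Phi_d(g^{k})$ annihilates $\mathcal{N}_d$ and no other cyclotomic component (because $\zeta_d^{k}$ is itself a primitive $d$-th root of unity while $\zeta_{d'}^{k}$ is a primitive $d'$-th root of unity for $d'\neq d$), so $N^{\varphi}$ has the same cyclotomic rank decomposition as $N$, and the analogous calculation applies to the matrix invariants $\rho_k$.

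For the backward direction, assume (i) and (ii). Invoke Lemma \ref{characterization-Bieberbach group with n(G)=1} to write $\Gamma_1=M_{n-1}\rtimes C_1$ and $\Gamma_2=N_{n-1}\rtimes C_2$ with $M=M_{n-1}\oplus \Z$ and $N=N_{n-1}\oplus\Z'$, the rank-one summands being trivial $\Z G$-modules with identical invariants. Proposition \ref{thm 7.4 of Opp26} (profinite direct-sum cancellation) then yields $r(d,M_{n-1})=r(d,N_{n-1})$ and the equality of the corresponding $\rho_k$'s, whence Proposition \ref{profinite version of Th 4.13 of Opp26} produces a $\widehat{\Z}G$-module isomorphism $f:\widehat{M}_{n-1}\to\widehat{N}_{n-1}$. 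Choose generators $c_1\in C_1$ and $c_2\in C_2$ that project to the same generator of $G$; the map $(m,c_1^{k})\mapsto(f(m),c_2^{k})$ then defines a group isomorphism $\widehat{\Gamma}_1=\widehat{M}_{n-1}\rtimes\widehat{C}_1 \to \widehat{N}_{n-1}\rtimes\widehat{C}_2=\widehat{\Gamma}_2$, since the two actions both factor through the same action of $G$ and the $G$-equivariance of $f$ is exactly what makes the map multiplicative on the semidirect products.

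The main technical hurdle I foresee is the verification, in the forward direction, that $r(d,\cdot)$ and $\rho_k(\lambda(s,t,\cdot))$ are preserved under the Galois twist $N\mapsto N^{\varphi}$; the rank part is a short calculation as sketched, but the argument for the $\rho_k$ requires tracing the twist through Oppenheim's description of $\mathrm{Ext}^{1}_{\Z G}(M_1,M_0)$ in Proposition \ref{Th 4.1 and Cor 4.8 of Opp26} and Lemma \ref{lem 4.4 of Opp26}. A secondary, more cosmetic point is the careful choice of topological generators of $\widehat{C}_1$ and $\widehat{C}_2$ in the backward direction so that the semidirect product isomorphism glues correctly.
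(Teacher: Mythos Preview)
Your proposal is correct and follows essentially the same route as the paper: Lemma~\ref{diagram} for the forward direction, then Proposition~\ref{profinite version of Th 4.13 of Opp26}; and Lemma~\ref{characterization-Bieberbach group with n(G)=1} plus Proposition~\ref{thm 7.4 of Opp26} (profinite cancellation) plus Proposition~\ref{profinite version of Th 4.13 of Opp26} for the converse, finishing by assembling the semidirect-product isomorphism.

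One remark on your flagged ``technical hurdle'': you do not need to trace the Galois twist through Oppenheim's description of $\mathrm{Ext}^1$. The lattices $N$ and $N^{\varphi}$ are semi-linearly isomorphic (via the identity map and $\varphi$), so Proposition~\ref{Invariants_Semi-Linear Iso} already gives $r(d,N)=r(d,N^{\varphi})$ and $\rho_k(\lambda(s,t,N))=\rho_k(\lambda(s,t,N^{\varphi}))$ for all relevant $d,k,(s,t)$; hence by Proposition~\ref{profinite version of Th 4.13 of Opp26} one has $\widehat{N}\cong\widehat{N^{\varphi}}$ as $\widehat{\Z}G$-modules, and your semi-linear isomorphism $\widehat{M}\cong\widehat{N}^{\varphi}$ upgrades to a genuine $\widehat{\Z}G$-isomorphism $\widehat{M}\cong\widehat{N}$. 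The paper in fact passes over this point silently, simply asserting that $\phi$ is a $\widehat{\Z}G$-module isomorphism; your version is the more scrupulous of the two.
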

	\begin{proof}
		Suppose $\widehat{\Gamma}_1\cong\widehat{\Gamma}_2$. By Lemma \ref{diagram}, there are isomorphisms $\phi:\widehat{M}_1\rightarrow \widehat{M}_2$ and $\varphi: G\rightarrow G$ such that the following diagram is commutative
		\begin{equation*}
			\begin{CD}
				1 @>{}>> \widehat{M}_1 @>{}>> \widehat{\Gamma}_1 @>{}>> G @>>> 1 \\
				@. @VV{\phi}V @VV{\psi}V @VV{\varphi}V\\
				1 @>{}>> \widehat{M}_2 @>{}>> \widehat{\Gamma}_{2} @>{}>> G @>>> 1.
			\end{CD}
		\end{equation*}
		Then, $\phi:\widehat{M}_1\rightarrow \widehat{M}_2$ is a $\widehat \Z G$-module isomorphism. Now, the statements (i) and (ii) follow immediately from Proposition \ref{profinite version of Th 4.13 of Opp26}.
		
		Conversely assume that $M$ and $N$ are maximal abelian normal subgroups of $\Gamma_1$ and $\Gamma_2$, respectively, satisfying the conditions (i) and (ii). Since $G$ is a cyclic group of square-free order $\delta$, it follows from Lemma \ref{characterization-Bieberbach group with n(G)=1} that $M=M_{n-1}\oplus \Z$ and $N=N_{n-1}\oplus \Z'$ such that $\Gamma_1=M_{n-1}\rtimes C_1$ and $\Gamma_2=N_{n-1}\rtimes C_2$ where $C_1, C_2$ contain $\Z$ and $\Z'$ as subgroups of index $\delta$ and acts on $M_{n-1}, N_{n-1}$ as $G$. Hence, $\widehat{M}=\widehat{M}_{n-1}\oplus\widehat{\Z}, \widehat{N}=\widehat{N}_{n-1}\oplus\widehat{\Z'}$ and $\widehat{\Gamma}_1=\widehat{M}_{n-1}\rtimes\widehat{C}_1, \widehat{\Gamma}_2=\widehat{N}_{n-1}\rtimes\widehat{C}_2$, where $\widehat{C}_1$ and $\widehat{C}_2$ act on $\widehat{M}_{n-1}$ and $\widehat{N}_{n-1}$ as $G$. By Proposition \ref{profinite version of Th 4.13 of Opp26},
		\begin{equation*}
			\widehat{M}_{n-1}\oplus\widehat{\Z} \cong \widehat{N}_{n-1}\oplus\widehat{\Z'}
		\end{equation*}
		as $\widehat{\Z}G$-modules and hence 
		\begin{equation*}
			r(d, M_{n-1})+r(d,\Z)=r(d,N_{n-1})+r(d,\Z')
		\end{equation*}
		and 
		\begin{equation*}
			\rho_k(\lambda(s,t,M_{n-1}))+\rho_k(\lambda(s,t,\Z))=\rho_k(\lambda(s,t,N_{n-1}))+\rho_k(\lambda(s,t,\Z'))
		\end{equation*}
		for each $d\mid \delta$ and $k=1,2,\cdots, v$ by Proposition \ref{thm 7.4 of Opp26}. As $\Z\cong \Z'$ as $\Z G$-modules, by Proposition \ref{Theorem 4.13 of Opp26}  we have 
		\begin{equation*}
			r(d,\Z)=r(d,\Z') \ \ \text{and}  \ \ \rho_k(\lambda(s,t,\Z))=\rho_k(\lambda(s,t,\Z'))
		\end{equation*} 
		for each $d\mid \delta$ and $k=1,2,\cdots,v$. Hence, 
		\begin{equation*}
			r(d,M_{n-1})=r(d,N_{n-1}) \ \ \text{and} \ \ \rho_k(\lambda(s,t,M_{n-1}))=\rho_k(\lambda(s,t,N_{n-1}))
		\end{equation*}
		for each  $k=1,2,\cdots,v$ and $d\mid \delta$, so that by Proposition \ref{profinite version of Th 4.13 of Opp26}, $\widehat{M}_{n-1}\cong\widehat{N}_{n-1}$ as $\widehat{\Z} G$-modules. Hence, $\widehat{\Gamma}_1\cong\widehat{\Gamma}_2$, and the proposition is proved.
	\end{proof}
	We finish this section with the Charlap's classification of Bieberbach groups with the holonomy group of prime order.

	\begin{defn}\label{exceptional Bieberbach group}
		A Bieberbach group $\Gamma$ with prime order holonomy group $C_p$ is exceptional if its maximal abelian normal subgroup $M$  is an exceptional $\Z C_p$-module.
	\end{defn}

	\begin{prop}[\cite{Cha86}, Chapter IV, Theorem 6.3]\label{C1-Bieberbach}
		There is a one-to-one correspondence between the isomorphism classes of non-exceptional Bieberbach groups whose holonomy group has prime order $p$ and $4$-tuples $(a,b,c; \theta)$ where $a,b,c\in \Z$  with $a> 0, \ b\geq 0, \ c\geq 0, \ (a,c)\neq (1,0), \ (b,c)\neq (0,0)$ and $\theta \in\mathrm{Gal}(\zeta_p)\backslash H(\Q(\zeta_p))$. 
	\end{prop}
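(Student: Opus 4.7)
The plan is to combine Diederichsen's structure theorem for $\Z C_p$-lattices (which appears here as the special case $\delta=p$ of Proposition \ref{Theorem 4.13 of Opp26}) with the cohomological classification of Bieberbach extensions from Propositions \ref{Theorem of Mathematical Crystallography} and \ref{Th 2.1, CIII, Cha86}, specialized via Lemma \ref{acts transit} to the non-exceptional case. The triple $(a,b,c)$ encodes the $\Z C_p$-decomposition $M\cong\Z^{a}\oplus I_1\oplus\cdots\oplus I_b\oplus M_{J_1}\oplus\cdots\oplus M_{J_c}$, and $\theta$ encodes the Galois orbit of the product ideal class.

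First, I would translate Oppenheim's abstract invariants of Proposition \ref{Theorem 4.13 of Opp26} (for $\delta=p$, so that $D_0=\{1\}$, $D_1=\{p\}$, and $D_1^{\ast}=\{(p,1)\}$) into the Diederichsen triple: the identifications $r(1)=a+c$, $r(p)=b+c$, and $\rho_1(\lambda(p,1))=c$ show that $(a,b,c)$ together with the ideal class $[I_{\mathcal{M}_p}]\in H(\Q(\zeta_p))$ form a full set of isomorphism invariants for $M$ as a $\Z C_p$-lattice. The positivity constraints then arise as follows: faithfulness of the $C_p$-action (forced because $M$ is the translation subgroup of a Bieberbach group) requires $r(p)>0$, i.e., $(b,c)\neq(0,0)$; Lemma \ref{characterization-Bieberbach group with n(G)=1} guarantees at least one trivial summand, so $a\geq 1$; and $(a,c)\neq(1,0)$ is precisely the negation of the exceptional condition in Definition \ref{exceptional module}.

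Next I would pass from $\Z C_p$-lattice isomorphism to arithmetic crystal classes: by Lemma \ref{Invariants_crystal class} applied with $\delta=p$, two such lattices $M$ and $N$ carry arithmetically equivalent crystal classes iff they share $(a,b,c)$ and their ideal classes lie in the same $\mathrm{Gal}(\zeta_p)$-orbit. Thus the arithmetic crystal classes that can support a Bieberbach group are parametrized precisely by the $4$-tuples $(a,b,c;\theta)$ with $\theta\in\mathrm{Gal}(\zeta_p)\backslash H(\Q(\zeta_p))$ subject to the constraints above. By Remark \ref{remark-Independence of the CC}, it suffices to count Bieberbach groups in one representative crystal class per tuple.

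Finally, within a fixed non-exceptional crystal class $(C_p,M)$, Proposition \ref{Theorem of Mathematical Crystallography} identifies isomorphism classes of Bieberbach groups with $\mathcal{N}_{\mathrm{Aut}(M)}(C_p)$-orbits on $X(C_p,M)$; and by the torsion-free criterion of Proposition \ref{Th 2.1, CIII, Cha86}, $X(C_p,M)=H^2(C_p,M)^{\ast}\cong\bar M_1^{\ast}$. Here Lemma \ref{acts transit} is decisive: in the non-exceptional case the action of $\mathcal{N}_{\mathrm{Aut}(M)}(C_p)$ on $\bar M_1^{\ast}$ is transitive, so each tuple $(a,b,c;\theta)$ determines exactly one isomorphism class of Bieberbach groups, yielding the claimed bijection. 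The main obstacle is exactly this transitivity statement in the non-exceptional regime; everything else is assembling the invariants from Oppenheim and the arithmetic equivalence from Lemma \ref{Invariants_crystal class}. Since Lemma \ref{acts transit} is already at our disposal (it was proved earlier using Charlap's Theorem 6.1 of Chapter IV), the proof reduces to the straightforward bookkeeping sketched above.
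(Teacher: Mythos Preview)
The paper does not supply its own proof of this proposition; it is quoted from \cite{Cha86} as an external input, to be used later in the proof of Theorem~\ref{cardinality of the crystal class}. Your reconstruction from the machinery already assembled in the paper is correct: the identifications $r(1)=a+c$, $r(p)=b+c$, $\rho_1(\lambda(p,1))=c$ recover the Diederichsen--Reiner triple from Oppenheim's invariants specialized to $\delta=p$; the constraints $a>0$, $(b,c)\neq(0,0)$, $(a,c)\neq(1,0)$ are properly justified via Lemma~\ref{characterization-Bieberbach group with n(G)=1}, faithfulness, and Definition~\ref{exceptional module}; and Lemma~\ref{Invariants_crystal class} together with Proposition~\ref{Theorem of Mathematical Crystallography} and the transitivity of Lemma~\ref{acts transit} collapse each admissible arithmetic crystal class to a single isomorphism class of Bieberbach groups, giving the bijection. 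One remark worth making: the non-exceptional case of Lemma~\ref{acts transit} is itself established in the paper by citing Charlap's Theorem~6.1 of the same chapter, so your argument is in effect Charlap's own proof of Theorem~6.3 reassembled from the pieces the paper has already imported---logically sound and not circular (6.1 precedes and feeds into 6.3 in Charlap), but not an independent route.
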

	
	Note that $\mathrm{Gal}(\zeta_p)$ is cyclic of even order if $p>2$; hence $\mathrm{Gal}(\zeta_p)$ has a subgroup $C_2$ of order $2$ if $p>2$. 
	
	\begin{prop}[\cite{Cha86}, Chapter IV, Theorem 6.4]\label{C2-Bieberbach}
		There is a one-to-one correspondence between the isomorphism classes of exceptional Bieberbach groups whose holonomy group has prime order $p$ and pairs $(b,\theta)$ where $b>0$,  and $\theta \in C_2\backslash H(\Q(\zeta_p))$. 
	\end{prop}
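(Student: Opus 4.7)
The plan is to establish an explicit bijection between isomorphism classes of exceptional Bieberbach groups with holonomy $C_p$ and pairs $(b,\theta)$ with $b>0$ and $\theta\in C_2\backslash H(\Q(\zeta_p))$, by parametrising these groups by triples $(b,\theta,\alpha)$ and then quotienting by the action induced from Bieberbach-group isomorphism.

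First, I would invoke Definition \ref{exceptional Bieberbach group} and the Steinitz theorem for $\Z[\zeta_p]$-modules to write the maximal abelian normal subgroup as $M\cong A_1\oplus\cdots\oplus A_b\oplus\Z$ with $b>0$ and $A_i$ fractional ideals of $\Z[\zeta_p]$, whose $\Z C_p$-isomorphism class is determined by $b$ and the Steinitz class $\theta:=[A_1\cdots A_b]\in H(\Q(\zeta_p))$. Since each $A_i$ is $C_p$-cohomologically trivial, $H^2(C_p,M)\cong H^2(C_p,\Z)\cong\F_p$ and Proposition \ref{Th 2.1, CIII, Cha86} identifies the set of special elements as $X(C_p,M)=\F_p^*$, so the iso class of $\Gamma$ is encoded by a triple $(b,\theta,\alpha)$ with $\alpha\in\F_p^*$. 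By Lemma \ref{Iso B gropus implies iso sl iso} and Proposition \ref{Invariants_Semi-Linear Iso}, any Bieberbach-group isomorphism descends to a semi-linear isomorphism of modules with twist $\varphi\in\mathrm{Gal}(\zeta_p)=\F_p^*$ sending $\theta\mapsto\varphi(\theta)$; such an iso induces on cohomology the action of $\mathrm{Aut}(C_p)$ on $H^2(C_p,\Z)$, namely multiplication by $\varphi^{-1}$. Combining this with Proposition \ref{Theorem of Mathematical Crystallography} and the structure $\mathrm{Aut}_{\Z C_p}(M)=\{\pm1\}\times\mathrm{Aut}_{\Z[\zeta_p]}(A_1\oplus\cdots\oplus A_b)$, whose second factor acts trivially on $H^2(C_p,\Z)$, I would conclude that the full equivalence on triples is generated by
\[
(\theta,\alpha)\sim(\varphi(\theta),\varphi^{-1}\alpha)\ \ (\varphi\in\F_p^*)\qquad\text{and}\qquad(\theta,\alpha)\sim(\theta,-\alpha).
\]

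Finally, applying the twist $\varphi=\alpha$ I would normalise every class to $(\theta,1)$; the only remaining identification is then $(\theta,1)\sim(\theta,-1)\sim(\bar\theta,1)$, where the second step re-normalises via $\varphi=-1$. Since $\langle-1\rangle=C_2$ is the unique subgroup of order $2$ in $\mathrm{Gal}(\zeta_p)$ and acts on $H(\Q(\zeta_p))$ as complex conjugation, the orbit space is exactly $\N_{>0}\times C_2\backslash H(\Q(\zeta_p))$, yielding the stated bijection. Surjectivity of the parametrisation is handled by building the Bieberbach group explicitly from any such triple via the cocycle construction of \eqref{exact sequence} and Proposition \ref{Th 2.1, CIII, Cha86}. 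The main obstacle I anticipate is pinning down the precise sign and inversion conventions in the action of a semi-linear module isomorphism on $H^2(C_p,\Z)\cong\F_p$, and verifying the decomposition of $\mathrm{Aut}_{\Z C_p}(M)$ so that the ideal-summand automorphisms act trivially on cohomology; once these technicalities are settled the combinatorial reduction is straightforward.
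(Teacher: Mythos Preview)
The paper does not give its own proof of this proposition: it is quoted verbatim from Charlap \cite[Chapter~IV, Theorem~6.4]{Cha86} and used as a black box, so there is nothing in the paper to compare your argument against directly.

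That said, your outline is essentially the argument Charlap gives, and it is correct. A few remarks on the points you flagged. The decomposition $\mathrm{Aut}_{\Z C_p}(M)\cong\mathrm{Aut}_{\Z[\zeta_p]}\bigl(\bigoplus_i A_i\bigr)\times\{\pm1\}$ holds because $\mathrm{Hom}_{\Z C_p}(\Z,A_i)=A_i^{C_p}=0$ and $\mathrm{Hom}_{\Z C_p}(A_i,\Z)=0$ (the latter since $(\zeta_p-1)A_i$ has finite index in $A_i$ and $\Z$ is torsion-free), so the two summands admit no nontrivial cross-maps. For the action on cohomology, the paper's own computation in the proof of Lemma~\ref{acts transit} (using the action \eqref{def of the action}) shows that a semi-linear automorphism with twist $\varphi(x)=x^{k}$ and $f|_{\Z}=\mathrm{id}$ acts on $H^{2}(C_p,\Z)\cong\F_p$ as multiplication by $k$, not $k^{-1}$; so your relation~(a) should read $(\theta,\alpha)\sim(\varphi(\theta),\varphi\,\alpha)$. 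This does not affect the orbit count, since replacing $\varphi$ by $\varphi^{-1}$ gives the same equivalence relation. One genuine subtlety worth making explicit: for a \emph{fixed} module $M_\theta$, the normalizer $\mathcal{N}_{\mathrm{Aut}(M_\theta)}(C_p)$ surjects only onto $\mathrm{Stab}_{\mathrm{Gal}(\zeta_p)}(\theta)\subset\F_p^{*}$, not onto all of $\F_p^{*}$. Your relation~(a) for arbitrary $\varphi$ therefore does not come from Proposition~\ref{Theorem of Mathematical Crystallography} alone, but from transporting between \emph{different} crystal classes via Lemma~\ref{Independence of the CC}; you use this implicitly, and once it is stated the combinatorial reduction to $C_2\backslash H(\Q(\zeta_p))$ goes through exactly as you describe.
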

	
	\subsection{Proofs of main results} 
	
	\begin{proof}[Proof of Theorem \ref{cardinality of the crystal class}]
		Let $\Gamma_1$ and $\Gamma_2$ be $n$-dimensional Bieberbach groups  with the cyclic holonomy group of square-free order and maximal abelian normal subgroups $M$ and $N$, respectively. Suppose $\widehat{\Gamma}_1\cong\widehat{\Gamma}_2$. By Lemma \ref{diagram}, we can assume that $\Gamma_1$ and $\Gamma_2$ have the same cyclic holonomy group $G$ of square-free order $\delta$. 
		
		For each $d\mid \delta$, let $C_d$ denote the subgroup of $G$ of order $d$ and let $\Gamma_{i,d}$ denote the $n$-dimensional Bieberbach subgroup of $\Gamma_i$ with the holonomy group $C_d$, for  $i=1,2$. Combining Propositions \ref{topological invariants} and  \ref{profinite version of Th 4.13 of Opp26}, we have
		\begin{eqnarray}\label{equivalence 1}
			\widehat{\Gamma}_1\cong\widehat{\Gamma}_2  &\Leftrightarrow&  \widehat{M}\cong \widehat{N} \ \text{as} \ \nonumber \widehat{\Z}G\text{-modules} \\
			&\Leftrightarrow& \text{for all} \ d\mid \delta, \widehat{M}\cong \widehat{N} \  \text{as} \ \widehat{\Z}C_d\text{-modules} \\
			&\Leftrightarrow&  \text{for all} \ d\mid \delta, \widehat{\Gamma}_{1,d}\cong \widehat{\Gamma}_{2,d}. \nonumber
		\end{eqnarray}
		
		Suppose that there are prime numbers $p$ and $q$ dividing $\delta$ such that  the Bieberbach groups $\Gamma_{1,p}$ and $\Gamma_{2,q}$ are exceptional. Since $\widehat{\Gamma}_1\cong\widehat{\Gamma}_2$, it follows from \eqref{equivalence 1} that $\widehat{\Gamma}_{1,d}\cong\widehat{\Gamma}_{2,d}$ for each $d\mid \delta$. In particular, $\widehat{\Gamma}_{1,p}\cong\widehat{\Gamma}_{2,p}$ and $\widehat{\Gamma}_{1,q}\cong\widehat{\Gamma}_{2,q}$. This implies that, if $p\neq q$, then the Bieberbach groups $\Gamma_{i,p}$ and $\Gamma_{i,q}$ are exceptional, for $i=1,2$. Thus, as $\delta=p_1p_2\cdots p_k$ where $p_l \ (l=1,\cdots,k)$ are distinct prime numbers, we can assume that there is a positive integer $r$ with $1\leq r\leq k$ such that $D=\{q_1,\cdots,q_r\}$ is a subset of $\{p_1,\cdots,p_k\}$ with $r$ distinct  elements, so that the Bieberbach groups $\Gamma_{i,q}$ are exceptional, for each $q\in D$ and $i=1,2$.
		
		We conclude from  Proposition \ref{Cor 27 of DF04} that the Galois group $\mathrm{Gal}(\zeta_\delta)$ has a subgroup $\mathcal{H}_D$ that has $\Z/2\Z$ instead of $\mathrm{Gal}(\zeta_{q})$ as a factor in the direct product for all $q\in D$. Since $\mathrm{Gal}(\zeta_\delta)\cong (\Z/\delta\Z)^{\times}\cong\mathrm{Aut}(G)$, we have $\mathrm{Aut}(G)$ contains an isomorphic copy of $\mathcal{H}_D$, which we will also denote by $\mathcal{H}_D$.
		
		Now, if $[(G,M)]$ denotes the arithmetic crystal class of $\Gamma_1$, then, by Lemma \ref{Invariants_crystal class} and Proposition \ref{Invariants_Semi-Linear Iso}, we have
		\begin{eqnarray*}
			\Gamma_1,\Gamma_2 \in [(G,M)]&\Leftrightarrow& M\cong(N)^{\varphi} \ \text{as} \ \Z G\text{-modules for some} \ \varphi\in \mathcal{H}_D \\
			&\Leftrightarrow& \text{for all} \ d\mid \delta, M\cong(N)^{\varphi} \ \text{as} \ \Z C_d\text{-modules for some} \  \varphi\in \mathcal{H}_D \\
			&\Leftrightarrow& \text{for all} \ d\mid \delta, \Gamma_{1,d},\Gamma_{2,d}\in [(C_d,M)].
		\end{eqnarray*}
		Since the Bieberbach groups $\Gamma_{i,q} \ (q\in D, i=1,2)$ are exceptional, it follows from Propositions \ref{topological invariants} and \ref{C2-Bieberbach} together with  Lemma \ref{Invariants_crystal class} that		
		$$|\mathcal{C}(M)|= \left|\mathcal{H}_D \backslash \prod_{d\mid \delta}H(\Q(\zeta_d))\right|. $$ 
		
		On the contrary, if for each prime $p$ dividing  $\delta$ the Bieberbach groups $\Gamma_{i,p} \ (i=1,2)$ are not exceptional, then by Propositions \ref{topological invariants} and \ref{C1-Bieberbach} together with  Lemma \ref{Invariants_crystal class}, we have 
		$$|\mathcal{C}(M)|= \left|\mathrm{Gal}(\zeta_\delta)\backslash \prod_{d\mid \delta}H(\Q(\zeta_d))\right|, $$ and therefore the Theorem \ref{cardinality of the crystal class} is proved.
		
	\end{proof}
	
	In particular, using Proposition \ref{abstract invariants} and similar arguments as in the proof of Theorem \ref{cardinality of the crystal class}, we get the following generalization of Theorem 1.1 of \cite{Nery}.
	
	\begin{theorem}\label{main Theorem2}
		Let $\Gamma, \Gamma_d$ and $\mathcal{H}_D$ be as in  Theorem \ref{cardinality of the crystal class}. If $\delta=6,10,14,$ or $\delta$ is prime, then
		\begin{equation*}
			|\mathfrak{g}(\Gamma)|= \left|\mathcal{H}_D \backslash \prod_{d\mid \delta}H(\Q(\zeta_d))\right|, 
		\end{equation*}
		for the special case.  Otherwise,
		\begin{equation*}
			|\mathfrak{g}(\Gamma)|= \left|\mathrm{Gal}(\zeta_\delta) \backslash \prod_{d\mid \delta}H(\Q(\zeta_d))\right|. 
		\end{equation*}	
	\end{theorem}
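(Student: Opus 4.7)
The strategy is to apply Proposition \ref{topological invariants} and Proposition \ref{abstract invariants} in tandem and then read the count off Theorem \ref{cardinality of the crystal class}. The hypothesis $\delta\in\{6,10,14\}$ or $\delta$ prime is exactly the range in which Proposition \ref{abstract invariants} applies, because its proof relies on the direct-sum cancellation of Proposition \ref{Prop 5.1 of Wie84}; this is what promotes semi-linear isomorphism of $M$ and $N$ to an honest isomorphism of the corresponding Bieberbach groups.

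First I would take two Bieberbach groups $\Gamma_1,\Gamma_2$ with cyclic holonomy $G$ of order $\delta$ and maximal abelian normal subgroups $M,N$, representing arbitrary elements of $\mathfrak{g}(\Gamma)$; by Lemma \ref{diagram} we may assume they share the same holonomy group $G$. Proposition \ref{topological invariants} then characterizes $\widehat\Gamma_1\cong\widehat\Gamma_2$ by the discrete invariants (i) and (ii) of $M,N$ agreeing, while Proposition \ref{abstract invariants}, under the standing hypothesis on $\delta$, characterizes $\Gamma_1\cong\Gamma_2$ by conditions (i), (ii) together with the ideal-class condition (iii). Consequently, within $\mathfrak{g}(\Gamma)$ the isomorphism classes of Bieberbach groups are classified by the tuples $([I_{\mathcal M_d}])_{d\mid\delta}\in\prod_{d\mid\delta}H(\Q(\zeta_d))$ up to the $\mathrm{Gal}(\zeta_\delta)$-orbit relation of (iii), with the discrete invariants held fixed.

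It remains to count these orbits. This is exactly the computation performed in the proof of Theorem \ref{cardinality of the crystal class}: in the non-special case the quotient is by the full Galois group $\mathrm{Gal}(\zeta_\delta)$, whereas in the special case (some $\Gamma_{i,p}$ is exceptional in the sense of Definition \ref{exceptional Bieberbach group}) Proposition \ref{C2-Bieberbach} forces a reduction from $\mathrm{Gal}(\zeta_p)$ to its unique subgroup $C_2$ at each $p\in D$, so the effective acting group becomes $\mathcal H_D$. Hence $|\mathfrak{g}(\Gamma)|=|\mathcal C(M)|$, and Theorem \ref{cardinality of the crystal class} supplies the two stated formulas.

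The main obstacle I expect is a bookkeeping one: one must check that every admissible Galois orbit of ideal-class data, once paired with the prescribed discrete invariants, is actually realized by some Bieberbach group in $\mathfrak{g}(\Gamma)$ rather than merely by a $\Z G$-lattice. This is guaranteed by Lemma \ref{characterization-Bieberbach group with n(G)=1}, which realizes any such lattice $N=N_{n-1}\oplus\Z$ as the semidirect product $N_{n-1}\rtimes C$, and by Proposition \ref{thm 7.4 of Opp26}, whose cancellation property confirms that the constructed group has the correct profinite completion and therefore lies in the prescribed genus.
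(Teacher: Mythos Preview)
Your proposal is correct and follows essentially the same route as the paper, which simply says the result follows ``using Proposition~\ref{abstract invariants} and similar arguments as in the proof of Theorem~\ref{cardinality of the crystal class}.'' You have spelled out precisely this: Proposition~\ref{topological invariants} fixes the discrete invariants (i)--(ii) across the genus, Proposition~\ref{abstract invariants} (valid exactly for $\delta\in\{6,10,14\}$ or prime via Proposition~\ref{Prop 5.1 of Wie84}) reduces isomorphism of Bieberbach groups to the Galois-orbit condition (iii), and the resulting count coincides with $|\mathcal{C}(M)|$ as computed in Theorem~\ref{cardinality of the crystal class}.
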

	
	Since the action of the Galois group on ideal class group is transitive if and only if the class group is trivial, we have 
	
	\begin{cor}
		Let $\Gamma$ be an $n$-dimensional Bieberbach group with the cyclic holonomy group of  order $\delta=6,10,14,$ or $\delta$ is prime. Then, $|\mathfrak{g}(\Gamma)|=1$ if and only if for each integer $d$ dividing $\delta$ the class group $H(\Q(\zeta_d))$ is trivial.
	\end{cor}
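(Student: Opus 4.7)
The plan is to deduce the corollary as an immediate consequence of Theorem \ref{main Theorem2}. By that theorem, in both the special and non-special case we have
\[
|\mathfrak{g}(\Gamma)| = \left|\mathcal{H}\backslash \prod_{d\mid \delta}H(\Q(\zeta_d))\right|,
\]
where $\mathcal{H}$ is either $\mathrm{Gal}(\zeta_\delta)$ or its subgroup $\mathcal{H}_D$. So proving the corollary reduces to characterizing when this orbit space has exactly one element.

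The key observation I would use is that the action of $\mathrm{Gal}(\zeta_\delta)$ on each $H(\Q(\zeta_d))$ is by \emph{group automorphisms}, via restriction of Galois elements to the subfield $\Q(\zeta_d) \subseteq \Q(\zeta_\delta)$ (this is the content of Lemma \ref{Exercise 6.2} and the remark following it). Consequently the action on the direct product is coordinate-wise by group automorphisms, and the trivial tuple $(1,1,\ldots,1)$ is a fixed point of the action of $\mathcal{H}$.

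From this the corollary is immediate in both directions. If every $H(\Q(\zeta_d))$ is trivial, then the product itself is trivial, so the orbit space has a single element and $|\mathfrak{g}(\Gamma)|=1$. Conversely, if $|\mathfrak{g}(\Gamma)|=1$, then the orbit space is a singleton; since the identity tuple is a fixed point, its orbit is $\{(1,\ldots,1)\}$, which forces the whole product $\prod_{d\mid \delta}H(\Q(\zeta_d))$ to consist of the identity alone. Hence each factor $H(\Q(\zeta_d))$ is trivial, as claimed. I do not anticipate any substantive obstacle; the only subtlety is verifying that the action on the product is coordinate-wise by automorphisms, which is precisely what the preliminaries in Subsection~2.2 establish.
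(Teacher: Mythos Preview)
Your proposal is correct and follows essentially the same route as the paper. The paper's proof is the single sentence ``Since the action of the Galois group on ideal class group is transitive if and only if the class group is trivial,'' and your fixed-point argument (the identity tuple is preserved by any action via group automorphisms) is precisely the standard justification for that claim.
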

	
	We can now prove our main theorem.
	
	\begin{proof}[Proof of Theorem \ref{main Theorem}]
		Let $\Gamma$ be an $n$-dimensional Bieberbach group with maximal abelian normal subgroup $M$ and cyclic holonomy group $G$ of square-free order $\delta$. It follows from  Remark \ref{remark-Independence of the CC} that to find all the isomorphism classes of Bieberbach groups of the  arithmetic crystal classes that corresponds to the $\Z G$-lattices in $\mathcal{C}(M)$, it is sufficient to consider a set $T$ of representatives for  the isomorphism classes of $\Z G$-lattices in $\mathcal{C}(M)$. Theorem \ref{cardinality of the crystal class} gives us a formula for the cardinality of $T$. Now, applying first Proposition \ref{Theorem of Mathematical Crystallography} and then Lemma \ref{orbit of N_{GL}(G)}, we have
		\begin{eqnarray*}
			|\mathfrak{g}(\Gamma)|&=&  \sum_{M\in T} |\mathcal{N}_{\mathrm{Aut}(M)}(G)\backslash X(G,M)| \\
			&=& \sum_{M\in T}\left(\prod_{p\mid \delta}|\mathcal{N}_{\mathrm{Aut}(M)}(G)\backslash (\bar{M}_{1,p}^{\ast})^{G}| \right),	
		\end{eqnarray*}
		where $M_{1,p}$ is the largest direct summand of $M$ on which $C_p$ acts trivially.
		
	\end{proof}
	\begin{proof}[Proof of Corollary \ref{cor1}]
		To simplify notation, we let $max\{ |(\bar{M}_{1,p}^{\ast})^{G}| \}$ stand for $max \{ |(\bar{M}_{1,p}^{\ast})^{G}| \  :\ p\mid \delta\}$. By Theorem \ref{main Theorem}, 
		\begin{eqnarray*}
			|\mathfrak{g}(\Gamma)|&=&\sum_{M\in T} \left(  \prod_{p\mid \delta}\left| \mathcal{N}_{\mathrm{Aut}(M)}(G)\backslash (( \bar{M}_{1,p} )^{\ast})^{G} \right|  \right) \\
			&\leq& \sum_{M\in T}\left( \prod_{p\mid\delta} max\{ |(\bar{M}_{1,p}^{\ast})^{G}|\} \right) \\
			&=& \sum_{M\in T} \left(  max\{ |(\bar{M}_{1,p}^{\ast})^{G}| \}  \right)^{b} \\
			&=& |\mathcal{C}(M)| \left(  max\{ |(\bar{M}_{1,p}^{\ast})^{G}| \} \right)^{b}\\
			&\leq& \left( H(\Q(\zeta_{\delta}))\right)^a  \left( max\{ |(\bar{M}_{1,p}^{\ast})^{G}| \} \right)^{b},	\ \  ( \text{by Theorem \ref{cardinality of the crystal class}})
		\end{eqnarray*}
		where $a$ is the number of divisors of $\delta$ and $b$ is the number of prime divisors of $\delta$.
	\end{proof}
	
	\begin{proof}[Proof of Corollary \ref{cor2}]
		This follows from Theorem \ref{main Theorem} together with Lemma \ref{characterization-Bieberbach group with n(G)=1}.
	\end{proof}
	
	\begin{proof}[Proof of Corollary \ref{cor3}]
		This is a consequence of Theorem \ref{main Theorem} together with Lemma \ref{acts transit}.
	\end{proof}

	\subsection{Examples}

	Suppose that $G$ is a cyclic group of square-free order $\delta >1$. Let $\langle a,b \ |\ R \rangle$ be a presentation of  $G$, i.e., $G\cong F_2/\widetilde{R}$ where $F_2$ is the free group on $\{a,b\}$ and $\widetilde{R}$ is the normal closure of $R$ in $F_2$. This defines an exact sequence 
	\begin{equation}\label{seq3}
		1\rightarrow \widetilde{R}\rightarrow F_2\rightarrow G\rightarrow 1.
	\end{equation}
	Hence, if $[\widetilde{R},\widetilde{R}]$ denotes the commutator subgroup of $\widetilde{R}$, then \eqref{seq3} induces  the exact sequence
	\begin{equation*} 
		1\rightarrow M\rightarrow \Gamma\rightarrow G\rightarrow 1,
	\end{equation*} 
	where $M=\widetilde{R}/[\widetilde{R},\widetilde{R}]$ and $\Gamma=F_{2}/[\widetilde{R},\widetilde{R}]$. Note that $M$ is a free abelian group whose rank is given by the Schrier's formula $(\mathrm{rank}(F_2)-1)\delta+1=\delta+1$  (see \cite[Chapter 6, Proposition 2]{Joh97}). Moreover, we claim that $M$ is maximal abelian in $\Gamma$. Suppose the claim were false. Then we could find $x\in F_2$ such that the image of $x$ in $G$ is non-trivial, so that $[x,r]\in [\widetilde{R},\widetilde{R}]$ for all $r\in \widetilde{R}$. Thus, since $G$ is a cyclic group, we have $M$ is in the center of $\Gamma$. Hence, $\Gamma$ is abelian and $[F_2,F_2]=[\widetilde{R},\widetilde{R}]$. As $M$ has finite index in $\Gamma$, we have $\mathrm{rank}(\widetilde{R})=\mathrm{rank}(F_2)$, and hence $|G|=1$, a contradiction. Then, $\Gamma$ is an $(\delta+1)$-dimensional Bieberbach group whose holonomy group is $G$. By \cite[Corollary 1]{Ten97}, $M=\Z G\oplus \Z$ where $\Z$ is a trivial $\Z G$-module.Thus, by Theorem \ref{main Theorem},
	\begin{equation*}
		|\mathfrak{g}(\Gamma)|=	\sum_{M\in T}\left(\prod_{p\mid \delta}|\mathcal{N}_{\mathrm{Aut}(M)}(G)\backslash \mathbb{F}_p^{\ast}| \right),
	\end{equation*}
	since $\Z$ is the largest direct summand of $M$ on which $C_p$ acts trivially for each prime $p\mid \delta$. Since $\Z G$ is a permutation $\Z G$-module (i.e., the $\Z$-basis of $\Z G$ is fixed under the action of $G$), we have $G \leq \mathrm{Sym}(G)\leq\mathrm{GL}(\delta+1,\Z)\cong \mathrm{Aut}(M)$, where $\mathrm{Sym}(G)$ is the group of all permutations of the elements of $G$. Hence, $\mathcal{N}_{\mathrm{Aut}(M)}(G)$ contains the holomorph of $G$  by \cite[Theorem 6.3.2]{Hall59}, i.e., the group $\mathrm{Hol}(G)= G\rtimes \mathrm{Aut}(G)$. As  $\mathrm{Aut}(C_p)\leq \mathrm{Aut}(G)$ for each prime $p\mid \delta$, we have $\mathcal{N}_{\mathrm{Aut}(M)}(G)$ acts transitively on $\mathbb{F}_{p}^{\ast}$ for each prime  $p\mid \delta$. Thus, 
	\begin{equation*}
		|\mathfrak{g}(\Gamma)|=\left|\mathrm{Gal}(\zeta_{d})\backslash \prod_{d\mid \delta} H(\Q(\zeta_d))  \right|, 
	\end{equation*}
	by Theorem \ref{cardinality of the crystal class}. 
	
	In particular, if $\delta=6, 10, 14, 15, 21$ or $\delta$ is prime $\leq 19$, for example,  then $|\mathfrak{g}(\Gamma)|=1$ because for each $d \mid \delta$ the class group $H(\Q(\zeta_{d}))$ is trivial (see \cite[Table 10]{AW04}).
	
	Now, if $\delta=46, 55, 105$ or $\delta$ is prime $> 19$, for example, then $|\mathfrak{g}(\Gamma)|>1$ because $23\mid 46$ and $|H(\Q(\zeta_{46})|\geq 3$; $|H(\Q(\zeta_{55})|\geq 10$; $|H(\Q(\zeta_{105})|\geq 13$; and $|H(\Q(\zeta_{p})|> 1$ for all prime $p> 19$ (see \cite[Theorem 11.1 and p. 353]{Was82}).

	\begin{remark}
		In general, calculating the normalizer of a finite subgroup in $\mathrm{GL}(n,\Z)$ is not easy. We refer the interested reader to \cite{BNZ73}, which presents a method for calculating such normalizers.
	\end{remark}

	\section*{Acknowledgements}
	The author wishes to express his thanks to Prof. Dr. Pavel Zalesskii for his many valuable advices and several discussions.

\end{document}